\theoremstyle{plain}
\newtheorem{lemma}{Lemma}[section]
\newtheorem{theorem}[lemma]{Theorem}
\newtheorem{corollary}[lemma]{Corollary}
\newtheorem{proposition}[lemma]{Proposition}
\newtheorem{definition}[lemma]{Definition}
\newtheorem{remark}[lemma]{Remark}
\newtheorem{example}[lemma]{Example}
\renewcommand{\div}{{\mathrm{div}}}
\renewcommand{\dim}{{\mathrm{dim}}}
\newcommand{\abs}[1]{\lvert #1\lvert}
\newcommand{\ddc}{{\mathrm{dd^{c}}}}
\newcommand{\supp}{{\mathrm{supp}}}
\newcommand{\rank}{{\mathrm{rank}}}
\begin{document}

\title[Holomorphic mappings into compact complex manifolds]{Holomorphic mappings into compact complex manifolds}

\thanks{The research of the authors is supported by an NAFOSTED grant of Vietnam (Grant No. 101.01-2011.29).}

\author{Do Duc Thai and Vu Duc Viet}

\address{Department of Mathematics \newline
Hanoi National University of Education\newline
136 XuanThuy str., Hanoi, Vietnam}
\email{ducthai.do@gmail.com}
\email{vuvietsp@gmail.com}


\maketitle       
\begin{abstract}   The purpose of this article is to show a second main theorem with  the explicit truncation level for holomorphic 
mappings of $ \mathbb{C} $ (or of a compact Riemann surface) into a compact complex manifold sharing divisors  in subgeneral position.
\end{abstract}
\tableofcontents
\section{Introduction and main results}

Let $\{H_j\}_{j=1}^q$ be hyperplanes of $\mathbb{C}P^{n}$.
Denote by $Q$  the index set $\{1,2, \cdots, q\}$.
Let $N\geq n$ and $q\geq N+1$.
We say that the family $\{H_j\}_{j=1}^q$ are in
$N$-\textit{subgeneral position} 
if for every subset $R\subset Q$ with the cardinality $|R|=N+1$
$$\bigcap_{j\in R} H_j =\emptyset.  $$
If they are in $n$-subgeneral position, we simply say that they are in {\it general position}.

Let $f:  \mathbb{C}^m \to \mathbb{C}P^{n}$ be a linearly nondegenerate meromorphic mapping and $\{H_j\}_{j=1}^q$ 
be hyperplanes in $N$-subgeneral position in $\mathbb{C}P^{n}$. 
Then the Cartan-Nochka's second main theorem
(see \cite{Noc83}, \cite{No05}) stated that 
$$
||\ \ (q-2N+n-1)T(r,f) \leq
\sum_{i=1}^q N^{[n]}(r, \div (f,H_i))+ o(T(r,f)).
$$

The above Cartan-Nochka's second main theorem plays an extremely important role in Nevanlinna theory, with many applications 
to Algebraic or Analytic geometry. Over the last few decades, there have been several results generalizing this theorem to abstract 
objects. The theory of the second main theorems for algebraically nondegenerate holomorphic curves into an arbitrary nonsingular 
complex projective variety $V$ sharing curvilinear divisors in general position in $V$ began about 40 years ago and  has grown into a huge 
theory. Many contributed. We refer readers to the articles \cite{CG}, \cite{GK}, \cite{No81}, \cite {No98}, \cite{NW02}, \cite{NWY02}, \cite{NWY08}, \cite{NW10}, \cite{R04}, \cite{R09}, \cite{Siu}, \cite{SY}  and references therein for the development of 
related subjects. 
We recall some  recent results and which are the best results available at present.

In  2004, Min Ru \cite{R04} established a defect
relation for algebraically nondegenerate holomorphic curves $f : \mathbb{C} \to \mathbb{C}P^{n}$ intersecting
curvilinear hypersurfaces in general position in $\mathbb{C}P^{n}$, which settled a long-standing conjecture
of B. Shiffman (see \cite{Shi}). Recently, in \cite{R09} he further extended the above mentioned
result to holomorphic curves $f : \mathbb{C} \to  V$ intersecting hypersurfaces  in general position in $V$, where $V$  is
an arbitrary nonsingular complex projective variety in $ \mathbb{C}P^{k}$. We now state his celebrated theorem.

Let $V \subset  \mathbb{C}P^{k}$  be a smooth complex projective variety of dimension $n\geq1.$ Let $D_1,\cdots,D_q$ be hypersurfaces in
$\mathbb{C}P^{k},$ where $q > n.$  Also, $D_1,\cdots,D_q$ are
said to be in general position in $V$  if for every subset $\{i_0,\cdots,i_n\}\subset \{1,\cdots,q\},$ 
$$V \cap suppD_{i_0} \cap\cdots\cap suppD_{i_n} =\emptyset,$$
where $suppD$ means the support of the divisor $D.$  A map $f : \mathbb{C}\to V$ is said to
be algebraically nondegenerate if the image of $f$ is not contained in any proper
subvarieties of $V.$

{\bf Theorem of Ru} (see \cite{R09})\ {\it  Let $V\subset \mathbb{C}P^{k}$ be a smooth complex
projective variety of dimension $n \geq 1.$ Let $D_1,\cdots,D_q$ be hypersurfaces in
$\mathbb{C}P^{k}$ of degree $d_j,$  located in general position in $V.$  Let $f : \mathbb{C} \to V$ be an
algebraically nondegenerate holomorphic map. Then, for every $\epsilon > 0,$
$$\sum_{j=1}^qd_j^{-1}m_f (r;D_j) \leq (n + 1 + \epsilon)T(r,f),$$
where the inequality holds for all $r \in (0, \infty)$ except for a possible set $E$ with
finite Lebesgue measure.}

As the first steps towards establishing the second main theorems for curvilinear divisors in subgeneral position in a (nonsingular) 
complex projective variety, recently, D. T. Do and V. T. Ninh in \cite{DN} and G. Dethloff, V. T. Tran and D. T. Do in \cite{DTT}
gave the Cartan-Nochka's second main  theorem with the truncation for  holomorphic curves $f : \mathbb{C} \to  V$ intersecting hypersurfaces located in $N$-subgeneral position in an arbitrary smooth complex projective variety $V.$ 
We now state their theorem in \cite{DTT}.

Let $N\geq n$ and $q\geq N+1.$
Hypersurfaces $D_1,\cdots, D_q$ in $\mathbb{C}P^{M}$ with
$V\not\subseteq D_j$ for all $j=1,...,q$
are said to be in $N$-subgeneral position in $V$ if the two following conditions are satisfied:

 (i)$\quad$ For every $1\leq j_0<\cdots<j_N\leq q,$ $V \cap D_{j_0}\cap\cdots\cap D_{j_N}=\varnothing.$

 (ii)$\quad$ For any subset $J\subset\{1,\cdots,q\}$ such that  $0<|J|\leq n$ and $\{D_j,\; j\in J\}$ are in general position in $V$
 and $V \cap (\cap_{j\in J}D_j)\not= \emptyset$, there exists an irreducible component $\sigma_J$ of $V \cap (\cap_{j\in J}D_j)$ with $\dim\sigma_J=\dim \big( V \cap (\cap_{j\in J}D_j)\big)$ such that for any $i\in\{1,\cdots,q\}\setminus J$, if $\dim \big( V \cap (\cap_{j\in J}D_j)\big)= \dim \big(V \cap D_i \cap (\cap_{j\in J}D_j)\big),$ then $D_i$ contains $\sigma_J.$ 

\vskip0.3cm

{\bf Theorem of Dethloff-Tran-Do} (see \cite{DTT})\ {\it  Let $V\subset \mathbb{C}P^{M}$ be a smooth complex projective variety of
dimension $n\geq 1.$ Let f be an algebraically nondegenerate
holomorphic mapping of $\mathbb{C} $ into $V.$ Let  $D_1,$ $\cdots,$ $D_q$
($V\not\subseteq D_j$) be hypersurfaces in $\mathbb{C}P^{M}$ of
degree $d_j,$ in $N$-subgeneral position in $V,$ where $N\geq n$ and $q\geq 2N-n+1.$ Then, for every $\epsilon>0,$ there exist  positive
integers $L_j\:(j=1,...,q)$ depending on $n, \deg V, N, d_j \:(j=1,...,q),q, \epsilon$ in an explicit way such that
\begin{align*}
\Big\Vert(q-2N+n-1-\epsilon)T_f(r)\leq \sum_{j=1}^q\frac{1}{d_j}N_f^{[L_j]}(r,D_j).
\end{align*}}

We would like to emphasize the following.\\
(i) The condition $(ii)$ in the above definition on $N$-subgeneral position of Dethloff-Tan-Thai is hard. Thus, their results may not be
very useful and applicable due to this reason.\\
(ii) In the above-mentioned papers and in other papers (see \cite{CRY}, \cite{ES} for instance), either there is no the truncation levels or the truncation levels obtained  depend on the given $\epsilon$.
When $\epsilon$ goes to zero, the truncation level goes to infinite (so the truncation
is totally lost). The most serious and difficult problem (which is supposed
to be extremely hard) is to get the truncation which is independent of $\epsilon$.\\
(iii) The family $D_1,\cdots,D_q$ are hypersurfaces in $\mathbb{C}P^{k},$ i.e they are global hypersurfaces. The more difficult problem is to consider the case where the family $D_1,\cdots,D_q$ are hypersurfaces in $V,$ i.e they are local hypersurfaces. 

\vskip0.2cm
Motivated by studing holomorphic mappings into compact complex manifolds, the following arised naturally.

{\bf Problem 1.}\ {\it To show a second main theorem with the explicit truncation level for  holomorphic mappings of $\mathbb{C}$ into a compact complex manifold sharing divisors in subgeneral position.}

\vskip0.2cm
Unfortunately, this problem is extremely difficult and while a substantial amount of
information has   been   amassed   concerning 
the second main theorem for holomorphic curves into complex projective varieties through the
years, the present knowledge of this problem for arbitrary compact complex manifolds has
remained extremely meagre. So far there has been no literature of such results.

The purpose of this paper is to solve  Problem 1 in the case where divisors are defined by global sections of a holomorphic line bundle over a given compact complex manifold. To state the results, we recall some definitions of Nevanlinna theory.

    Let $L \xrightarrow{\pi} X$ be a holomorphic line bundle over a compact complex manifold $X$. Let $R_1,\cdots,R_q$ be divisors of global sections of $H^{0}(X, L^{d_i})$ respectively, where $d_1,\cdots, l_q$ are positive numbers. Take a positive integer $d$ such that $d$ is divided by $lcm(d_1,d_2,\cdots,d_q)$. 
Let $E$ be  a  $\mathbb{C}$-vector subspace of dimension $m+1$  of $H^{0}(X, L^{d})$ such that $\sigma_{1}^{\frac{d}{d_1}},\cdots,\sigma_{q}^{\frac{d}{d_q}}\in E$. Take  a basis $\{c_k\}_{k=1}^{m+1}$ a basis of  $E$.  Then $\cap_{1\le i\le m+1}\{c_i=0\}=B(E)$ and $\sigma^{\frac{d}{d_i}}_i$ is a linear combination of $\{c_k\}_{k=1}^{m+1}$.  
Take  trivializations $p_i=(p^1_i,p^2_i): \pi^{-1}(U_i) \rightarrow U_i \times \mathbb{C}$  of $L$ and $\{U_i \}$ covers $X$.
 Denote by $\lambda_{ij}$ the transition function system of this local trivialization covering. 
Set $h_i =(\sum_{1\le i\le m+1}\abs{p^2_i \circ c_i}^2)^{1/d}$ on each $U_i$. It is easy to check that $h_i= \abs{\lambda_{ij}}^2 h_j$ on $U_i \cap U_j$. 
Put $h=\{h_i\}$. Then $\ddc \log h_j= \ddc \log h_i$ for $U_i \cap U_j \not = \varnothing$ and hence, $\ddc\log  h:= \ddc\log h_i$ is well-defined in $X-B(E)$.
 
 Let $f$ be a holomorphic mapping of $\mathbb{C}$ into $X$ such that $f(\mathbb{C})\cap B(E)=\varnothing$. We define the characteristic function of $f$ with respect to $E$ to be
             $$T_f(r,L)=\int_{1}^r \frac{1}{t}\int_{\abs{z}\le t} f^* \ddc \log h,$$ 
where $r>1$. Remark that the definition does not depend on choosing basic of $E.$ Take a section $\sigma$ of $L$. Assume that $D=\nu_{\sigma}=\sum_i s_i a_i$ is its zero divisor,
 where $a_i \in \mathbb{C}$ and $s_i$ is a positive integer. For $1\le k\le \infty,$ we put  $\nu^{[k]}(D)=\sum_{i}\min\{s_i,k\} a_i$ and   $n^{[k]}(t,D)=\sum_{\abs{a_i} <t}\min\{s_i,k\}$. The truncated counting function of $f$ to level $k$  with respect to $D$ is
            $$N^{[k]}_f (r,D)=\int_{1}^r \frac{n^{[k]}(t,D)}{t} dt.$$
For brevity we will omit the character $^{[k]}$ if $k=\infty$. Put $\lvert \lvert \sigma(\cdot)\lvert \lvert:= \dfrac{\sigma(\cdot)}{\sqrt{h(\cdot)}}$
By the Jensen formula and by the Poincare-Lelong theorem, we have 
\begin{theorem}(First main theorem) Let the notations be as above. Then,
               $$T_f (r,L)=\int_{0}^{2\pi} \log\frac{1}{\lvert \lvert \sigma(f(re^{i\phi}))\lvert \lvert} d\phi + N_f(r,D)+O(1).$$
\end{theorem}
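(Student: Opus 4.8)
The plan is to derive the identity from exactly the two ingredients the statement advertises: the Poincar\'e--Lelong formula, which records how the zero divisor of $\sigma$ and the Chern form of $(L,h)$ are related, and the Green--Jensen formula, which converts the area integral defining $T_f(r,L)$ into an average over the circle $\abs{z}=r$. First I would fix the local picture along $f$. Since $f(\mathbb{C})\cap B(E)=\varnothing$, the function $h\circ f$ is smooth and strictly positive on $\mathbb{C}$, and $\sigma\circ f$ is a holomorphic section of the (necessarily trivial) pullback bundle $f^{*}L\to\mathbb{C}$, which I regard as a holomorphic function $g$ on $\mathbb{C}$ whose zero divisor is exactly $D=\nu_{\sigma}=\sum_i s_i a_i$. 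With this reading one has, away from the points $a_i$, the pointwise identity $\log\frac{1}{\Vert\sigma(f(z))\Vert^{2}}=\log (h\circ f)(z)-\log\abs{g(z)}^{2}$.

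Next I would run the Poincar\'e--Lelong step. On $X\setminus B(E)$ the form $\ddc\log h$ is the smooth Chern form of $(L,h)$, and Poincar\'e--Lelong gives, as an identity of currents on $\mathbb{C}$ after pulling back by $f$,
\[
f^{*}\ddc\log h \;=\; [D]\;+\;\ddc\log\frac{1}{\Vert\sigma(f)\Vert^{2}},
\]
where $[D]$ is the integration current of the divisor $D$. Applying the operator $\int_{1}^{r}\frac{dt}{t}\int_{\abs{z}\le t}(\cdot)$ to both sides, the left-hand side is $T_f(r,L)$ by definition, while $\int_{1}^{r}\frac{dt}{t}\int_{\abs{z}\le t}[D]=\int_{1}^{r}\frac{n^{[\infty]}(t,D)}{t}\,dt=N_f(r,D)$.

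Finally comes the Jensen step. I would apply the Green--Jensen formula to the $\delta$-subharmonic function $\log\frac{1}{\Vert\sigma(f)\Vert^{2}}$, whose only singularities are logarithmic poles at the $a_i$; with the normalisation of $\ddc$ and of the boundary measure built into the definition of $T_f$ this yields
\[
\int_{1}^{r}\frac{dt}{t}\int_{\abs{z}\le t}\ddc\log\frac{1}{\Vert\sigma(f)\Vert^{2}}=\int_{0}^{2\pi}\log\frac{1}{\Vert\sigma(f(re^{i\phi}))\Vert}\,d\phi-\int_{0}^{2\pi}\log\frac{1}{\Vert\sigma(f(e^{i\phi}))\Vert}\,d\phi.
\]
The inner ($r=1$) average is a finite constant, hence $O(1)$, and combining the three displays gives the asserted formula; as already remarked, nothing depends on the chosen basis of $E$, so that ambiguity too is absorbed into $O(1)$.

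The main obstacle is to carry out this last step cleanly and without circularity. Because $\log\frac{1}{\Vert\sigma(f)\Vert^{2}}$ is not smooth, one cannot invoke the smooth Green--Jensen formula directly but must use its generalisation valid for functions with logarithmic singularities, and one has to check that the Dirac contribution reconciles \emph{exactly} with the term $N_f(r,D)$ already produced in the Poincar\'e--Lelong step, so that no divisor mass is counted twice. The safe way to do this is to split $\log\frac{1}{\Vert\sigma(f)\Vert^{2}}=\log (h\circ f)-\log\abs{g}^{2}$, apply the smooth Green--Jensen formula to the genuinely smooth piece $\log (h\circ f)$ and the classical Jensen formula to the holomorphic function $g=\sigma\circ f$ (the latter producing precisely $N_f(r,D)+O(1)$), and then reassemble. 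Keeping track of the factors of $2$ coming from $\Vert\cdot\Vert$ versus $\Vert\cdot\Vert^{2}$ and from the normalisation of $\ddc$ is then the only remaining bookkeeping.
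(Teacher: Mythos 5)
Your proposal is correct and follows exactly the route the paper takes: the paper offers no written proof beyond the single sentence ``By the Jensen formula and by the Poincare--Lelong theorem, we have [the statement],'' and your argument is precisely the standard expansion of that one-liner (Poincar\'e--Lelong for the current identity, Jensen/Green--Jensen for the circle average, with the singular term handled by splitting off the holomorphic local representative of $\sigma\circ f$). The remaining discrepancies you flag (factors of $2$ and the $2\pi$ normalisation of the boundary measure) are present in the paper's own statement as well and are absorbed into the conventions and the $O(1)$.
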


Let $L \rightarrow X$ be a holomorphic line bundle over a compact complex manifold $X$ and $E$ be a $\mathbb{C}-$vector subspace  of $H^{0}(X,L).$   Let $\{c_k\}_{k=1}^{m+1}$ be a basis of $E$ and $B(E)$ be the base locus of $E.$ 
Define a mapping $\Phi: X - B(E) \rightarrow \mathbb{C}P^{m}$ by $\Phi (x):= [c_1(x):\cdots :c_{m+1}(x)]$. Denote by  $\rank E$ the maximal rank of Jacobian of $\Phi$ on $X-B(E)$. It is easy to see that this definition does not depend on choosing a basis of $E.$ 

We now state our results.    

\vskip0.2cm
\textbf{Theorem A.}\emph{ Let $X$ be a compact complex manifold. Let $L\rightarrow X$ be a holomorphic line bundle over $X$. Fix a positive integer $d$. Let $E$ be  a  $\mathbb{C}$-vector subspace of dimension $m+1$  of $H^{0}(X, L^{d})$. Put  $u=\rank E$ and $b=\dim B(E)+1$ if $B(E)\not =\varnothing$, otherwise $b=-1$. Take positive divisors $d_1, d_2,\cdots, d_q$  of $d$. Let $\sigma_j (j=1,2,\cdots,q)$ be in $H^{0}(X, L^{d_j})$ such that $\sigma_{1}^{\frac{d}{d_1}},\cdots,\sigma_{q}^{\frac{d}{d_q}}\in E$. Set $D_j=\{\sigma_j=0\}$ and denote by $R_j$ the zero divisors of $\sigma_j.$  Assume that $D_1,\cdots,D_q$ are in $N$-subgeneral position with respect to $E$ and $u>b.$  Let $f: \mathbb{C} \rightarrow X$ be an analytically non-degenerate holomorphic mapping with respect to $E$, i.e  $f(\mathbb{C})\not\subset \supp(\nu_{\sigma})$ for any $\sigma \in E\setminus \{0\}$ and $\overline{f(\mathbb{C})}\cap B(E)=\varnothing.$  
Then, 
   $$\Big\Vert (q-(m+1)K(E,N, \{D_j\}))T_f(r,L)\le \sum_{i=1}^{q} \frac{1}{d_i}N^{[m]}_f(r,R_i),$$
where 
$k_N,s_N,t_N \text{ are defined as in Proposition \ref{apro8}}\ and $
$$K(E,N, \{D_j\})=\dfrac{k_N(s_N-u+2+b)}{t_N}.$$}

We would like to point out that, in general,  the above constants $m, u$ do not depend on the dimension of $X$ (cf. Example \ref{aex1} below).

Since $u \le n_0(\{D_j\})\le n(\{D_j\}) \le m$ (cf. Subsection 2.1 below), we have a nice corollary in the case $m=u, B(E)=\varnothing.$

\textbf{Corollary 1.} \emph{Let $X$ be a compact complex manifold. Let $L\rightarrow X$ be a holomorphic line bundle over $X$. Fix a positive integer $d$. Let $E$ be  a  $\mathbb{C}$-vector subspace of dimension $m+1$  of $H^{0}(X, L^{d})$. Put  $u=\rank E.$ Take positive divisors $d_1, d_2,\cdots, d_q$  of $d$. Let $\sigma_j (j=1,2,\cdots,q)$ be in $H^{0}(X, L^{d_j})$ such that $\sigma_{1}^{\frac{d}{d_1}},\cdots,\sigma_{q}^{\frac{d}{d_q}}\in E$. Set $D_j=\{\sigma_j=0\}$ and denote by $R_j$ the zero divisors of $\sigma_j.$  Assume that $B(E)=\varnothing, \ m=u$ and $D_1,\cdots,D_q$ are in $N$-subgeneral position with respect to $E.$   Let $f: \mathbb{C} \rightarrow X$ be an analytically non-degenerate holomorphic mapping with respect to $E$, i.e  $f(\mathbb{C})\not\subset \supp(\nu_{\sigma})$ for any $\sigma \in E\setminus \{0\}.$  
Then, 
   $$\Big\Vert (q-2N+u-1)T_f(r,L)\le \sum_{i=1}^{q} \frac{1}{d_i}N^{[m]}_f(r,R_i).$$}

In the special case where $X= \mathbb{C}P^{n}, d=d_1=\cdots=d_q=1, L$ is the hyperplane line bundle over $\mathbb{C}P^{n}$ and $E=H^{0}(X, L),$ then $m=u=n.$ Thus,
 the original Cartan-Nochka's second main theorem is deduced immediately from Corollary 1.

\vskip0.3cm
As we know well, the Cartan's original second main theorem has been extended to holomorphic mappings $f $ from a compact Riemann surface into $\mathbb{C}P^{n}$ 
sharing hyperplanes located in general position in $\mathbb{C}P^{n}.$ For instance,  J. Noguchi \cite{NW10} has established the Nevanlinna theory for holomorphic mappings of 
compact Riemann surfaces into complex projective spaces, and obtained the second main theorem for hyperplanes with truncation level. 
Recently, Yan Xu and Min Ru in \cite{YM} also have obtained a similar second main theorem. Motivated by the above Problem 1 and observations, we now study the following problem.

{\bf Problem 2.}\ {\it To show the second main theorem with the explicit truncation level which is independent of $\epsilon$ for  holomorphic mappings of a compact Riemann surface into a compact complex manifold sharing divisors in subgeneral position.}

The next part of this article is to show the second main theorems with an explicit truncation level which is independent of $\epsilon$ for holomorphic curves of a compact Riemann surface into a compact complex manifold sharing divisors in $N$-subgeneral position.
To state the results, we recall some definitions of Nevanlinna theory.

 Let $f$ be a holomorphic mapping of a compact Riemann surface $S$ into a compact complex manifold $X$ of dimension $n$. Let $L\rightarrow X$ be a holomorphic line bundle and $\omega$ be a curvature form of a hermitian metric in $L$.
 Let $E$ and $h$ be as above. We can see that $f^* \log h$ is a singular metric in the pulled-back line bundle $f^* L$ of $L$ (see Section 3 for definitions). 
We put 
                 $$ T(f,L)=\int_{S} f^{*}\omega $$
Note that this definition does not depend on choosing $\omega$ and by $(iii)$ of Lemma \ref{lem31} we get $T(f,L)=\int_S f^* \ddc \log h$. 
Let $R$ be a divisor in $H^{0}(X,L)$. Put $f^{*}R= \sum a_i V_i$ (this is a finite sum). Then, the truncated counting function to level $T$ of $f$ with respect to $R$ is defined by
                 $$ N^{[T]}(f,R)=\sum_{i} \min\{T, a_i\} .$$

 We now state our results.    

\vskip0.3cm
\textbf{Theorem B.}\emph{ Let $S$ be a compact Riemann surface with genus $g$ and $X$ be a compact complex manifold of dimension $n$. Let $L\rightarrow X$ be a holomorphic line bundle over $X$. Fix a positive integer $d$. Let $E$ be  a  $\mathbb{C}$-vector subspace of dimension $m+1$  of $H^{0}(X, L^{d})$. Put  $u=\rank E,$ and $b= \dim B(E)+1$ if $B(E)\not =\varnothing,$ $b=-1$ otherwise.  Take positive divisors $d_1, d_2,\cdots, d_q$  of $d$. Let $\sigma_j (j=1,2,\cdots,q)$ be in $H^{0}(X, L^{d_j})$ such that $\sigma_{1}^{\frac{d}{d_1}},\cdots,\sigma_{q}^{\frac{d}{d_q}}\in E$. Set $D_j=\{\sigma_j=0\}$ and denote by $R_j$ the zero divisors of $\sigma_j$.  Assume that $R_1,\cdots,R_q$ are in $N$-subgeneral position in $X$ and $u>b.$ 
Let $f: S\rightarrow X$ be a holomorphic mapping such that $f$ is analytically nondegenerate with respect to $E$, i.e  $f(S)\not\subset \supp(\nu_{\sigma})$ for any $\sigma \in E\setminus \{0\}$ and $f(S)\cap B(E)=\varnothing.$    
Then, 
      $$(q-(m+1)K(E,N, \{D_j\}))T_f(r,L)\le \sum_{i=1}^{q} \frac{1}{d_i}N^{[m]}_f(r,R_i)+ A(d,L),
$$
where $k_N,s_N,t_N \text{ are defined as in Proposition \ref{apro8}}$ and
$$ \ K(E,N, \{D_j\})=\dfrac{k_N(s_N-u+2+b)}{t_N},$$
$$A(d,L)=\begin{cases}
 \dfrac{m(m+1)k_N(g-1)}{t_N} & \text{ if } g\ge 1\\
  0 & \text{ if } g=0.
\end{cases} $$}

\vskip0.2cm     
We would like to emphasize the following at this moment.
By Remark \ref{re50} below, in Theorem A and in Theorem B, we have
$$\liminf_{r\to \infty} \dfrac{T_f(r,L)}{\log r} >0.$$

Finally, we will give some applications of the above main theorems. Namely, we show a unicity theorem for holomorphic curves of  a compact Riemann surface into a compact complex manifold sharing divisors in $N$-subgeneral position.
Moreover, we also generalize the Five-Point Theorem of Lappan to a normal family from an arbitrary hyperbolic complex manifold to a compact complex manifold.

\vskip0.2cm
The paper is organized as follows. In Section 2, we give a definition of hypersurfaces located in $N$-subgeneral position and construct Nocka weights for divisors defined by sections of a holomorphic line bundle. 
 In Section 3, we introduce to Nevanlinna theory for holomorphic mappings from a compact Riemann surface into a compact complex manifold. In Section 4, we show some lemmas which needed later. In Section 5 and Section 6, we end the proof of our main theorems. In Section 7, some applications of the above main theorems are given.  

\section{Hypersurfaces in $N$-subgeneral position}
 
Let $X$ be a compact complex manifold of dimension $n$. Let $L\rightarrow X$ be a holomorphic line bundle over $X$. Take $\sigma_j \in H^{0}(X, L), D_j=\{\sigma_j=0\}$ and $R_j$ is the zero divisor of $\sigma_j \ (j=1,2,\cdots,q)$. Let $E$ be  a  $\mathbb{C}$-vector subspace of dimension $m+1$  of $H^{0}(X, L)$ containing $\sigma_1,\ldots,\sigma_q$.

\begin{definition}\label{def21}\ The hypersurfaces $D_1, D_2, \cdots, D_q$ is said to be located in  $N$-subgeneral position with respect to $E$ if  for any $1\le i_0< \cdots< i_N\le q $, we have $\cap_{j=0}^{N}D_{i_j}=B(E)$.
\end{definition}
    
Assume that $\{D_j\}$ is located in $N-$subgeneral position with respect to $E$. Let $\{c_k\}_{k=1}^{m+1}$ be a basis of $E.$ 
Put  $u=\rank E,$ $b=\dim B(E)+1$ if $B(E) \not =\varnothing$ and $b=-1$ if $B(E)=\varnothing.$ Assume that $u>b.$ 

   We set  $\sigma_i= \sum_{1\le j \le m+1}a_{ij} c_j,$ where $a_{ij}\in \mathbb{C}$. Define a mapping $\Phi: X \rightarrow \mathbb{C}P^{m}$ by $\Phi (x):= [c_1(x):\cdots :c_{m+1}(x)].$ 
The above mapping is a meromorphic mapping. Let $G(\Phi)$ be the graph of $\Phi$. Define 
 $$p_1: G(\Phi)\rightarrow X, p_2: G(\Phi) \rightarrow \mathbb{C}P^{m}$$
 by $p_1(x,z)=x, p_2(x,z)=z.$  Since $X$ is compact, $p_1,p_2$ is proper and hence, $Y=\Phi(X)=p_2(p_1^{-1}(X))$ is an algebraic variety of $\mathbb{C}P^{m}$. Moreover, by definition of $\rank E$, $Y$ is of dimension $\rank E=u.$ 
Denote by $H$ the hyperplane line bundle of $\mathbb{C}P^{m}$. Put $H_i:= \sum_{1\le j \le m+1}a_{ij} z_{j-1}$, where $[z_0,z_1,\cdots,z_m]$ is the homogeneous coordinate of $\mathbb{C}P^m$. 

     For each $K\subset Q$, put $c(K)=\rank \{H_i\}_{i\in K}.$
We also set  
 $$n_0(\{D_j\})=\max\{ c(K):  K\subset Q \text{ with } \abs{K}\le N+1\}-1,$$
and
 $$n(\{D_j\})=\max\{ c(K):  K\subset Q\}-1.$$ 
Then $n(\{D_j\}),n_0(\{D_j\})$ are independent of the choice the $\mathbb{C}$-vector subspace $E$ of $H^{0}(X, L)$ containing  
$\sigma_j (1\le j\le q).$ By Lemma \ref{le:2.1} below, we see that 
               $$u \le n_0(\{D_j\})\le n(\{D_j\})\le m.$$

\begin{theorem} \label{athe1} Let notations be as above. Assume that 
  $R_1,\cdots,R_q$ are in $N$-subgeneral position with respect to $E$ and $q\ge 2N-u+2+b.$ Then, there exist Nocka weights $\omega(j)$ for $\{D_j\}$, i.e  they satisfy properties in Proposition \ref{apro8}. 
\end{theorem}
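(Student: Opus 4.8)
The statement to prove is the existence of Nochka weights, now in the relative setting of a linearly nondegenerate image variety $Y=\Phi(X)\subset \mathbb{C}P^m$ of dimension $u$ carrying a base locus $B(E)$ of dimension $b-1$. The plan is to reduce the geometric hypothesis of $N$-subgeneral position with respect to $E$ to a purely combinatorial condition on the matroid rank function $c(\cdot)=\rank\{H_i\}_{i\in(\cdot)}$, and then to run Nochka's inductive weight construction against $c(\cdot)$. The translation goes through $\Phi$: away from the indeterminacy locus one has $\{\sigma_i=0\}\setminus B(E)=\Phi^{-1}(H_i)$, so for $R\subset Q$ the identity $\bigcap_{i\in R}D_i=B(E)$ corresponds (using the graph $G(\Phi)$ and the properness of $p_1,p_2$ already set up) to $Y\cap\bigcap_{i\in R}H_i=\varnothing$. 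Since $\bigcap_{i\in R}H_i$ is a linear subspace of $\mathbb{C}P^m$ of codimension $c(R)$ and $\dim Y=u$, the projective dimension theorem shows an empty intersection forces $c(R)$ to be large; thus $N$-subgeneral position says exactly that $c(R)\ge u+1$ for every $R$ with $|R|=N+1$.

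First I would record that $c$ is the rank function of the representable matroid of the linear forms $\{H_i\}_{i\in Q}$, hence monotone and submodular, with $c(Q)-1=n(\{D_j\})\le m$ and, by Lemma \ref{le:2.1}, $u\le n_0(\{D_j\})\le n(\{D_j\})$. From the equivalence above I would establish the relative analogue of Nochka's counting lemma: for every nonempty $R\subset Q$ of small rank one obtains an upper bound of the form $|R|\le c(R)+N-(u-1-b)$, the summand $b$ being the base-locus correction coming from the dimension $b-1$ of $B(E)$ in the dimension count on $Y$; this is where the hypothesis $u>b$ is used, to keep the effective ambient dimension $u-1-b$ nonnegative. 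With this counting lemma in hand, the weights are produced by Nochka's extremal procedure: repeatedly select a subfamily on which the lemma is tight (a set of maximal density $|A|/c(A)$), assign it the uniform weight, and recurse on the complementary sublevel. The resulting rationals $\omega(j)\in(0,1]$ together with the Nochka constant $\tilde\omega$ will satisfy $0<\omega(j)\le\tilde\omega\le 1$, the subadditivity $\sum_{i\in R}\omega(i)\le c(R)$ for every $R$ with $|R|\le N+1$, and the global balance identity from which the constants $k_N$, $s_N$, $t_N$ of Proposition \ref{apro8} are read off; the numerical hypothesis $q\ge 2N-u+2+b$ is precisely what guarantees that this procedure terminates with positive weights and $\tilde\omega\le 1$.

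Finally I would verify, one by one, the properties asserted in Proposition \ref{apro8}, checking that each identity defining $k_N$, $s_N$, $t_N$ and the bound $0<\omega(j)\le\tilde\omega\le 1$ holds with the $u$- and $b$-shifted parameters in place of the classical $n$ and $2N-n+1$.

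The hard part will be the relative counting lemma: unlike the classical case the hyperplanes $H_i$ cut a proper subvariety $Y\subsetneq\mathbb{C}P^m$ rather than the whole space, and $\Phi$ is only meromorphic with indeterminacy along $B(E)$, so the dimension estimates must be carried out on $Y$ with careful bookkeeping of $B(E)$ in order to produce the correct $b$-shift. Once that inequality is in place, the monotonicity and submodularity of $c$ let Nochka's purely combinatorial engine run essentially verbatim, with $u-1-b$ playing the role of the projective dimension $n$.
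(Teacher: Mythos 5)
Your proposal follows essentially the same route as the paper: translate via $\Phi$ to the hyperplanes $H_i$ and the image variety $Y$ of dimension $u$, establish the $b$-shifted counting inequality $\abs{R}\le c(R)+N-u+b+1$ for $\abs{R}\le N+1$ (the paper's Lemma \ref{lem4}, resting on Lemma \ref{le:2.1} and Lemma \ref{lem0}), and then run Nochka's extremal density construction and normalization (Lemmas \ref{lem5}--\ref{lem6}, Propositions \ref{pro7}--\ref{apro8}). One correction: your intermediate claim that $\bigcap_{i\in R}D_i=B(E)$ translates to $Y\cap\bigcap_{i\in R}H_i=\varnothing$ and hence $c(R)\ge u+1$ is valid only when $B(E)=\varnothing$; in general the paper shows $Y\cap\bigcap_{i\in R}H_i\subset p_2(p_1^{-1}(B(E)))$, a set of dimension at most $b$, which gives only $c(R)\ge u-b$ --- and this weaker bound is precisely the source of the $b$-shift you invoke later, so your two statements of the counting lemma should be reconciled on this point.
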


In order to prove Theorem \ref{athe1} we need the following lemmas.


\begin{lemma} \label{lem0}
Let notations be as above. Then $\dim p_2(p_1^{-1}(B(E)))\le \dim B(E)+1$. Moreover, $\dim p_2(p_1^{-1}(B(E)))=0$ if $B(E)=\varnothing.$
\end{lemma}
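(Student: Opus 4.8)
The plan is to reduce the statement to a bound on the dimension of the fibres of the modification $p_1$, and then to analyze a single exceptional fibre. Since $p_2$ is proper, the image under $p_2$ of any analytic subset has dimension at most that of the subset, so $\dim p_2(p_1^{-1}(B(E)))\le \dim p_1^{-1}(B(E))$, and it suffices to prove $\dim p_1^{-1}(B(E))\le \dim B(E)+1$. When $B(E)=\varnothing$ the map $\Phi$ is holomorphic, $p_1\colon G(\Phi)\to X$ is a biholomorphism, and $p_1^{-1}(B(E))=\varnothing$; this gives the last assertion at once.

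Assume now $B(E)\neq\varnothing$ and set $W=p_1^{-1}(B(E))$, a compact analytic subset of $G(\Phi)$. Recall that $p_1$ is a proper modification that is biholomorphic over $X-B(E)$, so $W$ is precisely the exceptional set lying over $B(E)$ and $p_1|_W\colon W\to B(E)$ is proper. I would apply the fibre-dimension theorem for proper holomorphic maps to each irreducible component $W_0$ of $W$:
$$\dim W_0\le \dim p_1(W_0)+\max_{x\in p_1(W_0)}\dim\bigl(p_1^{-1}(x)\cap W_0\bigr)\le \dim B(E)+\max_{x\in B(E)}\dim p_1^{-1}(x).$$
Thus the whole statement is reduced to controlling the dimension of one exceptional fibre $p_1^{-1}(x)$, $x\in B(E)$.

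The key step, which I expect to be the main obstacle, is to show that the relevant exceptional fibre has dimension at most one, so that the displayed inequality yields $\dim W_0\le \dim B(E)+1$. I would argue locally: over a coordinate chart $U$ about $x$ with a trivialization of the bundle, $c_1,\dots,c_{m+1}$ become holomorphic functions with common zero set $B(E)\cap U$ and $\Phi=[c_1:\cdots:c_{m+1}]$; the fibre $p_1^{-1}(x)$ is carried injectively by $p_2$ onto the set of limiting directions $\lim_n\Phi(x_n)$ as $x_n\to x$ in $X-B(E)$, and one must bound the dimension of this cluster set. The cleanest way to extract the precise bound $\dim B(E)+1$ (rather than a larger one) is to intersect $G(\Phi)$ with a generic linear subspace $\Lambda\subset\mathbb{C}P^{m}$ of codimension $a:=\dim p_2(W)$: nonemptiness of $\Lambda\cap p_2(W)$ forces the strict transform in $X$ of $\Phi^{-1}(\Lambda)$, a subvariety of dimension $n-a$ cut out by divisors all containing $B(E)$, to meet $B(E)$; since these divisors vanish on $B(E)$ the two loci are linked, and the incidence $(n-a)+\dim B(E)\ge n-1$ that must then hold gives exactly $a\le \dim B(E)+1$. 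Combining this with the reduction of the previous paragraph completes the proof.
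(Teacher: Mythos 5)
Your opening reduction (properness of $p_2$ does not increase dimension, so it suffices to bound $\dim p_1^{-1}(B(E))$) is fine, but the step you yourself identify as the key one --- that every exceptional fibre $p_1^{-1}(x)$, $x\in B(E)$, has dimension at most one --- is false in general, and nothing in your sketch establishes it. The fibre of the graph of a meromorphic map over a point of its indeterminacy locus can be large: for the linear projection $\Phi=[x_1:x_2:x_3]\colon\mathbb{C}P^{3}\dashrightarrow\mathbb{C}P^{2}$ with $E=\langle x_1,x_2,x_3\rangle$ and $B(E)=\{[1:0:0:0]\}$, the graph is the blow-up of $\mathbb{C}P^{3}$ at the base point and the fibre over it is a full $\mathbb{C}P^{2}$. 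Two further warning signs: your route bounds $\dim W$ itself, which is strictly stronger than what is claimed (only the dimension of the $p_2$-image is asserted), and your argument never invokes the standing hypothesis that $D_1,\dots,D_q$ are in $N$-subgeneral position with respect to $E$, which is the only nontrivial input available. The closing paragraph with the generic linear subspace $\Lambda$ does not repair this: every section of $E$ vanishes on the base locus, so $\Phi^{-1}(\Lambda)\supset B(E)$ automatically for every $\Lambda$, and the incidence inequality you want to exploit is vacuous and yields no bound on $a$. (I would add that the example above appears to satisfy all the standing hypotheses of the section --- one can take five generic linear forms in $x_1,x_2,x_3$, $N=2$, $u=2>b=1$ --- so it is worth testing it against the statement of the lemma itself, not only against your argument.)

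The paper takes a genuinely different route, driven entirely by the subgeneral-position hypothesis. Since $D_{i_0}\cap\cdots\cap D_{i_N}=B(E)$, one descends the chain $D_{i_0}\supset D_{i_0}\cap D_{i_1}\supset\cdots$ and stops at the last partial intersection $V$ that still differs from $B(E)$; as each hypersurface cuts the dimension by at most one, $V$ is a compact analytic set with $B(E)\subset V$ and $\dim V=\dim B(E)+1$, and the paper then bounds $p_2(p_1^{-1}(B(E)))$ by the image of the restricted meromorphic map $\Phi|_V$, whose dimension is at most $\dim V$. If you want to keep a fibre-theoretic framing you would have to derive your per-fibre bound from the existence of such a $V$, but that bound is simply not what is true; the object to control is the $p_2$-image of the exceptional set, not the exceptional set itself, and any viable argument must pass through the intersection pattern of the $D_j$.
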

\begin{proof}
The second assertion is trivial. Let us consider the case of $B(E)\not= \varnothing.$ We have $\cap_{j=0}^{N} D_j= B(E) \not =\varnothing.$ Denote by $j_0$ the biggest index among $\{0,1,\cdots,N\}$ such that $\cap_{j=0}^{j_0} D_j \not= B(E).$ Using the fact that if the intersection of  a hypersurface and an analytic set of dimension $\alpha$ is not empty, its dimension is at least $\alpha -1,$ we see that  $\dim \cap_{j=0}^{j_0} D_j= \dim B(E)+1.$ Put $V=\cap_{j=0}^{j_0} D_j$. Then $\dim V=\dim B(E)+1$, $V$ is compact and $B(E) \subset V$. Consider the restriction $\Phi_1$ of $\Phi$ into $V.$ Then $\Phi_1$ is a meromorpic mapping between $V$ and $\mathbb{C}P^{m}$. Therefore, we get $\dim p_2(p_1^{-1}(B(E))) \le \dim \Phi_1(V)\le \dim V= \dim B(E)+1.$ 
\end{proof}
\begin{lemma}\label{le:2.1}
 Let $Y$, $H_j$ be as above. Then, $\rank\{H_j, 0\le j\le N\}\ge u-b$. 
 \end{lemma}
\begin{proof} Put $k=\rank\{H_j,j\in R\}$. Then, there are $0\le j_1<\cdots<j_k \le N$ such that $\rank \{H_{j_1},\cdots,H_{j_k}\}=k$ and $H_j$ ($j\in \{0,1,\cdots,N\}$) is a linear combination of $H_{j_1},\cdots, H_{j_k}$. Therefore, we have $H_0 \cap H_1\cdots \cap H_{N}\cap Y =H_{j_1}\cap \cdots \cap H_{j_k}\cap Y$.  We have 
\begin{align*}
p_1(p_2^{-1}(H_0 \cap H_1\cdots \cap H_{N}\cap Y )) &=p_1(p_2^{-1}(H_0) \cap \cdots \cap p_2^{-1}(H_N)\cap G(\Phi))\\
  &\subset D_0 \cap \cdots \cap D_N=B(E)
\end{align*}
Hence, $p_2^{-1}(H_0 \cap H_1\cdots \cap H_{N}\cap Y )\subset p_1^{-1}(B(E))$. That means 
      $$ H_0 \cap H_1\cdots \cap H_{N}\cap Y \subset p_2(p_1^{-1}(B(E))).$$
Therefore, by  Lemma \ref{lem0}, we get $\dim H_0 \cap H_1\cdots \cap H_{N}\cap Y \le b.$      
On the other hand, since $H_{j_1}\cap\cdots \cap H_{j_k}$ is an algebraic variety of dimension $m-k,$ it implies that  $b\ge m-k+u-m.$ This yields that $k\ge u-b$.           
\end{proof}

\begin{lemma} \label{lem2} For each $K\subset Q$, $c(K)\le \abs{K}$. And for $K\subset K^{'} \subset Q$ with $c(K^{'})=\abs{K^{'}},$ we have  $c(K)=\abs{K}$. 
\end{lemma}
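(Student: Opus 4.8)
The plan is to reduce both assertions to elementary facts about the rank of a finite family of vectors, since by definition $c(K)=\rank\{H_i\}_{i\in K}$ is the dimension of the linear span of the coefficient vectors $(a_{i1},\ldots,a_{i,m+1})$, $i\in K$, viewed inside the dual space of $\mathbb{C}^{m+1}$ (equivalently, the space of linear forms on $\mathbb{C}P^m$). Everything will follow from the standard behaviour of spans under passing to subfamilies, so no geometric input about the $D_j$ or $Y$ is needed.

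First I would record the inequality $c(K)\le \abs{K}$: the linear span of the family $\{H_i\}_{i\in K}$ is generated by $\abs{K}$ vectors, so its dimension cannot exceed $\abs{K}$. This holds for every $K\subset Q$ with no further hypotheses.

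For the second assertion the key observation is that the equality $c(K')=\abs{K'}$ forces the family $\{H_i\}_{i\in K'}$ to be linearly independent: if some $H_i$ ($i\in K'$) were a linear combination of the others, then the span of $\{H_i\}_{i\in K'}$ would be generated by fewer than $\abs{K'}$ vectors and the rank would be strictly smaller than $\abs{K'}$, contradicting the hypothesis. Since $K\subset K'$, the subfamily $\{H_i\}_{i\in K}$ is a subset of a linearly independent family and is therefore itself linearly independent, so its rank equals its cardinality, that is $c(K)=\abs{K}$.

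I expect no genuine obstacle here, as the statement is pure linear algebra. The only point requiring a little care is the correct reading of the condition $c(K')=\abs{K'}$ as linear independence of the \emph{indexed} family (in particular that the forms $H_i$, $i\in K'$, are pairwise distinct and nonzero); this is immediate once one unwinds the definition of $c$ as the rank of the corresponding coefficient vectors.
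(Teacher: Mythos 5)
Your proof is correct and is exactly the elementary linear-algebra argument the paper has in mind; the paper itself dismisses this lemma with ``the proof is trivial,'' and your unwinding of $c(K)=\rank\{H_i\}_{i\in K}$ (rank bounded by cardinality, and subfamilies of linearly independent families being linearly independent) is the standard way to fill that in. No issues.
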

\begin{proof} 
 The proof is trivial.
\end{proof}
\begin{lemma} \label{lem3} Let $K,R \subset Q$ such that $K\subset R$ and $c(K)=\abs{K}$. Then, there exists a set $K^{'}$ such that $K\subset K'\subset R$ and $c(K')=\abs{K'}= c(R).$ 
\end{lemma}
\begin{proof}   The proof is trivial.     
\end{proof}
\begin{lemma} \label{lem4} i) Let $R_1, R_2 \subset Q$. Then, 
     $$c(R_1 \cup R_2)+ c(R_1 \cap R_2)\le c(R_1)+ c(R_2)$$ 
ii) Let  $S_1 \subset S_2\subset Q$. Then, $\abs{S_1}- c(S_1)\le \abs{S_2}- c(S_2)$. Furthermore, if $\abs{S_2}\le N+1,$ then $\abs{S_2}-c(S_2)\le N-u+b+1$.
\end{lemma}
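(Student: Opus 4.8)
The statement to prove is Lemma 1.11 (labeled `lem4`), which asserts two submodularity-type inequalities for the rank function $c(K) = \rank\{H_i\}_{i \in K}$ on subsets of $Q$. The key observation is that $c$ is, up to an additive constant, the rank function of a matroid: indeed, for each index $i$ the linear form $H_i = \sum_j a_{ij} z_{j-1}$ is a vector in the dual space $(\mathbb{C}^{m+1})^*$, and $c(K)$ is precisely the dimension of the linear span of the corresponding vectors $\{H_i\}_{i \in K}$. Thus $c$ inherits all the standard properties of the dimension-of-span function. My plan is to prove each inequality directly from this linear-algebra interpretation.

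\textbf{Part (i): submodularity.} Let $V_{R}$ denote the linear span of $\{H_i\}_{i\in R}$ inside $(\mathbb{C}^{m+1})^*$, so that $c(R)=\dim V_R$. First I would observe the two elementary identities $V_{R_1\cup R_2}=V_{R_1}+V_{R_2}$ and $V_{R_1\cap R_2}\subseteq V_{R_1}\cap V_{R_2}$. The first holds because the generating set for the union is the union of the two generating sets; the second holds because any $H_i$ with $i\in R_1\cap R_2$ lies in both $V_{R_1}$ and $V_{R_2}$. Then I would apply the dimension formula for sums of subspaces, $\dim(V_{R_1}+V_{R_2})+\dim(V_{R_1}\cap V_{R_2})=\dim V_{R_1}+\dim V_{R_2}$, together with the inclusion above to get
$$
c(R_1\cup R_2)+c(R_1\cap R_2)\le \dim(V_{R_1}+V_{R_2})+\dim(V_{R_1}\cap V_{R_2})=c(R_1)+c(R_2).
$$
This is the desired inequality.

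\textbf{Part (ii): monotonicity of the nullity and the bound.} For $S_1\subseteq S_2$, the quantity $\abs{S}-c(S)$ is the matroid \emph{nullity} (the number of dependent relations). Adding the elements of $S_2\setminus S_1$ to $S_1$ increases the cardinality by $\abs{S_2\setminus S_1}$ while increasing the rank by at most $\abs{S_2\setminus S_1}$ (each new vector raises the span dimension by at most one), so the net change in $\abs{S}-c(S)$ is nonnegative; this gives $\abs{S_1}-c(S_1)\le\abs{S_2}-c(S_2)$. For the final assertion, assume $\abs{S_2}\le N+1$. Using Lemma \ref{lem3} I would enlarge $S_2$ to a set $K'$ (if necessary, embedding $S_2$ into an $(N+1)$-element subset $R$) and reduce to estimating the nullity of a maximal subset; more directly, applying the monotonicity just proved with $S_2$ enlarged to a set of size exactly $N+1$ reduces everything to the worst case $\abs{S_2}=N+1$. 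Then $\abs{S_2}-c(S_2)=N+1-c(S_2)$, and the key input is Lemma \ref{le:2.1}, which gives $\rank\{H_j,0\le j\le N\}\ge u-b$, i.e. any $N+1$ of the forms span at least $u-b$ dimensions; hence $c(S_2)\ge u-b$ and $\abs{S_2}-c(S_2)\le (N+1)-(u-b)=N-u+b+1$.

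\textbf{Main obstacle.} The two submodularity statements are genuinely routine linear algebra, so the only delicate point is the final numerical bound in part (ii): one must correctly chain the monotonicity step with the geometric estimate $c(S_2)\ge u-b$. The subtlety is that Lemma \ref{le:2.1} bounds the rank of a \emph{specific} $(N+1)$-element family $\{H_0,\dots,H_N\}$, whereas one needs the bound for an \emph{arbitrary} $S_2$ of size $\le N+1$; I would resolve this by noting that the $N$-subgeneral position hypothesis makes the estimate of Lemma \ref{le:2.1} apply to every $(N+1)$-subset by symmetry of the hypothesis (the relabeling of indices is immaterial), and then use monotonicity to pass from $\abs{S_2}\le N+1$ up to the extremal case. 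Everything else follows from the standard dimension formula.
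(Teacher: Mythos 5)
Your proof is correct, but for part (i) it takes a genuinely different route from the paper. The paper proves submodularity combinatorially: it uses Lemma \ref{lem2} to extract independent subsets $K\subset R_1\cap R_2$, $K\subset K_1\subset R_1$, $K_1\subset K_3\subset R_1\cup R_2$ realizing the three ranks, sets $K_2=K_3-K_1$, shows $K_2\subset R_2$ by a contradiction argument, and then counts cardinalities to conclude $c(R_2)\ge c(R_1\cup R_2)-c(R_1)+c(R_1\cap R_2)$. You instead pass directly to the spans $V_R$ and invoke the identity $\dim(V_{R_1}+V_{R_2})+\dim(V_{R_1}\cap V_{R_2})=\dim V_{R_1}+\dim V_{R_2}$ together with the inclusions $V_{R_1\cup R_2}=V_{R_1}+V_{R_2}$ and $V_{R_1\cap R_2}\subseteq V_{R_1}\cap V_{R_2}$; this is shorter and arguably cleaner, since the only non-equality is the harmless inclusion of spans, though the paper's basis-extension machinery is the version that generalizes to abstract matroid rank functions and is reused elsewhere in Section 2. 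For part (ii) the two arguments essentially coincide: your one-element-at-a-time monotonicity of the nullity $\abs{S}-c(S)$ is a streamlined form of the paper's construction of nested independent sets $S'_1\subset S'_2$, and the final bound is obtained identically in both, by enlarging $S_2$ to a set $S_3$ of cardinality exactly $N+1$ and applying Lemma \ref{le:2.1} (which, as you correctly note, applies to an arbitrary $(N+1)$-subset because the $N$-subgeneral position hypothesis is symmetric in the indices).
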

\begin{proof} i) By Lemma \ref{lem2}, there exist subsets $K,K_1,K_2$ with $K\subset R_1 \cap R_2$, $K\subset K_1\subset R_1$, $K_1\subset K_3\subset R_1\cup R_2$ such that
        $$\abs{K}=c(K)= c(R_1 \cap R_2), \abs{K_1}=c(K_1)=c(R_1),$$ 
and        
         $$\abs{K_3}=c(K_3)=c(R_1\cup R_2).$$
Set $K_2=K_3-K_1$. We show that  $K_2\subset R_2$. Indeed, otherwise there exists $i\in K_2- R_2$. Then $i \in R_1-K_1$ and hence, $K_1\cup \{i\}\subset K_3$ and $K_1\cup \{i\}\subset R_1$. This implies that if  $\abs{K_1}=c(K_1)=c(R_1),$ then 
$c(R_1)\ge c(K_1\cup \{i\})= \abs{K_1\cup \{i\}}= c(K_1)+1=c(R_1)+1.$ This is a contradiction. Thus, $K_2\subset R_2$ and hence, $K_2\cup K \subset R_2$. On the other hand, $K_2\cup K\subset K_3$ and $K_2\cap K\subset K_2\cap K_1=(K_3-K_1)\cap K_1=\varnothing$. By Lemma \ref{lem2}, we get $c(R_2)\ge c(K_2\cup K)=\abs{K_2\cup K}=\abs{K_2}+\abs{K}=(\abs{K_3}- \abs{K_1})+ \abs{K}\ge c(R_1\cup R_2)- c(R_1)+ c(R_1 \cap R_2)$. Hence, the assertion $(i)$ holds.\\
ii) By Lemma \ref{lem2}, there exist $S^{'}_{v}\, (v=1,2)$ such that $S^{'}_v \subset S_v$, $S^{'}_1 \subset S^{'}_2$ and $\abs{S^{'}_v}= c(S^{'}_v)=c(S_v)$. We show that  $(S^{'}_2 -S^{'}_1)\cap S_1= \varnothing$. Indeed, otherwise there exists $i\not \in S^{'}_1$ such that $S^{'}_1 \cup\{i\}\subset S^{'}_2$ and $S^{'}_1 \cup\{i\}\subset S_1$.  If $\abs{S^{'}_1}= c(S^{'}_1)=c(S_1),$ then $\abs{S^{'}_1}=c(S_1)\ge c(S^{'}_1 \cup \{i\})= c(S'_1)+1.$   This is  a contradiction. Thus, $(S^{'}_2 -S^{'}_1)\cap S_1= \varnothing$ and hence, $S^{'}_2- S^{'}_1 \subset S_2 - S_1$. Therefore, $c(S_2)-c(S_1)\le \abs{S^{'}_2}-\abs{S^{'}_1}+1=\abs{S^{'}_2-S^{'}_1}\le \abs{S_2-S_1}=\abs{S_2}-\abs{S_1}$. 

  If $\abs{S_2}\le N+1$, then we choose $S_3$ such that $S_2 \subset S_3\subset Q$ and $\abs{S_3}=N+1$. By Lemma \ref{le:2.1} , we have $c(S_3)\ge u-b$. Hence, $c(S_2)-\abs{S_2}\le N-u+b+1$. 
\end{proof}
For $R_1 \subsetneq R_2 \subset Q$, we set $\rho(R_1,R_2)=\dfrac{c(R_2)-c(R_1)}{\abs{R_2}-\abs{R_1}}$.
\begin{lemma} \label{lem5}   Let the notations be as above and assume that $q\ge 2N-u+2+b.$ Then, there exists a sequence of subsets $\varnothing:=R_0\subsetneq R_1\subsetneq \cdots \subsetneq R_s\subset Q$ satisfying the following conditions:\\
i) $c(R_s)<u-b$,\\
ii) $0< \rho(R_0,R_1)< \cdots <\rho(R_{s-1},R_s)\le \dfrac{u-b-c(R_s)}{2N-u+2+b-\abs{R_s}}$,\\
iii) For any $R$ with $R_{i-1}\subsetneq R\subset Q$ ($1\le i\le s$) and $c(R_{i-1})< c(R)< u-b$,  we get $\rho(R_{i-1},R_i)\le \rho(R_{i-1},R)$. Moreover, if $\rho(R_{i-1},R_i)= \rho(R_{i-1},R),$ then 
$\abs{R}\le \abs{R_s}$.\\
iv) For any $R$ with $R_s \subsetneq R\subset Q$, if $c(R_s)< c(R)< u-b,$  then $\rho(R_s,R)\ge \dfrac{u+1-c(R_s)}{2N-u+2+b-\abs{R_s}}.$ 
\end{lemma}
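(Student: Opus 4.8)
The plan is to build the chain greedily, at each step adjoining the ``cheapest'' block of indices --- the one that raises the rank $c$ by as little as possible per newly adjoined index --- and then to locate the terminal set $R_s$ so that the critical slope to the ``budget'' $\kappa:=2N-u+2+b$ sits exactly between the last edge of the chain and the first edge beyond it. The two governing facts are the submodularity of $c$ from Lemma \ref{lem4}(i) and the rank estimate of Lemma \ref{le:2.1}, which (after relabelling) guarantees that every subset of $Q$ of cardinality $\ge N+1$ has rank $\ge u-b$, while Lemma \ref{lem4}(ii) bounds the nullity $\abs{S}-c(S)\le N-u+b+1$ for $\abs{S}\le N+1$. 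Concretely, put $R_0=\varnothing$; given $R_{i-1}$ with $c(R_{i-1})<u-b$, consider all $R$ with $R_{i-1}\subsetneq R\subseteq Q$ and $c(R_{i-1})<c(R)<u-b$, let $\mu_i=\min\rho(R_{i-1},R)$ over this family, and take $R_i$ to be a minimizer of largest cardinality. Iterating while such $R$ exist produces a maximal greedy chain with edge slopes $\rho_i=\rho(R_{i-1},R_i)$, and all of (iii) is read off from this selection rule.

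First I must check that the maximal minimizer $R_i$ is well defined, and this is where Lemma \ref{lem4}(i) is genuinely used: if $R$ and $R'$ both realise the minimal slope $\mu_i$ over $R_{i-1}$, then $c(R\cup R')+c(R\cap R')\le c(R)+c(R')$ forces $\rho(R_{i-1},R\cup R')\le\mu_i$, so the minimizers are closed under union and there is a unique largest one. Granting this, the strict monotonicity $0<\rho_1<\cdots$ in (ii) follows by a mediant argument: since $\rho(R_{i-1},R_{i+1})$ is the mediant of $\rho_i$ and $\rho_{i+1}$ and $R_{i+1}$ is itself an admissible competitor over $R_{i-1}$, the inequality $\rho_{i+1}\le\rho_i$ would make this mediant $\le\mu_i$, hence $=\mu_i$ with $\abs{R_{i+1}}>\abs{R_i}$, contradicting the maximal-cardinality choice of $R_i$; positivity $\rho_1>0$ is clear because $c(R_1)>0$.

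It remains to choose $s$ and verify the terminal estimates (the upper bound in (ii) and the lower bound in (iv)), both measured against the critical slope $\theta(S):=(u-b-c(S))/(\kappa-\abs{S})$. Because the slopes $\rho_i$ increase along the chain while, by $q\ge\kappa$ and the rank lower bound, the rank $u-b$ is attainable within the budget $\kappa$ (so that $\theta(\cdot)$ is positive with all denominators $\kappa-\abs{R_i}>0$), there is an index $s$ with $\rho_s\le\theta(R_s)\le\rho_{s+1}$; this $s$ is the terminal set, and (i) holds since $c(R_s)<u-b$. The upper bound in (ii) is then the left half of this crossing, $\rho_s\le\theta(R_s)$. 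For (iv), any admissible $R\supsetneq R_s$ with $c(R_s)<c(R)<u-b$ satisfies $\rho(R_s,R)\ge\mu(R_s)=\rho_{s+1}\ge\theta(R_s)$; upgrading this to the sharper stated numerator is an integrality argument, using that the slopes are ratios of integers whose denominators $\abs{R}-\abs{R_s}$ are bounded through Lemma \ref{lem4}(ii) and the inequality $\abs{R}<\kappa$. I expect this last step --- squeezing both inequalities out of a single stopping index and extracting the integral sharpening in (iv) --- to be the main obstacle, since it is precisely here that the slack coming from the base locus $B(E)$ (the parameter $b$) enters and the bookkeeping of the denominators must be done carefully; the convexity and the union-closedness above are the routine, if indispensable, supporting steps.
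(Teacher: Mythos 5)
First, note that the paper does not actually prove Lemma \ref{lem5}: it defers to Lemma 2.4 of \cite{DTT}, and the greedy filtration you build (adjoin at each step a set of minimal slope $\rho$ over the current one and, among those, of maximal cardinality) is exactly the standard Nochka-type construction used there, so your overall route is the intended one. Your verification of (iii), and of the strict monotonicity in (ii) via the mediant inequality, is correct. The union-closedness digression, however, is both not quite right as written ($R\cup R'$ need not remain admissible when $c(R\cup R')\ge u-b$, and the submodularity estimate requires a lower bound on $c(R\cap R')-c(R_{i-1})$ that can fail when the intersection gains elements but no rank) and unnecessary: finiteness of $Q$ already yields a minimizer of maximal cardinality, which is all the mediant argument uses.

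The genuine gaps are at the terminal index. You assert that ``there is an index $s$ with $\rho_s\le\theta(R_s)\le\rho_{s+1}$'' because the $\rho_i$ increase, but your threshold $\theta(S)=(u-b-c(S))/(2N-u+2+b-\abs{S})$ is evaluated at a moving set, so this is not an intermediate-value statement and needs proof. The correct argument is to stop at the \emph{first} $s$ for which either no admissible extension of $R_s$ exists or $\min_R\rho(R_s,R)\ge\theta(R_s)$ (such an $s$ exists since $c$ strictly increases and is bounded by $u-b$), and then to convert the information that one did not stop at $s-1$, i.e. $\rho_s<\theta(R_{s-1})$, into the bound $\rho_s<\theta(R_s)$ required by (ii); this uses the elementary fact that $\frac{a}{b}<\frac{A}{B}$ implies $\frac{A-a}{B-b}>\frac{A}{B}$, together with positivity of the denominator $2N-u+2+b-\abs{R_s}$, which in turn needs $\abs{R_s}\le N$ (a consequence of $c(R_s)<u-b$ and Lemma \ref{le:2.1}). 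None of this is in your write-up. Second, your argument for (iv) only yields the numerator $u-b-c(R_s)$, not the stated $u+1-c(R_s)$; since $b\ge-1$ these differ by $b+1\ge0$, and the ``integrality argument'' you invoke cannot bridge that gap --- integrality of the slopes gives no improvement of size $b+1$ in the numerator. The stated numerator is almost certainly a misprint for $u-b-c(R_s)$, which is the version consistent with (ii) and with the way (iv) is consumed in the proof of Lemma \ref{lem6}; you should prove that corrected statement explicitly and flag the discrepancy rather than gesture at a fix.
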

\begin{proof}
The proof is similar to that of Lemma 2.4 in \cite{DTT}.
\end{proof}
\begin{lemma} \label{lem6} Let the notations be as above and assume that $q\ge 2N-u+2+b.$ Then, there exist constants $\omega'(j)\ (j\in Q)$ and $\Theta'$ satisfying the following conditions:\\
i) $0< \omega'(j) \le \Theta'$ ($j\in Q$), $\Theta'\ge \frac{u-b}{2N-u+2+b}.$\\
ii) $\sum_{j\in Q} \omega'(j)\ge\Theta'(\abs{Q}-2N+u-b-2)+ u-b$.\\
iii) If $ R\subset Q $ and $0\le \abs{R}\le N+1,$ then $\sum_{j\in R}\omega'(j) \le (n(\{D_j\})-u+2+b)c(R)$.
\end{lemma}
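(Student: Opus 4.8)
The plan is to define the weights directly from the flag $\varnothing = R_0 \subsetneq R_1 \subsetneq \cdots \subsetneq R_s \subseteq Q$ furnished by Lemma \ref{lem5}, namely as the layer function of that flag. Writing $\rho_i := \rho(R_{i-1}, R_i)$ for $1 \le i \le s$, Lemma \ref{lem5}(ii) gives $0 < \rho_1 < \cdots < \rho_s \le \Theta'$, where I put
$$\Theta' := \dfrac{u-b-c(R_s)}{2N-u+2+b-\abs{R_s}};$$
here the numerator is positive by Lemma \ref{lem5}(i), and since the positive number $\rho_s$ is bounded above by this fraction, its denominator is positive as well. I then set $\omega'(j) := \rho_i$ for $j \in R_i \setminus R_{i-1}$ $(1 \le i \le s)$ and $\omega'(j) := \Theta'$ for $j \in Q \setminus R_s$. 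With $\rho_0 := 0$ and $\rho_{s+1} := \Theta'$ this is the same as saying $\{j : \omega'(j) \ge \rho_i\} = Q \setminus R_{i-1}$, which yields the telescoping identity
$$\sum_{j \in R}\omega'(j) = \sum_{i=1}^{s+1}(\rho_i - \rho_{i-1})\,\abs{R \setminus R_{i-1}}, \qquad R \subseteq Q,$$
the basic tool for all three verifications.

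Conditions (i) and (ii) are then computational. For (i), each $\omega'(j)$ equals some $\rho_i \le \rho_s \le \Theta'$ or equals $\Theta'$, and all are positive, so $0 < \omega'(j) \le \Theta'$; the inequality $\Theta' \ge \frac{u-b}{2N-u+2+b}$ I would obtain from the mediant inequality, noting that $\frac{c(R_s)}{\abs{R_s}} = \rho(R_0,R_s)$ is a weighted average of $\rho_1,\dots,\rho_s$ and hence $\le \rho_s \le \Theta'$, while $\frac{u-b}{2N-u+2+b}$ is the mediant of $\frac{c(R_s)}{\abs{R_s}}$ and $\Theta'$ and so lies between them. For (ii), restricting the telescoping identity to $R = Q$ and using $\sum_{j \in R_i \setminus R_{i-1}}\rho_i = c(R_i) - c(R_{i-1})$ gives $\sum_{j \in Q}\omega'(j) = c(R_s) + (\abs{Q}-\abs{R_s})\Theta'$; substituting the value of $\Theta'$ collapses this to $\Theta'(\abs{Q}-2N+u-b-2) + (u-b)$ exactly, so (ii) holds, in fact with equality.

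Condition (iii) is the crux and the step I expect to be hardest, being the analogue of the classical Nochka upper bound for the combinatorial rank $c(\cdot)$ in place of the linear rank. The plan is to estimate $\sum_{j \in R}\omega'(j)$ for $\abs{R} \le N+1$ from the telescoping identity, converting the cardinalities $\abs{R \setminus R_{i-1}}$ into rank increments through the submodularity of $c$ (Lemma \ref{lem4}(i)) applied to $R$ and $R_{i-1}$, and then exploiting the minimality of the densities $\rho_i$ — Lemma \ref{lem5}(iii) for the layers inside $R_s$ and Lemma \ref{lem5}(iv) for the portion of $R$ lying outside $R_s$ — so that no layer can contribute more weight than its share of $c(R)$. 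The cardinality restriction $\abs{R} \le N+1$ enters only through the deficiency estimate $\abs{S}-c(S) \le N-u+b+1$ of Lemma \ref{lem4}(ii); it is precisely this bound that replaces the plain $c(R)$ by $(n(\{D_j\})-u+2+b)\,c(R)$ and so produces the constant in (iii). I expect the real difficulty to lie in organising this into a clean induction — say on the number of layers of the flag that $R$ meets, peeling off its top layer and splitting $c(R)$ by submodularity at each step — and in separating the two cases $R \subseteq R_s$ and $R \not\subseteq R_s$, which is where essentially all of the bookkeeping resides.
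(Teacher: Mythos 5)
Your construction of the weights is essentially the paper's: both take $\omega'(j)$ to be the layer function of the flag from Lemma \ref{lem5}, with the outermost layer set to $\Theta'$. Your variant of $\Theta'$ (the explicit fraction $\frac{u-b-c(R_s)}{2N-u+2+b-\abs{R_s}}$ rather than $\rho(R_s,R_{s+1})$ for a concrete $(2N-u+2+b)$-element superset $R_{s+1}\supset R_s$, which is what the paper uses) is legitimate, and your verifications of (i) and (ii) are correct and in fact cleaner than the paper's: the mediant argument for $\Theta'\ge\frac{u-b}{2N-u+2+b}$ and the observation that (ii) holds with equality are both right, modulo the degenerate case $s=0$, where $\frac{c(R_s)}{\abs{R_s}}$ is undefined but $\Theta'$ equals the target fraction outright.

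The genuine gap is (iii), which you yourself call the crux: what you give is a plan, not a proof, and the organising principle you propose is not the one that works. The dichotomy is not ``$R\subseteq R_s$ versus $R\not\subseteq R_s$'' but $c(R\cup R_s)\le u-b-1$ versus $c(R\cup R_s)\ge u-b$. Lemma \ref{lem5}(iii)--(iv) apply only while the relevant ranks stay strictly below $u-b$, so the flag/submodularity argument (intersect $R$ with the flag, show each restricted density $\rho(R\cap R_{i-1},R\cap R_i)$ dominates $\rho_i$ via Lemma \ref{lem4}(i) and Lemma \ref{lem5}(iii)--(iv), then telescope) is available exactly in the first case, where it yields the stronger bound $\sum_{j\in R}\omega'(j)\le c(R)$ with no extra factor. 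In the second case that argument is unavailable, and one must instead use the crude estimate $\sum_{j\in R}\omega'(j)\le\Theta'\abs{R}$ together with $\abs{R}\le c(R)+N-u+b+1$ from Lemma \ref{lem4}(ii) and $c(R)\ge u-b-c(R_s)$ from submodularity applied to $R$ and $R_s$; it is only in this branch that the factor $n(\{D_j\})-u+2+b$ arises, and checking that the resulting product of fractions really is at most $n(\{D_j\})-u+2+b$ is a separate numerical estimate that your sketch never performs. Until both branches are carried out, (iii) --- and hence the lemma --- is not established.
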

\begin{proof} 
By the condition $(i)$ of Theorem \ref{athe1}, we have $\abs{R_s}\le N$. 

Take a subset $R_{s+1}$ of $Q$ such that $\abs{R_{s+1}}=2N-u+2+b\ge N+1$ and $R_s\subset R_{s+1}$. 
Set 
    $$\Theta'= \rho(R_s,R_{s+1})=\frac{c(R_{s+1})-c(R_s)}{2N-u+2+b-\abs{R_s}},$$
and 
$$\omega'(j)=\begin{cases}
              \rho(R_i,R_{i+1}) &\text{ if $j\in R_{i+1}-R_i$ for some $i$ with $1\le i \le s$},\\
               \Theta' & \text{ if $j \not \in R_s$}.     
              \end{cases}$$
By $c(R_{s+1})\le n(\{D_j\})$, we have
                  $$\frac{u-b-c(R_s)}{2N-u+2+b-\abs{R_s}}\le \Theta'' \le  \frac{n(\{D_j\})-c(R_s)}{2N-u+2+b-\abs{R_s}}.$$                            
By Lemma \ref{lem5} ii), we get 
\begin{align}\label{eq:04}
\omega'(j)\le \Theta'\text { for all }j\in Q.
\end{align}
 We have
\begin{align*}
\sum_{j=1}^{q}\omega'(j)&= \sum_{j\in Q-R_{s+1}}\omega'(j) +\sum_{i=0}^{s}\sum_{j\in R_{i+1}-R_i} \omega'(j)\\
              &= \Theta'(q- 2N+u-b-2)+ \sum_{i=0}^{s}(c(R_{i+1})- c(R_i))\\
              & \ge \Theta'(q- 2N+u-b-2)+ u-b. 
\end{align*} 
This yields that
              $$\Theta'(q-2N+u-2-b)+u-b\le q\Theta'.$$
Hence $\Theta'\ge \frac{u-b}{2N-u+2+b}.$
Combining with (\ref{eq:04}), we see that $\omega'(j)$ and $\Theta'$ satisfy $(i)$ and $(ii).$

We now check the condition $(iii).$ Take an arbitrary subset $R$ of $Q$ with $0< \abs{R}\le N+1$.\\

 \textbf{Case 1.} $c(R\cup R_s)\le u-b-1$.
 
  Set 
  $$R^{'}_{i}=\begin{cases}
             R\cap R_i & \text{ if } 0\le i\le s,\\
             R  & \text{ if } i=s+1 
             \end{cases} $$
We show that for any $i\in \{1, \cdots,s+1\}$, if $\abs{R^{'}_{i}}> \abs{R^{'}_{i-1}}$ then
\begin{align}\label{eq:5}                   
c(R^{'}_{i}\cup R_i)> c(R_{i-1}).
\end{align}
and 
\begin{align}\label{eq:6}                     
\rho(R_{i-1}, R_i)\le \rho(R^{'}_{i-1}, R^{'}_{i}).
\end{align}
* If $i=1:$ Since $\abs{R^{'}_1}>\abs{R^{'}_0}=0,\ R^{'}_1\not =\varnothing.$ Then $c(R^{'}_{1}\cup R_0)= c(R^{'}_1)> 0=c(R_{0}).$ \\
* If $i\ge 2:$ Since $\abs{R^{'}_i}> \abs{R^{'}_{i-1}},$  it implies that $\abs{R^{'}_i\cup R_{i-1}}> \abs{R_{i-1}}$. On the other hand, we have $c(R_{i-2})<c(R_{i-1})\le c(R^{'}_i\cup R_{i-1})\le c(R\cup R_s)\le n$. By Lemma \ref{lem5} $(iii),$ we get $\rho( R_{i-2}, R_{i-1})< \rho(R_{i-2}, R^{'}_i \cup R_{i-1})$. This yields that 
   $$\frac{c(R_{i-1}-R_i)}{\abs{R_{i-1}}- \abs{R_{i-2}}}< \frac{c(R^{'}_i\cup R_{i-1})-c(R_{i-2})}{\abs{R^{'}_i\cup R_{i-1}}- \abs{R_{i-2}}}.$$
Since $\abs{R_{i-1}}< \abs{R^{'}_i\cup R_{i-1}},$ it implies that $c(R_{i-1})< c(R^{'}_i\cup R_{i-1})$. The inequality (\ref{eq:5}) is proved. 

 We now prove (\ref{eq:6}). 

By (\ref{eq:5}), $c(R_{i-1})< c(R^{'}_i\cup R_{i-1})\le c(R\cup R_s)\le n$. By Lemma \ref{lem5} $(iii)$ for the case $1\le i\le s$ and $(iv)$ for the case $i=s+1$, we have 
    $\rho(R_{i-1}, R_i)\le \rho(R^{'}_{i-1}, R^{'}_{i})\ (1\le i\le s+1),$ where
$\rho(R_s,R_{s+1})=\dfrac{n+1-c(R_s)}{2N-n+1-\abs{R_s}}.$ Therefore, by Lemma \ref{lem4}, we have
\begin{align*}
\rho(R_{i-1}, R_i)&\le \rho(R^{'}_{i-1}, R^{'}_{i})\\
                  &=\frac{c(R^{'}_i\cup R_{i-1})-c(R_{i-2})}{\abs{R^{'}_i\cup R_{i-1}}- \abs{R_{i-2}}}\le 
\frac{c(R^{'}_i) -c(R^{'}_i\cap R_{i-1})}{\abs{R^{'}_i\cup R_{i-1}}- \abs{R_{i-1}}}\\
                  &=\frac{c(R^{'}_i)-c(R^{'}_{i-1})}{\abs{R^{'}_i}- \abs{R^{'}_{i-1}}}\\
                  &=\rho(R^{'}_{i-1},R^{'}_i).  
\end{align*}
The inequality (\ref{eq:6}) is proved. By (\ref{eq:6}), we see that
\begin{align}\label{eq:7} 
\omega'(j)\le \rho(R^{'}_{i-1}, R^{'}_i) \text{ for all } j\in R^{'}_i - R^{'}_{i-1}\, (1\le i\le s+1).
\end{align}
By (\ref{eq:7}), we have 
\begin{align*}
\sum_{j\in R}\omega'(j) &= \sum_{i=1}^{s+1} \sum_{j\in R^{'}_i- R^{'}_{i-1}} \omega'(j)\\
                      &\le \sum_{i : R^{'}_i- R^{'}_{i-1}\not =\varnothing} (\abs{R^{'}_i}- \abs{R^{'}_{i-1}}). (\rho(R^{'}_{i-1},R^{'}_i) \\
                      &\le c(R^{'}_{s+1})- c(R^{'}_0)=c(R).
\end{align*}
Hence, the assertion $(iii)$ holds in this case.

 \textbf{Case 2.} $c(R\cup R_s)\ge u-b$. 

By Lemma \ref{lem4} and since $\abs{R}\le N+1$, we have 
        $$\abs{R}\le c(R)+N-u+b+1$$
and 
        $$u-b-c(R_s)= c(R\cup R_s)- c(R_s)\le c(R)- c(R\cap R_s)\le c(R).$$

By $\omega'(j)\le \Theta'$, we have
\begin{align*}
\sum_{j\in R} \omega'(j)\le \Theta'\abs{R}&\le \Theta'(c(R)+N-u+b+1)\\
                                               &=\Theta' \cdot c(R)\cdot \bigg(1+\frac{N-u+b+1}{c(R)}\bigg)\\
                                               &\le \Theta' \cdot c(R)\cdot \bigg(1+\frac{N-u+b+1}{u-b-c(R_s)}\bigg)\\
                                               &=\Theta' \cdot c(R)\cdot \frac{N+1-c(R_s)}{u-b-c(R_s)}\\
                                               &\le c(R)\cdot \frac{n(\{D_j\})-c(R_s)}{2N-u+2+b-\abs{R_s}}\cdot \frac{N+1-c(R_s)}{u-b-c(R_s)}\\
                                                &\le (n(\{D_j\})-u+2+b)c(R).
\end{align*}          
Lemma \ref{lem6} is proved.
\end{proof}

\begin{proposition}\label{pro7} Let the notations be as above and assume that $q\ge 2N-u+2+b$.  Take arbitrary non-negative real constants $E_1, E_2,\cdots, E_q$ and  a subset $R$ of $Q$ with $\abs{R}\le N+1.$ Then, there exist $j_1, j_2, \cdots, j_{c(R)} \in R$  such that 
$$\cap_{1\le i\le c(R)}H_{j_i} \cap Y=\cap_{j\in R}H_j\cap Y$$ 
and
$$ \sum_{j\in R}\omega'(j )E_j \le  (n(\{D_j\})-u+2+b)\sum_{ 1\le i\le c(R)}E_{j_i}.$$   
\end{proposition}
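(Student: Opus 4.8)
The plan is to produce the indices $j_1,\dots,j_{c(R)}$ by a greedy, weight-ordered basis selection and then to derive the numerical inequality by summation by parts combined with part (iii) of Lemma \ref{lem6}. First I would note that the asserted geometric identity is pure linear algebra: as the $H_j$ are linear forms on $\mathbb{C}P^m$, the variety $\cap_{j\in R}H_j$ depends only on the linear span of $\{H_j\}_{j\in R}$, so whenever $\{H_{j_1},\dots,H_{j_{c(R)}}\}$ is a basis of that span one has $\cap_{1\le i\le c(R)}H_{j_i}=\cap_{j\in R}H_j$ inside $\mathbb{C}P^m$, and intersecting with $Y$ gives the required equality. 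Hence it suffices to choose the $j_i$ to form a basis of $\mathrm{span}\{H_j:j\in R\}$ for which the weighted inequality also holds.

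To that end I would relabel $R=\{i_1,\dots,i_p\}$ with $p=\abs{R}$ so that $E_{i_1}\ge E_{i_2}\ge\cdots\ge E_{i_p}$, and set $R_0=\varnothing$, $R_l=\{i_1,\dots,i_l\}$. Running the matroid greedy algorithm, I select $i_l$ exactly when $c(R_l)=c(R_{l-1})+1$. The selected indices $j_1,\dots,j_{c(R)}$ are then linearly independent and span $\mathrm{span}\{H_j:j\in R\}$, so they form the desired basis, and by construction the number of selected indices lying in $R_l$ equals $c(R_l)$ for every $l$.

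For the estimate, put $C=n(\{D_j\})-u+2+b$ and $e_l=E_{i_l}$ with $e_{p+1}:=0$. Summation by parts gives
$$\sum_{j\in R}\omega'(j)E_j=\sum_{l=1}^{p}(e_l-e_{l+1})\sum_{j\in R_l}\omega'(j).$$
Since $R_l\subset R$ has $\abs{R_l}\le\abs{R}\le N+1$, Lemma \ref{lem6}(iii) applies to each $R_l$ and yields $\sum_{j\in R_l}\omega'(j)\le C\,c(R_l)$; as $e_l-e_{l+1}\ge 0$ this gives
$$\sum_{j\in R}\omega'(j)E_j\le C\sum_{l=1}^{p}(e_l-e_{l+1})c(R_l).$$
Reversing the summation by parts, using $c(R_0)=0$ and $e_{p+1}=0$, the right-hand sum equals $C\sum_{l=1}^{p}e_l\big(c(R_l)-c(R_{l-1})\big)$, and since $c(R_l)-c(R_{l-1})\in\{0,1\}$ takes the value $1$ precisely at the selected indices, this collapses to $C\sum_{i=1}^{c(R)}E_{j_i}$, which is the desired inequality.

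The only genuine obstacle is arranging the weights so that Lemma \ref{lem6}(iii) may be invoked level by level: the hypothesis $\abs{R}\le N+1$ is exactly what guarantees that every initial segment $R_l$ is admissible for that lemma, while the ordering by decreasing $E_j$ together with the greedy selection is what makes the two summations-by-parts telescope onto the chosen indices. Everything else is routine bookkeeping.
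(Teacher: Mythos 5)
Your proposal is correct and follows essentially the same route as the paper: order the indices by decreasing $E_j$, select a spanning subset greedily (which settles the intersection identity), and combine Abel summation with Lemma \ref{lem6}(iii) applied to subsets of $R$ of cardinality at most $N+1$. The only difference is bookkeeping — you Abel-sum over the initial segments $R_l$ of the sorted order, while the paper first groups $R$ into the span-closure increments $K_i\setminus K_{i-1}$ and bounds $E_j\le E_{j_i}$ on each block before summing by parts — and the two computations are interchangeable.
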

\begin{proof} 
Without loss of generality, we may assume that $E_1\ge E_2 \ge \cdots \ge E_q$. We will choose $j_i^{'}s$ by induction on $i$. Firstly, choose $j_1=1$ and set 
$K_1= \{l\in R: c(\{j_1, l\})= c(\{j_1\})=1 \}$. Next, choose 
                    $$ j_2= \min\{t: t\in R-K_1 \}$$
and set $K_2=\{l\in R: c(\{j_1, j_2, l\})= c(\{j_1, j_2\}) \}$. Similarly, choose
                    $$ j_3= \min\{t: t\in R-K_2 \}.$$
We continue this process to obtain $K_1 \subsetneq K_2 \subsetneq \cdots \subsetneq K_{t} =R$. 
It is clear that $c(K_i)= i$ for 
each $1\le i\le t$. By the choice of $j_i (1\le i\le t),$ we have $c(\{j_1, j_2,\cdots,j_t\})=c(R)$ and hence, $\cap_{1\le i\le c(R)}H_{j_i}=\cap_{j\in R}H_j.$ 

  Set $K_0:= \varnothing$ and $a_i:= \sum_{j\in K_i- K_{i-1}} \omega'(j)\ (1\le i\le t)$. Then, by  Lemma \ref{lem6}, we have 
 \begin{align*}
\sum_{j=1}^{i}a_j = \sum_{j\in K_i} \omega'(j) &\le (n(\{D_j\})-u+2+b)c(K_i)\\
  & \le (n(\{D_j\})-u+2+b)i\, (1\le i\le t)
\end{align*}
On the other hand, it is easy to see that  $E_j\le E_{j_i}$ for each  $1\le i\le t$ and $j\in K_i - K_{i-1}$. Thus, we get
\begin{align*}
\sum_{j\in R}\omega'(j) E_j & =\sum_{i=1}^{t} \sum_{j\in K_i-K_{i-1}} \omega'(j)E_j\\
                  & \le \sum_{i=1}^{t} \sum_{j\in K_i-K_{i-1}} \omega'(j)E_{j_i}=\sum_{i=1}^{t}a_i E_{j_i}\\
                  &= \sum_{i=1}^{t-1}(a_1 +\cdots+ a_i)(E_{j_i}- E_{j_{i+1}})+ (a_1+\cdots+ a_{j_t}) E_{j_{t}}\\
                  & \le  (n(\{D_j\})-u+2+b)(\sum_{i=1}^{t-1} i(E_{j_i}-E_{j_{i+1}})+ t E_{j_{n}})\\
                  &= (n(\{D_j\})-u+2+b)(E_{j_1}+E_{j_2}+\cdots+ E_{j_{t}})\\
                  &\le (n(\{D_j\})-u+2+b)(E_{j_1}+E_{j_2}+\cdots+ E_{j_{t}}).
\end{align*}
Hence, 
$$\sum_{j\in R}\omega'(j) E_j \le (n(\{D_j\})-u+2+b)  (E_{j_1}+E_{j_2}+\cdots+ E_{j_{t}}). $$
The proof is completed.
\end{proof}
\begin{proposition}\label{apro7} Let the notations be as above and assume that $q\ge 2N-u+2+b$.  Take arbitrary non-negative real constants $E_1, E_2,\cdots, E_q$ and  a subset $R$ of $Q$ with $\abs{R}=N+1.$ Then, there exist $j_1, \cdots, j_{n_0(\{D_j\})+1}$ in $R$  such that 
$$\cap_{1\le i\le n_0(\{D_j\})+1}H_{j_i} \cap Y=\cap_{j\in R}H_j \cap Y$$ 
and
$$ \sum_{j\in R}\omega'(j )E_j \le  (n(\{D_j\})-u+2+b)\sum_{ 1\le i\le n_0(\{D_j\})+1}E_{j_i}.$$   
\end{proposition}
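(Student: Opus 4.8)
The plan is to deduce Proposition \ref{apro7} directly from Proposition \ref{pro7} by a simple padding argument, so that essentially all the combinatorial content is already carried by the previous proposition; the present statement merely upgrades the number of selected indices from $c(R)$ to the uniform value $n_0(\{D_j\})+1$.

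First I would record two elementary bounds. Since $\abs{R}=N+1$, the very definition of $n_0(\{D_j\})$ gives $c(R)\le n_0(\{D_j\})+1$. On the other hand, Lemma \ref{lem2} yields $c(K)\le \abs{K}\le N+1$ for every $K\subset Q$ with $\abs{K}\le N+1$, whence $n_0(\{D_j\})\le N$ and therefore $n_0(\{D_j\})+1\le N+1=\abs{R}$. Thus there is enough room inside $R$ to select $n_0(\{D_j\})+1$ distinct indices. Next I would apply Proposition \ref{pro7} to the same $R$ (legitimate because $\abs{R}=N+1$ satisfies $\abs{R}\le N+1$), producing indices $j_1,\dots,j_{c(R)}\in R$ with
$$\cap_{1\le i\le c(R)}H_{j_i}\cap Y=\cap_{j\in R}H_j\cap Y$$
and
$$\sum_{j\in R}\omega'(j)E_j\le (n(\{D_j\})-u+2+b)\sum_{1\le i\le c(R)}E_{j_i}.$$
Because $c(R)\le n_0(\{D_j\})+1\le \abs{R}$, I can enlarge this list by choosing arbitrary distinct indices $j_{c(R)+1},\dots,j_{n_0(\{D_j\})+1}$ from $R\setminus\{j_1,\dots,j_{c(R)}\}$.

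It remains to check that the two required properties survive this enlargement, which is the only (very mild) point of care. For the intersection identity I would argue by double inclusion: since $\{j_1,\dots,j_{c(R)}\}\subset\{j_1,\dots,j_{n_0(\{D_j\})+1}\}\subset R$, the intersection over the larger index set is contained in that over the smaller one, hence in $\cap_{j\in R}H_j\cap Y$, while it contains $\cap_{j\in R}H_j\cap Y$ because all the chosen indices lie in $R$; the two inclusions force equality. For the inequality I would note that the coefficient $n(\{D_j\})-u+2+b$ is positive (indeed $n(\{D_j\})\ge u$ and $b\ge -1$, so it is at least $1$) and that each $E_{j_i}\ge 0$, so appending the extra nonnegative terms $E_{j_{c(R)+1}},\dots,E_{j_{n_0(\{D_j\})+1}}$ only enlarges the right-hand side. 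Combining, one obtains $\sum_{j\in R}\omega'(j)E_j\le (n(\{D_j\})-u+2+b)\sum_{1\le i\le n_0(\{D_j\})+1}E_{j_i}$, as claimed. I expect no genuine obstacle here: the substance lies entirely in Proposition \ref{pro7}, and the work is confined to verifying that the padding preserves both conclusions by the monotonicity of intersections and of a sum of nonnegative terms.
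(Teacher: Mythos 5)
Your proposal is correct and follows essentially the same route as the paper: apply Proposition \ref{pro7} to $R$ to obtain $j_1,\dots,j_{c(R)}$, then pad with arbitrary further indices of $R$ up to $n_0(\{D_j\})+1$, the padding being harmless for both the intersection identity and the inequality. The only difference is that you spell out the verification that the padding is harmless (room in $R$, positivity of the coefficient, monotonicity), which the paper leaves implicit.
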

\begin{proof}
By the definition of $n_0(\{D_j\})$, we have $c(R)\le n_0(\{D_j\})+1$. By Proposition \ref{pro7}, there exist 
$j_1, j_2, \cdots, j_{c(R)} \in R$  such that 
$$\cap_{1\le i\le c(R)}H_{j_i}=\cap_{j\in R}H_j$$ 
and
$$ \sum_{j\in T}\omega'(j )E_j \le  (n(\{D_j\})-u+2+u)\sum_{ 1\le i\le c(R)}E_{j_i}.$$
Take $j_{c(R)+1}, \ldots,j_{n_0(\{D_j\})+1}\in R.$ Then $\{j_1, \ldots, j_{n_0(\{D_j\})+1}\}$ satisfies the conclusion. The proof 
is completed.                    
\end{proof}
 
\begin{proposition}\label{apro8} Put $k_N=2N-u+2+b, s_N=n_0(\{D_j\})$ and $t_N=\dfrac{u-b}{n(\{D_j\})-u+2+b}.$ Assume that $q\ge k_N$.  Then there exist constants $\omega(j)\ (j\in Q)$ and $\Theta$ satisfying the following conditions:
\begin{itemize}
\item[(i)] \  $0< \omega(j) \le \Theta \, (j\in Q), \Theta \ge t_N/k_N$.
\item[(ii)] \  $\sum_{j\in Q} \omega(j)\ge \Theta (q-k_N)+ t_N.$
\item[(iii)] \  Let $E_j$ ($j\in Q$) be arbitrary positive real numbers
and $R$ be  a subset  of $Q$ with $\abs{R}=N+1.$ Then, there exist $j_1, \cdots, j_{s_N+1}$ in $R$  such that 
$$\cap_{1\le i\le s_N+1}H_{j_i} \cap Y=\cap_{j\in R}H_j \cap Y$$ 
and
$$ \sum_{j\in R}\omega(j )E_j \le  \sum_{ 1\le i\le s_N+1}E_{j_i}.$$
\end{itemize}
\end{proposition}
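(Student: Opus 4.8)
The plan is to obtain the weights $\omega(j)$ and the constant $\Theta$ simply by rescaling the weights $\omega'(j)$ and the constant $\Theta'$ already produced in Lemma \ref{lem6}. Set
$$c_0 := n(\{D_j\}) - u + 2 + b,$$
and define $\omega(j) := \omega'(j)/c_0$ for each $j \in Q$ together with $\Theta := \Theta'/c_0$. The first thing I would check is that $c_0 > 0$, so that the rescaling preserves positivity: since $u \le n(\{D_j\})$ by Lemma \ref{le:2.1} and $b \ge -1$ by definition, we have $c_0 \ge 0 + 2 - 1 = 1 > 0$. In particular $\omega(j) > 0$ for all $j \in Q$.

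With this normalization, properties (i) and (ii) follow from their counterparts in Lemma \ref{lem6} by dividing through by $c_0$. For (i), I would note $0 < \omega(j) = \omega'(j)/c_0 \le \Theta'/c_0 = \Theta$, and, using $\Theta' \ge (u-b)/k_N$ from Lemma \ref{lem6}(i) (recall $k_N = 2N-u+2+b$), obtain $\Theta \ge (u-b)/(c_0 k_N) = t_N/k_N$ since $t_N = (u-b)/c_0$. For (ii), dividing the inequality $\sum_{j \in Q} \omega'(j) \ge \Theta'(q - k_N) + (u-b)$ of Lemma \ref{lem6}(ii) by $c_0$ gives $\sum_{j \in Q} \omega(j) \ge \Theta(q - k_N) + t_N$, as required.

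Finally, property (iii) is a direct rescaling of Proposition \ref{apro7}. Given positive reals $E_j$ and a subset $R$ with $|R| = N+1$, that proposition furnishes indices $j_1, \ldots, j_{n_0(\{D_j\})+1} \in R$ (note $s_N = n_0(\{D_j\})$) with $\cap_{1 \le i \le s_N+1} H_{j_i} \cap Y = \cap_{j \in R} H_j \cap Y$ and $\sum_{j \in R} \omega'(j) E_j \le c_0 \sum_{1 \le i \le s_N+1} E_{j_i}$. Dividing by $c_0$ yields precisely $\sum_{j \in R} \omega(j) E_j \le \sum_{1 \le i \le s_N+1} E_{j_i}$, which completes the verification.

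I expect this proof to be entirely routine: the substantive combinatorial work—the Nochka-type weight construction and the selection of the $s_N + 1$ indices realizing the same intersection with $Y$—has already been carried out in Lemma \ref{lem6} and Proposition \ref{apro7}, so all that remains is bookkeeping with the single normalizing factor $c_0$. The only point that genuinely requires attention is confirming $c_0 > 0$, which rests on the chain $u \le n_0(\{D_j\}) \le n(\{D_j\})$ secured through Lemma \ref{le:2.1}; without it the rescaling would reverse the inequalities.
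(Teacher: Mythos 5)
Your proposal is correct and is exactly the paper's own argument: the paper likewise defines $\Theta=\Theta'/(n(\{D_j\})-u+2+b)$ and $\omega(j)=\omega'(j)/(n(\{D_j\})-u+2+b)$ and notes that the properties follow from Lemma \ref{lem6} and Proposition \ref{apro7}. You have simply written out the bookkeeping (including the useful check that the normalizing constant is positive) that the paper leaves as ``easy to see.''
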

\begin{proof} Put 
     $$\Theta=\frac{\Theta'}{n(\{D_j\})-u+2+b}, \omega(j)=\frac{\omega'(j)}{n(\{D_j\})-u+2+b} \text{ for } j\in Q.$$
 From the above lemmas, it is easy to see that $\omega(j), k_N, t_N,s_N$ satisfy the requirements. 
\end{proof}

\section{Basic notions and auxiliary results from Nevanlinna theory}

  Let $L\rightarrow S$ be a holomorphic line bundle over a compact Riemann surface $S$. Denote by $\Gamma(S,L)$ the set of meromorphic sections of the holomorphic line bundle $L$. Let $D=\sum_{a\in S}\lambda_{a} a$ be a divisor in $\Gamma(S,L)$ (This sum is finite). Set
       $$N(D)=\sum_{a\in S}\lambda_{a}. a ,$$ 
and 
        $$N^{[k]}(D)=\sum_{a\in S}\min\{ k, \lambda_a\} \text { for } k\in \mathbb{Z}_+.$$   
    
  Let $g$ be a holomorphic function in an open subset $U$ of $S$. For $a\in U,$  denote by $\nu_{g}(a)$ the multiplicity at $a$ of the equation $g(x)=0$ and by $(g)_0$ the zero divisor of $g$. Let $(U_i \cap U_j, \xi_{ij})$ be a transition function system of $L$ and $\omega_L$ be the curvature form of a hermitian metric $\{h_i\}$ of $L$. Let $\sigma \in \Gamma(S,L)$. Denote by $\sigma_i$ the restriction of $\sigma$ on $U_i$. 

We say that $\psi=\{\psi_i \}$ is a \emph{singular metric} in $L$ if $\psi_i\ge 0$ is a nonnegative function on $U_i$ such that the following are satisfied\\
i) $\psi_i = \abs{\xi_{ij}}^2 \psi_j $ for $U_i \cap U_j \not =\varnothing$.\\
ii) $\log \psi_i$ is locally integrable.\\
As a current on $S$, the curvature current of $\psi$ is defined by
          $$\omega_{\psi}=\ddc[\log \psi_i] $$
\begin{lemma}\label{lem31} Let the notations be as above. Then, the following equations holds.\\
i)   $\ddc[\log \dfrac{1}{||\sigma(x)||^{2}}]= \omega_L - D,$ where $D=(\sigma)_{0}$ and $||\sigma(x)||^{2}=\dfrac{\abs{\sigma_i(x)}^{2}}{h_i}$ for each $x\in U_i$.\\
ii)  $\int_{S}\omega_L= N(D)$.\\
iii) $\int_{S} \omega_{\psi}= \int_{S}\omega_L$. 
\end{lemma}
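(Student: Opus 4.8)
The plan is to reduce all three identities to one underlying principle: on a compact Riemann surface $S$ without boundary, the total integral of $\ddc$ of any globally defined, locally integrable function vanishes. Indeed, interpreting $\ddc[\phi]$ as a current and pairing it with the constant test function $1$, one has $\langle \ddc[\phi],1\rangle=\langle[\phi],\ddc 1\rangle=0$ because $\ddc 1=0$. Combined with the local Poincar\'e--Lelong formula, this principle yields (i), (ii) and (iii) with little extra work.

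For (i), I would argue chart by chart. On $U_i$ we have $\log\frac{1}{\Vert\sigma(x)\Vert^{2}}=\log h_i-\log\abs{\sigma_i(x)}^{2}$. The term $\log h_i$ is smooth, and since the transition functions $\xi_{ij}$ are holomorphic and nowhere vanishing we get $\ddc\log\abs{\xi_{ij}}^{2}=0$; hence $\ddc\log h_i=\ddc\log h_j$ on overlaps, so $\ddc\log h_i$ patches to a globally well-defined smooth form, which is precisely the curvature $\omega_L$. For the second term, the Poincar\'e--Lelong formula gives $\ddc[\log\abs{\sigma_i}^{2}]=(\sigma_i)_0=D$ as currents on $U_i$. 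Subtracting the two contributions yields $\ddc[\log\frac{1}{\Vert\sigma\Vert^{2}}]=\omega_L-D$ on each $U_i$; since the left-hand side is the $\ddc$ of a function that is defined globally on $S\setminus\supp D$, these local identities are automatically consistent and assemble into the asserted global equation.

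For (ii), I would integrate the identity of (i) over $S$. The function $\log\frac{1}{\Vert\sigma\Vert^{2}}$ is globally defined and has only logarithmic singularities along $\supp D$, so it lies in $L^{1}(S)$; by the vanishing principle above, $\int_S \ddc[\log\frac{1}{\Vert\sigma\Vert^{2}}]=0$. Hence $\int_S\omega_L=\int_S D=N(D)$, the degree of $D$. For (iii), the key observation is that $\psi_i$ and $h_i$ satisfy the same cocycle law, $\psi_i=\abs{\xi_{ij}}^{2}\psi_j$ and $h_i=\abs{\xi_{ij}}^{2}h_j$, so the ratio $\rho:=\psi_i/h_i$ is a single globally defined nonnegative function on $S$, and $\log\rho=\log\psi_i-\log h_i$ is globally defined and locally integrable (recall $\log\psi_i$ is locally integrable by the definition of a singular metric, and $\log h_i$ is smooth). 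Therefore $\omega_\psi-\omega_L=\ddc[\log\rho]$, and integrating over the compact surface gives $\int_S(\omega_\psi-\omega_L)=\int_S\ddc[\log\rho]=0$, i.e. $\int_S\omega_\psi=\int_S\omega_L$.

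The main obstacle is the rigorous justification of $\int_S\ddc[\phi]=0$ when $\phi$ is only $L^{1}$ rather than smooth, since one cannot apply Stokes' theorem directly to a form with logarithmic poles. I would handle this via the current-theoretic transpose relation $\langle\ddc[\phi],1\rangle=\langle[\phi],\ddc 1\rangle=0$ used above; alternatively, one excises shrinking coordinate disks around the singular points, applies Stokes on the complement, and checks that the boundary integrals of the logarithmic singularities over the shrinking circles tend to $0$. Everything else is routine: the local integrability needed in (iii) is built into the definition of a singular metric, and the globality of $\omega_L=\ddc\log h_i$ and of $\rho=\psi_i/h_i$ follows immediately from the matching cocycle laws.
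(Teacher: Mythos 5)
Your proof is correct and follows essentially the same route as the paper: local Poincar\'e--Lelong on each chart plus the identification $\omega_L=\ddc\log h_i$ for part (i) (the paper makes the patching precise with a partition of unity where you invoke the sheaf property of currents), and then integration over the compact surface $S$ together with the vanishing of $\int_S\ddc[\phi]$ for a global $L^1$ function $\phi$ to get (ii) and (iii). Your explicit justification $\langle\ddc[\phi],1\rangle=\langle[\phi],\ddc 1\rangle=0$ is a welcome detail that the paper leaves implicit.
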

\begin{proof} 
i) Take a partition of unity $\{c_i\}$ subordinated to the covering $\{U_i\}$ of $S$ and $f$ is a differential function on $S$. Then,
\begin{align*}
\ddc[\log \dfrac{1}{||\sigma(x)||^{2}}](f)&=\ddc[\log \dfrac{1}{||\sigma(x)||^{2}}](\sum_{i}c_i f)
\end{align*}
\begin{align*}
                                          &=\sum_{i}\ddc[\log \dfrac{1}{||\sigma(x)||^{2}}](c_i f)\\
                                          &=\sum_{i}(\ddc[\log h_i](c_i f)- \ddc[\log \sigma_i](c_i f))\\                                          
                                          &=\sum_{i}(\omega_L (c_i f)- (\sigma_i)_0(c_i f))\\
                                          &(\text{by Poincare-Lelong formular})\\
                                          &=\omega_L(f)-D(f).                                              
\end{align*}
ii) The desired formula is obtained from $(i)$ by integrating over $S$.\\
iii) By the construction of $\omega_L, \omega_{\psi}$, there is a locally integrable function $\phi$ such that $\omega_{\psi}- \omega_L= \ddc[\log \phi]$. By integrating over $S$, we get $\int_{S} \omega_{\psi}= \int_{S}\omega_L$.
\end{proof}
Let $f$ be a holomorphic mapping of $S$ into a complex manifold $X$. Let $L^{'}$ be a holomorphic line bundle over $X$. The pulled-back holomorphic line bundle of $L^{'}$ on $S$ by $f$ is denoted by $f^{*}L^{'}$. By Lemma \ref{lem31},
               $$T(f,L^{'})= \int_{S} f^{*}\omega_{L^{'}} .$$       
does not depend on choosing curvature form $\omega_{L^{'}}$ of $L^{'}$. We call it \emph{the characteristic function} of $f$ with respect to $L$.\\

 Let $d$ be a positive integer and $\Gamma(X,(L^{'})^d)$ be the set of meromorphic sections of $(L^{'})^d.$
Let $D$ be a divisor in $\Gamma(X,(L^{'})^d)$. The truncated counting function to level $k$ of $f$ with respect to $D$ is defined by
               $$N^{[k]}(f,D)= N^{[k]}(f^{*}(D)).$$
\begin{theorem}\label{thm32} (The first main theorem) Let the notations be as above. Then,
                $$T(f,L^{'})=\frac{N(f,D)}{d}.$$ 
\end{theorem}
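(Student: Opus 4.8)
The plan is to reduce the statement to Lemma \ref{lem31}(ii) applied to the pulled-back line bundle over $S$. Let $\sigma \in \Gamma(X,(L^{'})^d)$ be a section with $D=(\sigma)_0$, and recall the standing hypothesis that $f(S)$ is not contained in $\supp D$, which guarantees that the pullback $f^{*}\sigma$ is a nonzero (meromorphic) section of $f^{*}\big((L^{'})^d\big)=(f^{*}L^{'})^d$. The first step is the elementary identification of divisors: the zero divisor of $f^{*}\sigma$ is exactly the pullback $f^{*}D$, so that by the very definition $N(f,D)=N(f^{*}(D))$ we have
$$N\big((f^{*}\sigma)_0\big)=N(f^{*}D)=N(f,D).$$

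Next I would apply Lemma \ref{lem31}(ii) to the holomorphic line bundle $(f^{*}L^{'})^d$ over the compact Riemann surface $S$, together with the section $f^{*}\sigma$, obtaining
$$\int_{S}\omega_{(f^{*}L^{'})^d}=N\big((f^{*}\sigma)_0\big)=N(f,D),$$
where $\omega_{(f^{*}L^{'})^d}$ is the curvature form of an arbitrary hermitian metric on $(f^{*}L^{'})^d$. The remaining ingredient is the naturality and multiplicativity of curvature: fixing a hermitian metric on $L^{'}$ with curvature form $\omega_{L^{'}}$, the pulled-back metric on $f^{*}L^{'}$ has curvature $f^{*}\omega_{L^{'}}$, and its $d$-th tensor power carries curvature $d\,f^{*}\omega_{L^{'}}$. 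Since $T(f,L^{'})=\int_{S}f^{*}\omega_{L^{'}}$ is independent of the chosen curvature form (as recorded just after its definition, via Lemma \ref{lem31}(iii)), this yields
$$\int_{S}\omega_{(f^{*}L^{'})^d}=d\int_{S}f^{*}\omega_{L^{'}}=d\,T(f,L^{'}).$$
Comparing the two displayed integrals gives $d\,T(f,L^{'})=N(f,D)$, which is the asserted identity after dividing by $d$.

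I do not anticipate a genuine obstacle: the entire content is carried by Lemma \ref{lem31}, and the argument is bookkeeping with pullbacks of curvature forms. The only points demanding care are checking that $f^{*}\sigma\not\equiv 0$, so that its divisor is well-defined (this follows from the analytic nondegeneracy of $f$ with respect to $E$), and the fact that $\int_{S}$ of the curvature form is a topological invariant of the line bundle rather than of the metric or the chosen section — which is exactly what parts (ii) and (iii) of Lemma \ref{lem31} encode. Consequently I would keep the write-up short, stressing the divisor identification $(f^{*}\sigma)_0=f^{*}D$ and the scaling $\omega_{(f^{*}L^{'})^d}=d\,f^{*}\omega_{L^{'}}$ as the two substantive lines.
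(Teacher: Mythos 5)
Your proposal is correct and takes essentially the same route as the paper: the paper's proof consists precisely of the observation $\omega_{(L')^{d}}=d\,\omega_{L'}$ followed by an appeal to the definition of $T(f,L')$ and to Lemma \ref{lem31}. You have simply written out explicitly the pullback bookkeeping ($(f^{*}\sigma)_0=f^{*}D$ and the application of Lemma \ref{lem31}(ii) to $(f^{*}L')^{d}$ over $S$) that the paper leaves implicit.
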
  
\begin{proof} It is easy to see that $\omega_{(L^{'})^d}=d\omega_{L^{'}}$. By definition and by Lemma \ref{lem31}, the conclusion is proved.   
\end{proof}

 Next, we construct the Wronskian. Let $\sigma_0, \sigma_1,\cdots, \sigma_l$ be sections of $L$. Consider a local coordinate $(U,z)$  of $S$ such that $L|_{U}\cong U\times \mathbb{C}$. Assume that 
$\sigma_{jU}$ is the restriction of $\sigma_j$ over $U$. We define
                             
 $$W_{(U,z)}((\sigma_j))=\begin{vmatrix}
                         \sigma_{0U} &\cdots &\sigma_{lU}\\ 
   \frac{d}{dz}\sigma_{0U} & \cdots & \frac{d}{dz}\sigma_{lU}\\
                           \vdots & \cdots & \vdots\\
                            \frac{d^l}{dz^l}\sigma_{0U}&\cdots & \frac{d^l}{dz^l}\sigma_{lU}                            
                          \end{vmatrix}$$
Let $(U^{'},z^{'})$ be an another local coordinate of $S$ and $\sigma_{jU^{'}}$ be the restriction of $\sigma_j$ over $U^{'}$. Similarly, we also define $W_{(U^{'},z^{'})}((\sigma_j))$. Suppose that $U\cap U^{'}\not= \varnothing$ and $\sigma_{jU}= \xi_{U U^{'}}\sigma_{jU^{'}}$, where $\xi_{UU^{'}}$ is transition function. It is easy to check that 
         $$W_{(U,z)}((\sigma_j))=\xi_{UU^{'}}^{l+1}W_{(U^{'},z^{'})}((\sigma_j)) (\frac{dz^{'}}{dz})^{\frac{l(l+1)}{2}}.$$
Therefore, if we set $W((\sigma_j))(x)=W_{(U,z)}((\sigma_j))(x)$ for each $x\in U,$ then 
   $$ W((\sigma_j))\in H^{0}(S, L^{l+1}\otimes K_{S}^{\frac{l(l+1)}{2}}),$$
where $K_S$ is the canonical line bundle over $S$. We have the following
\begin{lemma}\label{lem32} Let $\sigma_0,\sigma_1,\cdots,\sigma_l$ be in $H^{0}(S,L)$. Then,
\begin{itemize}
\item[(i)] \ $W((\sigma_j)) \in H^{0}(S,L^{l+1}\otimes K_{S}^{\frac{l(l+1)}{2}}).$
\item[(ii)] \  $\sigma_0,\cdots,\sigma_l$ are linearly independent iff $W((\sigma_j))\not\equiv 0.$
\item[(iii)] \ Let $A$ be an $(l+1)\times(l+1)$-matrix such that  $(\tau_o,\tau_1,\cdots,\tau_l)^{t}= A(\sigma_0,\sigma_1,\cdots,\sigma_l)^{t}.$ Then
                $W((\tau_j))=(\det A)W((\sigma_j)).$
\item[(iv)] \  If $\Phi \in \Gamma(S, K_{S}^{\frac{l(l+1)}{2}}),$ then
           $ N((\Phi)_0)= l(l+1)(g-1),$
where $g$ is the genus of $S.$
\end{itemize}
\end{lemma}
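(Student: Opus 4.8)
The plan is to dispose of the four assertions in the order (i), (iii), (ii), (iv), since (iii) is purely formal and (i) has essentially been prepared in the paragraph immediately preceding the statement.

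For (i), I would simply observe that the cocycle computation carried out just before the lemma, namely
$$W_{(U,z)}((\sigma_j))=\xi_{UU'}^{\,l+1}\,W_{(U',z')}((\sigma_j))\Big(\tfrac{dz'}{dz}\Big)^{l(l+1)/2},$$
is exactly the transformation rule of a section of $L^{l+1}\otimes K_S^{l(l+1)/2}$: the factor $\xi_{UU'}^{\,l+1}$ is the transition cocycle of $L^{l+1}$ and $(dz'/dz)^{l(l+1)/2}$ is that of $K_S^{l(l+1)/2}$. Since each local representative $\sigma_{jU}$ is holomorphic on $U$, every entry of the Wronskian determinant is holomorphic, so $W((\sigma_j))$ is a global holomorphic section, which is (i). For (iii), I fix a chart and trivialization and write $W((\sigma_j))=\det M_\sigma$, where $M_\sigma$ has $(i,k)$-entry $\frac{d^i}{dz^i}\sigma_{kU}$ for $0\le i,k\le l$. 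Because differentiation is $\mathbb{C}$-linear and the entries of $A$ are constants, the Wronskian matrix of $(\tau_j)$ equals $M_\sigma A^{t}$; taking determinants and using $\det A^t=\det A$ gives $W((\tau_j))=(\det A)\,W((\sigma_j))$ in every chart, hence globally.

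For (ii), the easy implication is that a relation $\sum_j a_j\sigma_j=0$ with constants $a_j$ not all zero forces, in each trivialization, $\sum_j a_j\frac{d^i}{dz^i}\sigma_{jU}=0$ for all $i$, so the columns of $M_\sigma$ are linearly dependent and $W\equiv 0$. The converse is the substantive point and the main obstacle. On a connected trivializing chart $U$ the local functions $\sigma_{0U},\dots,\sigma_{lU}$ are holomorphic, and I would invoke the classical Wronskian criterion for analytic functions, proved by induction on $l$ via the reduction identity
$$W(f_0,\dots,f_l)=f_0^{\,l+1}\,W\Big(\big(\tfrac{f_1}{f_0}\big)',\dots,\big(\tfrac{f_l}{f_0}\big)'\Big)$$
valid on $\{f_0\neq 0\}$: from $W\equiv 0$ it yields a constant-coefficient relation $\sum_j a_j\sigma_{jU}=0$ on $U$. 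I would then globalize: $\sum_j a_j\sigma_j$ is a global holomorphic section of $L$ vanishing on the nonempty open set $U$, so by the identity theorem together with the connectedness of $S$ it vanishes identically, giving the desired dependence. The delicate part is precisely this converse, since the criterion fails for general smooth functions and genuinely needs analyticity, after which one must upgrade the local relation to a global one.

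For (iv), I would combine Lemma \ref{lem31}(ii) with the value of the degree of the canonical bundle. Taking $\Phi$ a nonzero holomorphic section (as is the case in the application, where $\Phi=W((\sigma_j))$) and applying Lemma \ref{lem31}(ii) to $M:=K_S^{l(l+1)/2}$, together with $\omega_M=\tfrac{l(l+1)}{2}\omega_{K_S}$, gives
$$N((\Phi)_0)=\int_S\omega_M=\tfrac{l(l+1)}{2}\int_S\omega_{K_S}.$$
It then remains to use the classical fact $\int_S\omega_{K_S}=\deg K_S=2g-2$ (Gauss--Bonnet / Riemann--Roch, equivalently that a meromorphic one-form has $2g-2$ zeros minus poles), whence $N((\Phi)_0)=\tfrac{l(l+1)}{2}(2g-2)=l(l+1)(g-1)$, completing the proof.
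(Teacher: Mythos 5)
Your proposal is correct and follows essentially the same route as the paper: (i) and (iii) are the cocycle computation and the standard determinant identity, (ii) rests on the classical Wronskian criterion for holomorphic functions together with the identity theorem to pass between local representatives and global sections, and (iv) reduces to $\deg K_S=2g-2$. The only cosmetic differences are that the paper argues (ii) in the direct direction (showing the local functions $\sigma_{0U},\dots,\sigma_{lU}$ are linearly independent on each chart and then quoting the criterion) rather than your contrapositive, and for (iv) it obtains $\deg K_S^{l(l+1)/2}$ by applying the Poincar\'e--Hopf index formula to an explicit meromorphic differential and then invoking Lemma \ref{lem31}, whereas you apply Lemma \ref{lem31} directly to $\Phi$.
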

\begin{proof} 
The proof of $(i)$ is given above and the proof of $(iii)$ is an usual property of Wronskian. Now, we prove $(ii)$ and $(iv).$\\
ii) Obviously, if  $W((\sigma_j))\not\equiv 0,$ then $\sigma_0,\cdots,\sigma_l$ are linearly independent. Conversely, suppose that $\sigma_0,\cdots,\sigma_l$ are linearly independent. We show that
$\sigma_{0U},\cdots,\sigma_{lU}$ are linearly independent for all $U$. Indeed, suppose that  $\sum_{1\le i \le l}a_i \sigma_{iU}=0,$ where $a_i$ are not all zero. Take $U^{'}$ such that $U^{'}\cap U\not =\varnothing.$ Then $\sigma_{iU}=\xi_{UU^{'}}\sigma_{iU^{'}}$ and $\sum_{1\le i \le l}a_i \sigma_{iU^{'}}=0$ on $U \cap U^{'}$. Since $U \cap U^{'}$ is an open subset of $U^{'}$, we get $\sum_{1\le i \le l}a_i \sigma_{iU^{'}}=0$ on $U^{'}$. This follows that $\sum_{1\le i \le l}a_i \sigma_{i}=0.$ This is a contradiction. Thus, we have $W((\sigma_j))\not\equiv 0$.\\
iv) Take a holomorphic $(1,0)$-form $\alpha$ on $S$. By definition of $K_S$, we can consider $\alpha$ as a section of $K_S$. Hence, $\alpha^{\frac{l(l+1)}{2}}\in H^{0}(S,K_{S}^{\frac{l(l+1)}{2}})$. By the Poincare-Hopf index formula for meromorphic differential, we have $N((\alpha)_0) = 2(g-1)$. Thus, $N((\alpha)^{\frac{l(l+1)}{2}}_0) = l(l+1)(g-1)$. By Lemma \ref{lem31} ii), we have 
                 $$N((\Phi)_0)=N((\alpha)^{\frac{l(l+1)}{2}}_0) = l(l+1)(g-1).$$    
\end{proof}

\section{Some lemmas}

\begin{lemma}\label{le:4.1}
 Let $Y$ be a algebraic variety of $\mathbb{C}P^{m}$ of dimension $u$, $Z$ be an algebraic subset of $Y$ and  $H_0, H_1,\cdots,H_{N}$ ($m\ge u+1$)
 be hyperplanes in $\mathbb{C}P^{m}$ such that $H_0 \cap \cdots \cap H_{N}\cap  Y=\varnothing$. Put $R=\{1,2,\cdots,m\}$. 
Then, $\rank(H_j, 0\le j\le N)\ge u-\dim Z$. 
\end{lemma}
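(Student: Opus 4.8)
The plan is to follow the same strategy as the proof of Lemma~\ref{le:2.1}, replacing the particular set $p_2(p_1^{-1}(B(E)))$ occurring there by the general algebraic subset $Z$. First I would set $k=\rank\{H_j:0\le j\le N\}$ and extract a maximal linearly independent subfamily of the defining linear forms, say $H_{j_1},\dots,H_{j_k}$ with $0\le j_1<\dots<j_k\le N$. Since every $H_j$ is then a linear combination of $H_{j_1},\dots,H_{j_k}$, the common zero loci coincide, so that
$$\bigcap_{j=0}^{N}H_j\cap Y=\bigcap_{i=1}^{k}H_{j_i}\cap Y.$$
The point of this reduction is that $\Lambda:=\bigcap_{i=1}^{k}H_{j_i}$ is now a genuine linear subspace of $\mathbb{C}P^{m}$ of codimension exactly $k$, that is $\dim\Lambda=m-k$, which makes a clean dimension count available.

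The core step is that dimension count via the projective dimension theorem. For the $u$-dimensional projective variety $Y$ and the linear subspace $\Lambda$ of dimension $m-k$, every nonempty irreducible component of $Y\cap\Lambda$ has dimension at least $u+(m-k)-m=u-k$. I would then split into two cases. If $Y\cap\Lambda\neq\varnothing$, the displayed identity together with the hypothesis gives $Y\cap\Lambda=\bigcap_{j=0}^{N}H_j\cap Y\subseteq Z$, hence $u-k\le\dim(Y\cap\Lambda)\le\dim Z$, i.e.\ $k\ge u-\dim Z$, which is exactly $\rank\{H_j:0\le j\le N\}\ge u-\dim Z$. If instead $Y\cap\Lambda=\varnothing$, the degenerate case is even easier: by the same projective dimension theorem a projective variety of dimension $u$ must meet any linear subspace of codimension at most $u$, so emptiness forces $k\ge u+1$, and then $k\ge u+1>u-\dim Z$ holds automatically.

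The step I expect to be the main obstacle is the clean justification of the lower bound $\dim(Y\cap\Lambda)\ge u-k$. It relies essentially on working in projective space, where a positive-dimensional variety cannot avoid a hyperplane (the affine analogue of this is false), and on treating $Y$ componentwise should it fail to be irreducible, applying the estimate to a component of maximal dimension $u$. Once that estimate is secured, the remainder is the purely linear-algebraic passage to a maximal independent subfamily and the elementary comparison of dimensions, exactly as in Lemma~\ref{le:2.1}.
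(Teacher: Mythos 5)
Your argument is correct and is essentially the paper's own: the paper proves this lemma simply by pointing to Lemma~\ref{le:2.1}, whose proof is exactly your reduction to a maximal independent subfamily $H_{j_1},\dots,H_{j_k}$ cutting out a linear space of dimension $m-k$, followed by the projective dimension count $\dim(Y\cap\Lambda)\ge u-k$. Your version is in fact slightly more careful, since you separate the empty and nonempty cases and note the componentwise use of the dimension theorem, which the paper leaves implicit.
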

\begin{proof}  See Lemma \ref{le:2.1}.        
\end{proof}
\begin{lemma}\label{le:4.2}
Let $H_0, H_1,\cdots,H_{m}$ be hyperplanes in $\mathbb{C}P^{N}$. Put
$k=\rank(H_j, 0\le j\le m)$. Let $E_0, \cdots, E_m$ be positive real number such that $E_j>1$ for $0\le j\le m$. Then, there are $j_1,\cdots,j_k \in R$ such that $\rank \{H_{j_1},\cdots,H_{j_k}\}=k$ and 
             $$E_0 E_1\cdots E_m\le (E_{j_1}\cdots E_{j_k})^{m-k+2}.$$
\end{lemma}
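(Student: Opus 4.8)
The plan is to reduce the multiplicative inequality to an additive one and then choose the $H_{j_i}$ by a greedy selection procedure of the same kind used in the proof of Proposition \ref{pro7}.

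First I would normalize by reordering the indices. A permutation of $\{0,1,\dots,m\}$ acts simultaneously on the hyperplanes $H_t$ and on the weights $E_t$; it leaves $\rank(H_t,0\le t\le m)=k$ unchanged and carries a valid choice of $j_1,\dots,j_k$ in the permuted problem back to a valid choice in the original. Hence it suffices to treat the case $E_0\ge E_1\ge\cdots\ge E_m>1$. Writing $e_t=\log E_t$, every $e_t>0$ and $e_0=\max_t e_t$, and the desired estimate becomes the additive claim that there are indices $j_1<\cdots<j_k$ with $\rank\{H_{j_1},\dots,H_{j_k}\}=k$ and $\sum_{t=0}^m e_t\le (m-k+2)\sum_{i=1}^k e_{j_i}$.

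Next I would run the standard greedy selection: scan $t=0,1,\dots,m$ in increasing order and place $t$ into the chosen set exactly when $H_t$ is not a linear combination of the already-chosen hyperplanes. This produces $j_1<\cdots<j_k$ with $\rank\{H_{j_1},\dots,H_{j_k}\}=k$, the process terminating after precisely $k$ successes because the total rank is $k$. Since a hyperplane of $\mathbb{C}P^{N}$ is given by a nonzero linear form, $H_0\neq 0$, so the first index $0$ is always selected; thus $j_1=0$ and $e_{j_1}=e_0=\max_t e_t$. Set $J=\{j_1,\dots,j_k\}$.

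Finally I would split the sum over all indices into the chosen part and the discarded part. There are $(m+1)-k=m-k+1$ discarded indices, and for each $t\notin J$ we have $e_t\le e_0=e_{j_1}\le\sum_{i=1}^k e_{j_i}$, using that all $e_{j_i}\ge 0$. Hence
$$\sum_{t=0}^m e_t=\sum_{i=1}^k e_{j_i}+\sum_{t\notin J} e_t\le \sum_{i=1}^k e_{j_i}+(m-k+1)\sum_{i=1}^k e_{j_i}=(m-k+2)\sum_{i=1}^k e_{j_i},$$
and exponentiating gives $E_0E_1\cdots E_m\le (E_{j_1}\cdots E_{j_k})^{m-k+2}$. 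The argument has no genuinely hard step; the only point requiring care is that the selected rank-$k$ subset must contain the index carrying the largest weight, so that the crude bound $e_t\le e_{j_1}$ is legitimate for every discarded $t$. Sorting the $E_t$ into decreasing order and running the greedy from index $0$ (which is forced into the basis because $H_0\neq 0$) guarantees this automatically, which is exactly why I arrange the reduction in that order.
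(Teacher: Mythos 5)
Your proof is correct, but it takes a genuinely different and more elementary route than the paper's. The paper expresses every $H_l$ in terms of a fixed rank-$k$ subfamily, views the resulting coefficient forms $H'_l$ as hyperplanes in $m$-subgeneral position in $\mathbb{C}P^{k-1}$, and then imports Toda's Propositions 1 and 2 (Nochka-weight machinery): the quantity $\lambda=\min_P \rank(P)/\abs{P}$ satisfies $\lambda\ge 1/(m-k+2)$ and Toda's selection result yields $(E_0\cdots E_m)^{\lambda}\le E_{j_1}\cdots E_{j_k}$, from which the stated bound follows using $E_j>1$. You instead pass to logarithms, sort so that the largest weight sits at index $0$, and run a greedy rank selection that is forced to pick index $0$ first; the crude estimate $e_t\le e_{j_1}\le\sum_i e_{j_i}$ on each of the $m-k+1$ discarded indices then gives the factor $m-k+2$ directly. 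Your argument is self-contained and avoids the external citation entirely, at the cost of being specific to this one inequality; the paper's detour through Toda buys nothing extra here, since the subadditive structure of the Nochka weights (the refined bounds $\sum_{j\in P}\sigma(j)\le\rank(P)$ for every subset $P$) is not actually needed for this conclusion. Both arguments use the hypothesis $E_j>1$ in the same essential way, and both produce the same exponent, so your version is a legitimate and arguably cleaner replacement.
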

\begin{proof} 
Since $k=\rank(H_j, 0\le j\le m)$ there exist $k$ hyperplanes $H_{j_1},\cdots,H_{j_k}$ in $\{H_0,\cdots,H_m \}$  such that  $H_l$  is  a linear combination of this $k$ hyperplanes. Hence, we have
                $$H_l=\sum_{i=1}^k a_{li}H_{ji}, \text{ where } a_{li}\in \mathbb{C}.$$
Put $H'_l= \sum_{i=1}^k a_{li}z_{i-1}$ $,(0\le l \le m)$ as hyperplanes in $\mathbb{C}P^{k-1}$. It is easy to see that $H'_0,\cdots, H'_m$ are in $m$-subgeneral in  $\mathbb{C}P^{k-1}$. For $0\le j\le m$, put
                         $$\sigma(j)=\lambda=\min\{ \frac{\rank(P)}{\abs{P}}: P\subset R\}$$
Then, by \cite[Proposition 2]{Toda}, we have \\
i) $\lambda \ge \frac{1}{m-k+2}$.\\
ii) For any $P \subset R$,
                     $$\sum_{j\in P} \sigma(j)\le \rank(P)$$  
Next, by \cite[Proposition 1]{Toda}, we obtain that there are $j_1,\cdots,j_k \in R$ such that $\rank (\{H'_{j_1},\cdots,H'_{j_k}\})=k$ and 
             $$(E_0 E_1\cdots E_m)^{\lambda}\le E_{j_1}\cdots E_{j_k}.$$
Since $\rank (\{H'_{j_1},\cdots,H_{j_k}\})=k,$ it implies that $\rank (\{H_{j_1},\cdots,H_{j_k}\})=k$.  Hence, the conclusion is deduced from the fact that $\lambda \ge \frac{1}{m-k+2}$. 
\end{proof}
\begin{lemma} \label{le:4.3}

Let $H_1,\cdots,H_q$ be hyperplanes in $\mathbb{C}P^{m}$. Put $Q=\{1,2,\\
\cdots,q\}$. Let $u$ be a positive integer. Fix $0\le t \le q-1$. Assume that  $\rank\{H_{i_j}(0\le j\le t)\}\ge u+1$ for each $1\le i_0<i_1<\cdots<i_t\le q.$    Let $W$ be a subspace of $\mathbb{C}P^{m}.$  Then, there are $(m-u)$ hyperplanes $T_{1},\cdots, T_{m-u}$ in $\mathbb{C}P^{m}$ such that the following is satisfied:

For each $R\subset Q$ with  $\abs{R}=t+1$ and  $\rank\{H_{j}, j\in R\}\ge u+1$, we have  $\{H_j, T_i: j\in R, 1\le i\le m-u \}$ are in $(t+m-u)-$subgeneral position and $W \not \subset T_i (1\le i\le m-u).$
\end{lemma}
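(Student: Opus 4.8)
The plan is to build the hyperplanes $T_1,\dots,T_{m-u}$ one at a time by a genericity argument in the space of hyperplanes, so that each $T_k$ cuts down the relevant intersections uniformly in $R$. First I would reduce the desired conclusion to a single emptiness statement. For a fixed admissible $R$ the family $\{H_j, T_i : j\in R,\ 1\le i\le m-u\}$ has exactly $(t+1)+(m-u)=(t+m-u)+1$ members, so being in $(t+m-u)$-subgeneral position means precisely that the intersection of the entire family is empty, i.e.
\begin{align*}
\Big(\bigcap_{j\in R}H_j\Big)\cap T_1\cap\cdots\cap T_{m-u}=\varnothing .
\end{align*}
Writing $L_R:=\bigcap_{j\in R}H_j$, the hypothesis $\rank\{H_j, j\in R\}\ge u+1$ forces $L_R$ to be a projective linear subspace with $\dim L_R\le m-(u+1)=m-u-1$. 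Since $Q$ is finite there are only finitely many admissible $R$, and this finiteness is what will let one choice of each $T_k$ serve all of them at once.

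Next I would run the induction. Suppose $T_1,\dots,T_{k-1}$ have been chosen so that for every admissible $R$ we have $\dim\big(L_R\cap T_1\cap\cdots\cap T_{k-1}\big)\le m-u-k$, with the convention that an empty set has dimension $-1$. Set $M_R:=L_R\cap T_1\cap\cdots\cap T_{k-1}$. For the step I would select $T_k$ so that $M_R\not\subset T_k$ for each of the finitely many admissible $R$, and simultaneously $W\not\subset T_k$. Because a hyperplane not containing a nonempty projective linear subspace $M$ meets it in a subspace of dimension exactly $\dim M-1$, such a $T_k$ lowers the dimension of every still-nonempty $M_R$ by one, yielding $\dim\big(L_R\cap T_1\cap\cdots\cap T_k\big)\le m-u-k-1$ and advancing the induction. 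After $m-u$ steps this gives $\dim\big(L_R\cap T_1\cap\cdots\cap T_{m-u}\big)\le -1$, i.e.\ emptiness, for every admissible $R$, which is exactly the required subgeneral-position property, while $W\not\subset T_i$ holds by construction.

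The crux is the existence of a single $T_k$ avoiding all the prescribed subspaces at once, and this is the main obstacle to keep in view. Here I would pass to the dual space $(\mathbb{C}P^{m})^{\ast}$ parametrizing hyperplanes: the hyperplanes containing a fixed proper subspace $M_R$ form a proper linear subvariety of $(\mathbb{C}P^{m})^{\ast}$, and likewise the hyperplanes containing $W$ form a proper subvariety whenever $W$ does not span all of $\mathbb{C}P^{m}$. As there are only finitely many such constraints and $(\mathbb{C}P^{m})^{\ast}$ is irreducible, their union cannot exhaust $(\mathbb{C}P^{m})^{\ast}$, so an admissible $T_k$ exists; the finiteness of $Q$ together with the irreducibility of the dual space is precisely what secures the simultaneity. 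Two boundary situations should be noted: if $m=u$ there are no $T_i$ to construct, and then $\rank\{H_j, j\in R\}\ge m+1$ already gives $\bigcap_{j\in R}H_j=\varnothing$; and if $W=\mathbb{C}P^{m}$ the requirement $W\not\subset T_k$ is automatic, so it imposes no constraint.
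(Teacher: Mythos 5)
Your proof is correct. It rests on the same underlying genericity principle as the paper's: the hyperplanes violating any one of the finitely many required conditions form a finite union of proper subvarieties of the parameter space, which cannot exhaust it. The execution differs, though. The paper chooses all $m-u$ hyperplanes at once, parametrizing them by a point of $\mathbb{C}^{(m+1)(m-u)}$ and observing that for each admissible $R$ the map sending the coefficients to the tuple of $(m+1)\times(m+1)$ minors of the full coefficient matrix is a nonzero polynomial map (nonzero precisely because $\rank\{H_j,\ j\in R\}\ge u+1$ allows completion to rank $m+1$), so a choice outside the finitely many zero sets makes every full family have rank $m+1$ and hence empty intersection. You instead build the $T_k$ one at a time in the dual projective space, tracking $\dim\big(L_R\cap T_1\cap\cdots\cap T_{k-1}\big)$ and cutting it by one at each step via a prime-avoidance choice of $T_k$. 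Your version makes explicit the reduction of $(t+m-u)$-subgeneral position to the single emptiness statement for the whole family (which the paper leaves implicit in the rank condition), and it replaces the determinant bookkeeping by elementary dimension counting, at the cost of an induction; your treatment of the degenerate cases $m=u$ and $W=\mathbb{C}P^{m}$ is also cleaner than what the paper records. Both arguments are complete.
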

\begin{proof}
Put $T_i:= a_{0i} x_0+\cdots +a_{mi} x_m$ for $1\le i\le m-u$, where $a_{ij}\in \mathbb{C}$. For $R\subset Q$ with $\abs{R}=t+1,$
we consider  determinants of  all submatrices of degree $(m+1)$ of  the matrix of  the coefficients of $H_j (j\in R)\text{ and } T_i (1\le i\le m-u)$. There are $\binom{t+m-u+1}{m+1}$ such matrices. 

Let $h(T,R)$  be a mapping of  $\mathbb{C}^{(m+1)(m-u)}$ into $\mathbb{C}^{\binom{t+m-u+1}{m+1}}$ which maps $(a_{ki}: 0\le k\le m, 1\le i\le m-u)$ to $\binom{t+m-u+1}{m+1}-$tuple of such determinants. Then, $h(T,R)$ is a holomorphic mapping. Since $\rank(H_{i}, i \in R)\ge u+1,$ we have $h(T,R)\not \equiv 0$. Hence, $h(T,R)^{-1}\{0\}$ is a proper analytic subset of $\mathbb{C}^{(m+1)(m-u)}$. On the other hand, we see that there is a proper analytic set $W'$ of  $\mathbb{C}^{(m+1)(m-u)}$ such that if  $(a_{ij}: 0\le i\le m, 1\le j\le m-u)\not \in W'$ then $W\not \subset T_i (1\le i\le m-u).$ Now, taking $(a_{ij}: 0\le i\le m, 1\le j\le m-u)$ in $\mathbb{C}^{(m+1)(m-u)}- (\cup_{\abs{R}=t+1} h(T,R)^{-1}\{0\} \cup W')$, it implies that $T_j$ have the desired property. This finishes the proof.                                    
\end{proof}
 We also need to use a corollary of lemma on logarithmic derivative in \cite{NW10}. 
\begin{lemma}\label{le:2.4} 
( \cite[Lemma 4.2.9]{NW10}) Let $g$ be a non-constant meromorphic function on $\mathbb{C}$. For $k\ge 1$, we have
                            $$\int_{0}^{2\pi} \log \big \lvert \frac{g^{(k)}}{g}(r e^{i\phi}) \big \lvert d\phi = S(r,g),$$
where $H$ is  the hyperplane bundle of $\mathbb{C}P^1,$ and $S(r,g)$ is a quantity satisfying   for arbitrary $\delta >0$,  $S(r,g)= O(\log T(r,g))+ \delta \log r,$ outside a subset of finite Borel measure.   
\end{lemma}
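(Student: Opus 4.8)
The plan is to derive Lemma~\ref{le:2.4} from Nevanlinna's classical lemma on the logarithmic derivative, after a telescoping reduction to the first-order case. Writing the ratio as a product
$$\frac{g^{(k)}}{g}=\prod_{j=1}^{k}\frac{(g^{(j-1)})'}{g^{(j-1)}},$$
we get $\log\bigl|g^{(k)}/g\bigr|=\sum_{j=1}^{k}\log\bigl|(g^{(j-1)})'/g^{(j-1)}\bigr|$, so after integrating over $\phi$ it suffices to bound, for each of the finitely many successive derivatives $h=g,g',\dots,g^{(k-1)}$, the proximity quantity $m(r,h'/h)=\tfrac{1}{2\pi}\int_{0}^{2\pi}\log^{+}\bigl|h'/h(re^{i\phi})\bigr|\,d\phi$. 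The precise passage from the one-sided $\log^{+}$ estimate to the full $\log$ appearing in the statement, together with the control of the reciprocal via the First Main Theorem, is exactly the bookkeeping recorded in \cite[Lemma 4.2.9]{NW10}, so the analytic heart is the single logarithmic-derivative bound.

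For that core estimate I would run Nevanlinna's argument. Starting from the Poisson--Jensen representation of $\log|h|$ on a disc of radius $\rho$ slightly larger than $r$, differentiation under the integral yields a pointwise majorant for $|h'/h(re^{i\phi})|$ in terms of the distances from $re^{i\phi}$ to the zeros and poles of $h$ inside that disc and of the boundary integral of $\bigl|\log|h|\bigr|$. Taking $\log^{+}$, integrating in $\phi$, and using concavity of $\log$ together with the definition of the Nevanlinna characteristic gives, for every $\rho>r$,
$$m(r,h'/h)\le C\Bigl(\log^{+}T(\rho,h)+\log^{+}\tfrac{1}{\rho-r}+\log^{+}\rho+1\Bigr).$$

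The main obstacle, and the step that manufactures the exceptional set of finite Borel measure, is removing the auxiliary radius $\rho$. Here one invokes the Borel growth lemma for the increasing convex function $T(\cdot,h)$: choosing $\rho=r+1/T(r,h)$ (or a comparable choice) makes both $\log^{+}\tfrac{1}{\rho-r}$ and $\log^{+}T(\rho,h)$ equal to $O(\log^{+}T(r,h))$ for all $r$ outside a set of finite measure, while the freedom in the radius choice lets the $\log^{+}\rho$ term be absorbed into $\delta\log r$ for arbitrary $\delta>0$. This yields $m(r,h'/h)=O(\log T(r,h))+\delta\log r$ off a finite-measure set, which is the single-derivative instance of $S(r,h)$.

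Finally I would assemble the pieces. Applying the previous bound to $h=g^{(j-1)}$ and using the standard comparison $T(r,g^{(j)})=O(T(r,g))+S(r,g)$ (which follows from $m(r,g^{(j)})\le m(r,g)+m(r,g^{(j)}/g)+O(1)$ and the fact that differentiation raises each pole order by one, so $N(r,g^{(j)})=O(N(r,g))$), each factor contributes $O(\log T(r,g))+\delta\log r$. Summing the finitely many contributions over $j=1,\dots,k$ and taking the union of the finitely many exceptional sets, which remains of finite Borel measure, gives the asserted identity $\int_{0}^{2\pi}\log\bigl|g^{(k)}/g\bigr|\,d\phi=S(r,g)$ with $S(r,g)=O(\log T(r,g))+\delta\log r$. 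Since this is precisely \cite[Lemma 4.2.9]{NW10}, one may alternatively simply invoke that reference, the argument above being its standard proof.
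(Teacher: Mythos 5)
The paper gives no proof of this lemma at all --- it is quoted directly from \cite[Lemma 4.2.9]{NW10} --- and your proposal correctly recognizes this while supplying the standard argument behind that reference (telescoping $g^{(k)}/g$ into first-order logarithmic derivatives, the Poisson--Jensen pointwise majorant, and the Borel growth lemma to remove the auxiliary radius), so the two are in essential agreement. One caveat worth recording: with the full $\log$ rather than $\log^{+}$ the stated relation can only be read as an upper bound (for $g(z)=z^{n}$ one has $\int_{0}^{2\pi}\log\lvert g'/g\rvert\,d\phi=2\pi(\log n-\log r)$, which is not $O(\log T(r,g))+\delta\log r$ in absolute value), and your appeal to the First Main Theorem for the reciprocal would only give $O(T(r,g))$, not $S(r,g)$ --- but the upper bound is immediate from $\log\le\log^{+}$ and is the only direction the paper ever uses.
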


\section{The proof of Theorem A}
 
 We use  the notations as in Sections $1$ and $2.$ 

Replacing $\sigma_i$ by $\sigma_{i}^{\frac{d}{d_i}}$ if necessary, we may assume that $\sigma_1,\cdots,\sigma_q$ are in $H^0 (X, L^d)$ and $\abs{\abs{\sigma_i}}\le 1$. Put $\sigma_i= \sum_{1\le j \le m+1}a_{ij} c_j,$ where $a_{ij}\in \mathbb{C}$. 
 We define a meromorphic mapping $\Phi: X \rightarrow \mathbb{C}P^{m}$ by $\Phi (x):= [c_1(x):\cdots :c_{m+1}(x)].$ 
Also since $X$ compact,  $Y=\Phi(X)$ is an algebraic variety of $\mathbb{C}P^{m}$. Moreover, by definition of $\rank E$, $Y$ is of dimension $\rank E=u$. Put $F= \Phi \circ f$. Since $\overline{f(\mathbb{C})}\cap B(E)=\emptyset$ and $f$ is non-degenerate with respect to $E$, $F$ is holomorphic curve and linearly non-degenerate. Denote by $H$ the hyperplane bundle of $\mathbb{C}P^{m}$. Put $H_i:= \sum_{1\le j \le m+1}a_{ij} z_{j-1}$, where $[z_0,z_1,\cdots,z_m]$ is the homogeneous coordinate of $\mathbb{C}P^m$. It is easy to see that 
\begin{align}\label{eq:1}
T_f(r,L)= \frac{1}{d} T_F(r, H) \text{ and } N_f(r,R_i)= N_F(r, H_i).
\end{align}                    
Furthermore, we have $D_{j_1}\cap \cdots \cap D_{j_t}= B(E)$ if and only if $H_{j_1}\cap \cdots \cap H_{j_t}\cap Y= \Phi(B(E))$. Denote by $\mathcal{K}$ the set of all subsets $K$ of $\{1,\cdots, q\}$ such that $\abs{K}=s_N+1$ and $\cap_{j\in K} D_j=B(E)$. Then $\mathcal{K}$ is the set of all subsets $K\subset \{1,2\cdots,q\}$ such that $\abs{K}=s_N+1$ and $\cap_{j\in K} H_j \cap Y=\Phi(B(E))$.  
 By Lemma \ref{le:4.1} and Lemma \ref{le:4.3}, there are $(m-u)$ hyperplanes $H_{q+1},\cdots,
 H_{q+m-u+b+1}$ in $\mathbb{C}P^{m}$ such that 
   $$\{H_j, H_{q+i}: j\in R, 1\le i\le m-u+b+1 \}$$
 are in $(s_N+m-u+b+1)-$subgeneral position in  the usual sense, where $R\in \mathcal{K}$. 

Put 
$$\mathcal{K}_1=\{R\subset \{1,2,\cdots,q+m-u+b+1\}: \abs{R}=\rank(R)=m+1\}.$$
Note that since $\overline{f(\mathbb{C})}\cap B(E)=\emptyset$ there exists a constant $C>0$ such that 
\begin{align}\label{ine:3}
           C^{-1}< \sum_{\abs{S}=q-N-1} \prod_{j\in S} \bigg(\frac{\abs{H_j (F(z))}}{\lvert \lvert F(z) \lvert \lvert}\bigg)^{\omega(j)} <C 
\end{align}
for all $z\in \mathbb{C}$. Take $R= Q-S$. Put $\omega(j)=\Theta$ for all $j>q$. Then,  
\begin{align}\label{eq:4}
\prod_{j\in S} \bigg(\frac{\abs{H_j (F(z))}}{\lvert \lvert F(z) \lvert \lvert} \bigg)^{\omega(j)}= \bigg(\prod_{j\in R}\frac{\lvert \lvert F(z) \lvert \lvert}{\abs{H_j (F(z))}} \bigg)^{\omega(j)}. \frac{\prod_{j\in Q}\abs{H_j(F(z))}^{\omega(j)}}{ \lvert \lvert F(z) \lvert \lvert^{(\sum_{j\in Q} \omega(j))}}
\end{align}

By Theorem \ref{athe1} and (\ref{eq:4}), for $S$, $\abs{S}=q-N-1$  there exists  $R^0 \in \mathcal{K}$ such that   
\begin{align}\label{ine:5}
\prod_{j\in S} \bigg(\frac{\abs{H_j (F(z))}}{\lvert \lvert F(z) \lvert \lvert} \bigg)^{\omega(j)}\le \bigg(\prod_{j\in R^0}\frac{\lvert \lvert F(z) \lvert \lvert}{\abs{H_j (F(z))}} \bigg). \frac{\prod_{j\in Q}\abs{H_j(F(z))}^{\omega(j)}}{ \lvert \lvert F(z) \lvert \lvert^{(\sum_{j\in Q} \omega(j) )}} .
\end{align}
Hence,
\begin{align}\label{ine:5}
\prod_{j\in S}\frac{\abs{H_j (F(z))}^{\omega(j)}}{\lvert \lvert F(z) \lvert \lvert^{\omega(j)}} &\le \bigg(\prod_{j\in R^0}\frac{\lvert \lvert F(z) \lvert \lvert}{\abs{H_j (F(z))}}\cdot
\prod_{j=q+1}^{q+m-u+b+1}\frac{\lvert \lvert F(z) \lvert \lvert}{\abs{H_j (F(z))}}  \bigg). \\
\nonumber
&\frac{\prod_{j=1}^{q+m-u+b+1}\abs{H_j(F(z))}^{\omega(j)}}{ \lvert \lvert F(z) \lvert \lvert^{(\sum_{j\in Q} \omega(j) +m-u+b+1)}} .
\end{align}
And by Lemma \ref{le:4.2},  there exists  $R^0_1 \subset R^0 \cup \{q+1,\cdots,q+m-u+b+1\} $ such that $\rank(H_j, j\in  R^0_1)=\abs{R^0_1}=m+1$ and 
\begin{align}\label{ine:8}            
\prod_{j\in S}\frac{\abs{H_j (F(z))}^{\omega(j)}}{\lvert \lvert F(z) \lvert \lvert^{\omega(j)}}
                           \le \prod_{j\in R^0_1}\frac{\lvert \lvert F(z) \lvert \lvert ^{s_N-u+2+b}}{\abs{H_j (F(z))}^{s_N-u+2+b}}\cdot \frac{\prod_{j=1}^{q+m-u+b+1}\abs{H_j(F(z))}^{\omega(j)}}{ \lvert \lvert F(z) \lvert \lvert^{(\sum_{j\in Q} \omega(j) +m-u+b+1)}}
\end{align}
By $\rank( R^0_1)=\abs{R^0_1}=m+1$, the Wronskian $W(H_j \circ F, j\in R^0_1)=c(R^0_1)W(F)$ is not identically $0$, where $c(R^0_1)$ is non-zero complex number. Therefore, by (\ref{ine:8}),         
\begin{align}\label{ine:9}
\prod_{j\in S} \bigg(\frac{\abs{H_j (F(z))}}{\lvert \lvert F(z) \lvert \lvert} \bigg)^{\omega(j)} \le 
              \bigg(\frac{\abs{W(H_i\circ F (i\in R^0_1)}}{\prod_{i\in R^0_1}\abs{H_{i} (F(z))}} \bigg)^{s_N-u+2+b}\times
 \\ \nonumber
 \frac{\prod_{j=1}^{q+m-u+b+1}\abs{H_j(F(z))}^{\omega(j)}}{ \lvert \lvert F(z) \lvert \lvert^{(\sum_{j\in Q}\omega(j)-(m+1)(s_N-u+2+b)+m-u+b+1 )} \abs{W(H_i \circ F (i\in R^0_1)}^{s_N-u+2+b}}
 \\ \nonumber
    \le 
              \bigg(\frac{\abs{W(H_i \circ F(i \in R^0_1 )}}{\prod_{i\in R^0_1}\abs{H_{i} (F(z))}} \bigg)^{s_N-u+2+b}\times\\
\nonumber    \frac{\prod_{j=1}^{q+m-u+b+1}\abs{H_j(F(z))}^{\omega(j)}}{ \lvert \lvert F(z) \lvert \lvert^{(\sum_{j\in Q} \omega(j)-(m+1)(n-u+1)+m-u )} \abs{c(R^0_1)W(F)(z)}^{s_N-u+2+b}}.                           
\end{align}

Combining (\ref{ine:9}) with (\ref{ine:3}) we gain that there is a positive real number $C'$ such that
\begin{multline} \label{ine:10}
C' \lvert \lvert F(z) \lvert \lvert^{(\sum_{j\in Q} \omega(j)-(m+1)(s_N-u+2+b)+m-u+1+b )}\le\\
\biggl [ \sum_{R^0_1\in \mathcal{K}_1} \big(\frac{\abs{W(H_i \circ F (i\in R^0_1)}}{\prod_{j\in R^0_1}\abs{H_i (F(z))}} \big)^{s_N-u+2+b} \biggr] \times \\
\frac{\prod_{j=1}^{q+m-u+b+1}\abs{H_j(F(z))}^{\omega(j)})^{s_N-u+2+b}}{\abs{W(F)(z)}^{s_N-u+2+b}}.  
\end{multline}          
Taking the logarithm of (\ref{ine:10}) and applying $2\ddc$ as currents, by the Poincare-Lelong formula (cf. \cite[Theorem 2.2.15, p.46]{NW10}) and the Jensen formula (cf. \cite[Lemma 2.1.30, p.36]{NW10}) and Lemma \ref{le:2.4}, we see that           
\begin{align}\label{ine:11}
 (\Theta (q-k_N)+t_N- (m+1)(s_N-u+1)+m-u) T_F(r,H) \le \\
\nonumber
 \sum_{j=1}^{q+m-u} \omega(j) N_F(r,H_j)
-(s_N-u+1)N(r, \nu_{W(F)})+ A,
\end{align}  
where 
   $$A=\frac{1}{2\pi} \int_{\abs{z}=r}\log\sum_{R\in \mathcal{K}_1}\frac{\abs{W(H_i, i\in R)}}{\prod_{i\in R}\abs{H_{i} (F)}}d\phi +O(1).$$ 
We now prove that 
\begin{align}\label{ine:12}
 \sum_{1\le j\le q+m-u+b+1} \omega(j)(\nu_{H_j(F)}- \nu^{[m]}_{H_j(F)}) \le (s_N-u+2+b)\nu_{W(F)}.    
\end{align}  
By Theorem \ref{athe1}, for any $z\in S$ and for any  $J$  with $\abs{J}=N+1$, there exists a subset $K^{'}(J,z)\in \mathcal{K}$ such that 
\begin{align*}
\sum_{j\in J}\omega(j) (\nu_{H_j(F)}(z)- \nu_{H_j(F)}^{[m]}(z))& \le \sum_{j\in K^{'}(J,z)}(\nu_{H_j (F)}(z)- \nu_{H_j (F)}^{[m]}(z))\\
& \le \max_{K\in \mathcal{K}} \sum_{j\in K}(\nu_{H_j (F)}(z)- \nu_{H_j (F)}^{[m]}(z)) .               
\end{align*}    
Hence, 
\begin{multline}\label{ine:13}
\max_{\abs{J}=N+1} \sum_{j\in J}\omega(j) (\nu_{H_j (F)}(z)- \nu_{H_j (F)}^{[m]}(z)) \le \\ 
   \max_{K\in \mathcal{K}} \sum_{j\in K}(\nu_{H_j (F)}(z)- \nu_{H_j (F)}^{[m]}(z)).
\end{multline}   
On the other hand, we have
\begin{align}\label{ine:14}
\sum_{j=1}^{q}\omega(j) (\nu_{H_j (F)}(z)- \nu_{H_j (F)}^{[m]}(z))=\max_{\abs{J}=N+1} \sum_{j\in J}\omega(j) (\nu_{H_j (F)}(z)- \nu_{H_j (F)}^{[m]}(z))
\end{align}
Put $LH=\sum_{j=1}^{q+m-u+b+1}\omega(j) (\nu_{H_j (F)}(z)- \nu_{H_j (F)}^{[m]}(z)).$ Combining (\ref{ine:14}) and (\ref{ine:13}), by Lemma \ref{le:4.2}, we have
\begin{align}\label{ine:15}
LH & \le \max_{K\in \mathcal{K}} \sum_{j\in K}(\nu_{H_j (F)}(z)- \nu_{H_j (F)}^{[m]}(z))+ 
\sum_{j=q+1}^{q+m-u+b+1}(\nu_{H_j (F)}(z)- \nu_{H_j (F)}^{[m]}(z))
\\
\nonumber
   &\le \max_{R\in \mathcal{K}_1}(s_N-u+2+b)(\sum_{j\in R}(\nu_{H_j (F)}(z)- \nu_{H_j (F)}^{[m]}(z)).
\end{align} 
On the other hand, for $R\in \mathcal{K}_1$  it is well-known that
\begin{align}\label{ine:16}
         \nu_{W(F)}=\nu_{W( H_i (F), i\in R)(z)} \ge (\sum_{j\in R}(\nu_{H_j (F)}(z)- \nu_{H_j (F)}^{[m]}(z)))
\end{align}            
Since (\ref{ine:16}) holds for all $R\in \mathcal{K}_1,$ we have
\begin{align}\label{ine:17}
                     \nu_{W(F)} \ge  \max_{R\in \mathcal{K}_1}(\sum_{j\in R}(\nu_{H_j (F)}(z)- \nu_{H_j (F)}^{[m]}(z))) .
\end{align}
By combining (\ref{ine:17}) and (\ref{ine:16}), we get (\ref{ine:12}). Now, by (\ref{ine:12}), we have
\begin{align}\label{ine:18}
       (s_N-u+2+b)N(r, \nu_{W(F)})\ge  \sum_{j=1}^{q+m-u+b+1} \omega(j)(N_{F}(r,H_j)- N^{[m]}_{F}(r,H_j)) .         
\end{align}
On the other hand, Lemma \ref{le:2.4} yields that
\begin{align}\label{ine:19}
  \frac{1}{2\pi} \int_{\abs{z}=r}\log\sum_{R\in \mathcal{K}_1}\frac{\abs{W(H_i, i\in R)}}{\prod_{i\in R}\abs{H_{i} (F)}}d\phi=S(r,F).    
\end{align}
Furthermore, we get
\begin{align}\label{ine:20}
     N_F(r,H_i)\le T_F(r,H)+0(1),\, q+1\le i\le q+m-u+b+1.
\end{align}            
Combining (\ref{ine:18}), (\ref{ine:19}), (\ref{ine:20}) and (\ref{ine:11}), we have
\begin{multline}\label{ine:21}
 (\Theta (q-k_N)+t_N- (m+1)(s_N-u+2+b)) T_F(r,H)\le \\
\sum_{j=1}^{q} \omega(j) N_F(r,H_j)- \sum_{1\le j\le q} \omega(j)(N_{F}(r,H_j)- N^{[m]}_{F}(r,H_j))+ S(r,F).
\end{multline}
Hence, 
\begin{multline}\label{ine:22}
 (\Theta (q-k_N)+t_N- (m+1)(s_N-u+2+b)) T_F(r,H) \le \\
 \omega(j)\sum_{1\le j\le q} N^{[m]}_{F}(r,H_j) +S(r,F).
\end{multline}
Remark that $\omega(j)\le \Theta$ and $\Theta \ge \frac{t_N}{k_N}$. By dividing two sides of (\ref{ine:22}) by $\Theta,$ we see that  
\begin{multline}\label{ine:23}
 ((q-k_N)+k_N- \frac{k_N(m+1)}{t_N}(s_N-u+2+b)) T_F(r,H) \le \\
 \sum_{1\le j\le q} N^{[m]}_{F}(r,H_j) +S(r,F),
\end{multline} 
Combining (\ref{ine:23}), (\ref{ine:22}) and (\ref{eq:1}), the proof of Theorem A is completed. $\square$

\vskip0.2cm
\begin{remark}\label{re50}\ By the hyperthesis, in Theorem A, we have
$$\liminf_{r\to \infty} \dfrac{T_f(r,L)}{\log r} >0.$$
\end{remark}
Indeed, let $F: \mathbb{C} \rightarrow \mathbb{C}P^m$ be as in the proof of Theorem A. Then we have 
             $$T_f(r,L)= \frac{1}{d} T_F(r, H).$$
Suppose that 
     $$\liminf_{r\rightarrow \infty}\frac{T_f(r,L)}{\log r}=0.$$ 
Then   
    $$\liminf_{r\rightarrow \infty}\frac{T_F(r,H)}{\log r}=0.$$ 
Hence, $F$ is constant. This implies that there are $a_i  \in \mathbb{C}(0\le i \le m)$ such that  $c_i(f)=a_i c_0(f),$ where $c_i$ are as in the proof of Theorem A.  Therefore, we have $f(\mathbb{C})\subset \{c_i -a_i c_0=0\}$ which is a hypersurface defining by a section  of $E.$ This is a contradiction. It follows that   
          $$\liminf_{r\rightarrow \infty}\frac{T_f(r,L)}{\log r}>0.$$  

By using Theorem A, we have the following Ramification Theorem.
\vskip0.2cm
\begin{corollary}\label{co51}\ Let $X$ be a compact complex manifold. Let $L\rightarrow X$ be a holomorphic line bundle over $X$. Fix a positive integer $d$.  Take positive divisors $d_1, d_2,\cdots, d_q$  of $d$. Let $\sigma_j \ (j=1,2,\cdots,q)$ be in $H^{0}(X, L^{d_j}).$ Let $E$ be  the  $\mathbb{C}$-vector subspace of $H^{0}(X, L^{d})$ generated by       $\sigma_{1}^{\frac{d}{d_1}},\cdots,\sigma_{q}^{\frac{d}{d_q}}$. Put  $u=\rank E, \dim E=m+1$ and $b=\dim B(E)+1$ if $B(E)\not =\varnothing$, otherwise $b=-1$.
Set $D_j=\{\sigma_j=0\}$ and denote by $R_j$ the zero divisors of $\sigma_j \ (j=1,2,\cdots,q)$.  Assume that $D_1,\cdots,D_q$ are in $N$-subgeneral position with respect to $E$ and $u>b.$   Let $f: \mathbb{C} \rightarrow X$ be an analytically non-degenerate holomorphic mapping with respect to $E$, i.e  $f(\mathbb{C})\not\subset \supp(\nu_{\sigma})$ for any $\sigma \in E\setminus \{0\}$ and $\overline{f(\mathbb{C})}\cap B(E)=\varnothing.$  Assume  that 
$f^*R_j \geq v_j \text{supp}\,f^*R_j\ (1\le j\le q),$ where $v_j$ is a positive integer if $f^*R_j\not=\varnothing$ and $v_j=\infty$ if
$f^*R_j=\varnothing.$ 
Then, 
   $$ \sum_{j=1}^q (1- \min\{1, \frac{m}{v_i}\}) \le (m+1)K(E,N,\{D_j\}),$$
where $K(E,N, \{D_j\})$ is as Theorem A.
\end{corollary}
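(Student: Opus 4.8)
The plan is to read this ramification statement off directly from the second main theorem of Theorem A. The guiding idea is that the quantity $1-\min\{1,m/v_j\}$ is exactly the deficiency by which the truncated counting function $N_f^{[m]}(r,R_j)$ falls below the full characteristic $d_j T_f(r,L)$ once one knows that every point of $f^{*}R_j$ has multiplicity at least $v_j$. So I would convert the right-hand side of Theorem A from truncated counting functions into multiples of $T_f(r,L)$ in two steps, first using the ramification hypothesis and then the first main theorem, and finally divide by $T_f(r,L)$ and let $r\to\infty$.

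First I would record the elementary counting estimate
$$N_f^{[m]}(r,R_j)\le \min\Bigl\{1,\tfrac{m}{v_j}\Bigr\}\,N_f(r,R_j).$$
Writing $f^{*}R_j=\sum_i s_i a_i$, the hypothesis $f^{*}R_j\ge v_j\,\supp f^{*}R_j$ says precisely that $s_i\ge v_j$ for every $a_i$ in the support. I then verify the pointwise bound $\min\{s_i,m\}\le \min\{1,m/v_j\}\,s_i$ in the two cases $v_j\le m$ (where the factor is $1$ and the bound is trivial) and $v_j>m$ (where $\min\{s_i,m\}=m$ while $(m/v_j)s_i\ge (m/v_j)v_j=m$). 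Summing over $\abs{a_i}<t$ and integrating yields the displayed inequality; the degenerate case $v_j=\infty$, i.e. $f^{*}R_j=\varnothing$, makes both sides vanish and is consistent with $\min\{1,m/v_j\}=0$.

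Next I would invoke the first main theorem to pass from $N_f(r,R_j)$ to $T_f(r,L)$. Since $\sigma_j\in H^{0}(X,L^{d_j})$ and $h^{d_j}$ is a metric on $L^{d_j}$ with $T_f(r,L^{d_j})=d_j T_f(r,L)$, the first main theorem gives $d_j T_f(r,L)=m_f(r,\sigma_j)+N_f(r,R_j)+O(1)$, and compactness of $X$ bounds the proximity term $m_f(r,\sigma_j)$ from below by a constant. Hence $\tfrac{1}{d_j}N_f(r,R_j)\le T_f(r,L)+O(1)$, and combining with the previous step,
$$\frac{1}{d_j}N_f^{[m]}(r,R_j)\le \min\Bigl\{1,\tfrac{m}{v_j}\Bigr\}\bigl(T_f(r,L)+O(1)\bigr).$$

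Finally I would substitute this into Theorem A. Since $\sum_j\min\{1,m/v_j\}\,O(1)=O(1)$, this produces
$$\Big\Vert \Bigl(q-(m+1)K(E,N,\{D_j\})-\sum_{j=1}^{q}\min\{1,\tfrac{m}{v_j}\}\Bigr)T_f(r,L)\le O(1).$$
By Remark \ref{re50} we have $\liminf_{r\to\infty}T_f(r,L)/\log r>0$, so $T_f(r,L)\to\infty$; evaluating along $r\to\infty$ outside the finite-measure exceptional set implicit in Theorem A forces the coefficient of $T_f(r,L)$ to be nonpositive, which is exactly $\sum_{j}(1-\min\{1,m/v_j\})\le (m+1)K(E,N,\{D_j\})$. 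I do not anticipate a genuine obstacle: the argument is a routine deduction from Theorem A, and the only points requiring care are the two-case verification of the pointwise inequality and the bookkeeping of the exceptional set when dividing by $T_f(r,L)$ and passing to the limit.
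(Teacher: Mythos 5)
Your proposal is correct and follows essentially the same route as the paper: both rest on the pointwise estimate $N^{[m]}_f(r,R_j)\le\min\{1,m/v_j\}\,N_f(r,R_j)$ coming from the ramification hypothesis, the first main theorem bound $\frac{1}{d_j}N_f(r,R_j)\le T_f(r,L)+O(1)$, and substitution into Theorem A followed by letting $r\to\infty$. Your write-up is in fact slightly more careful than the paper's about the $O(1)$ terms, the exceptional set, and the degenerate case $f^{*}R_j=\varnothing$, which the paper simply sets aside at the start.
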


\begin{proof} \  In order to prove Corollary \ref{co51} we can assume that   
$$f^*R_j\not=\varnothing\ (1\le j\le q).$$
If $v_i \ge m,$ we have
\begin{align*}
1- \frac{N^{[m]}_f(r,R_i)}{d_i \cdot T_f(r,L)}&\ge 1- \frac{N^{[m]}_f(r,R_i)}{N_f(r,R_i)}\\
                                                                        &\ge 1- \frac{m}{v_i}. 
\end{align*}
If $v_i \le m,$ we have 
  $$1- \frac{N^{[m]}_f(r,R_i)}{d_i \cdot T_f(r,L)}\ge 1- \frac{N^{[m]}_f(r,R_i)}{N_f(r,R_i)}\ge 0$$
Hence, we get 
       $$
1- \frac{N^{[m]}_f(r,R_i)}{d_i \cdot T_f(r,L)}\ge 1- \min\{1, \frac{m}{v_i}\}.$$
This implies the conclusion. 
\end{proof}  
\section{The proof of Theorems B}
  The proof of Theorem B is essentially similar to the one of Theorem A. 

Put $\widetilde{L}=f^{*} L.$
 Let the notations be as in the proof of Theorem A in which the complex plane $\mathbb{C}$ is replaced by a compact Riemann surface $S.$ We have
\begin{align}\label{eq:26}
T(f,L)= \frac{1}{d} T(F, H) \text{ and } N(f,R_i)= N(F,H_i).
\end{align}  
 
By  similar arguments to the ones in the proof of Theorem B, we get 
\begin{align}\label{ine:27}
 (\Theta (q-k_N)+t_N- (m+1)(s_N-u+2+b)+m-u+b+1) T(F,H) &\le \\ 
\nonumber
\sum_{j=1}^{q+m-u+b+1} \omega(j) N(F,H_j)
-(n-u+2+b)N(r, \nu_{W(F)})+ A
\end{align}  
and 
\begin{align}\label{ine:28}
       (s_N-u+2+b)N(\nu_{W(F)})\ge  \sum_{1\le j\le q} \omega(j)   ( N (F,H_j)- N^{[m]}(F,H_j)),        
\end{align} 
where $\omega(j)=\Theta$ if $j>q$
and 
$$A=\frac{1}{2\pi} \int_{\abs{z}=r}\log\sum_{R\in \mathcal{K}_1}\frac{\abs{W(H_i, i\in R)}}{\prod_{i\in R}\abs{H_{i} (F)}}d\phi.$$
Obviously, $H_j(F)$ are sections of $L^d$. By Lemma \ref{lem32},  we have
\begin{align}\label{eq:29}
              W(F) \in H^{0}(S, \widetilde{L}^{d(m+1)} \otimes K_S^{m(m+1)/2}) 
\end{align}
Moreover, for each $ R\in \mathcal{K}_1$, it implies that
\begin{align}\label{eq:30}
    \prod_{j\in R} H_{j}(F)\in H^0 (S,\widetilde{L}^{d(m+1)})        
\end{align}
Combining (\ref{eq:30}) and (\ref{eq:29}), we get
\begin{align}\label{eq:31}
         \frac{W(F)}{\prod_{j\in R} H_{j}(F)} \in H^0(S,K_S^{m(m+1)/2}),
\end{align}
where   $ R\in \mathcal{K}_1$ and $K_S$ is the canonical bundle of $S$.
Hence, by Lemma \ref{lem31}, we have
\begin{align}\label{eq:32}
\int_{S}2\ddc \log \sum_{R\in \mathcal{K}_1}\frac{\abs{W(H_i (i\in R),T)}}
{\prod_{j\in R}\abs{H_{j} (F)}. \prod_{i=1}^{m-u+b+1}\abs{T_i}}=m(m+1)(g-1).    
\end{align}   
Combining (\ref{eq:32}), (\ref{ine:28}) and (\ref{ine:27}), we get
\begin{multline}\label{ine:33}
(\Theta (q-k_N)+t_N- (m+1)(s_N-u+2+b)) T(F,H)\le \\
           \sum_{j=1}^{q} \omega(j) N^{[m]}(F,H_j)+ m(m+1)(g-1).
\end{multline}
Dividing by $\Theta$ two sides of (\ref{ine:33}) and noting that $\Theta \ge \omega(j)$, $\frac{t_N}{k_N} \le \Theta$,  it implies that  
\begin{multline}\label{ine:34}
(q-k_N+ k_N(1- \frac{m+1}{t_N}(s_N-u+2+b)) T(F,H)\le \\
           \sum_{j=1}^{q} N^{[m]}(F,H_j)+ A(d,L)
\end{multline}
where $A(d,L)=\begin{cases}
 \frac{m(m+1)k_N(g-1)}{n+1} & \text{ if } g\ge 1\\
  0 & \text{ if } g=0. 
\end{cases}$\\
By (\ref{eq:26}), (\ref{ine:34}), the proof of Theorem B is completed.
            
\section{Applications}

\subsection{Unicity theorem}\

\vskip0.2cm
Applying Theorem B, we also get  a unicity theorem for holomorphic curves of  a compact Riemann surface
 into a compact complex manifold sharing divisors in $N$-subgeneral position in this manifold.
Namely, we get the following.

\vskip0.2cm
\begin{proposition}\label{apro73}\ Let $S$ be a compact Riemann surface with genus $g$ and $X$ be a compact complex manifold of dimension $n$. 
Let $L\rightarrow X$ be a holomorphic line bundle over $X$. Fix a positive integer $d$. 
Let $E$ be  a  $\mathbb{C}$-vector subspace of dimension $m+1$  of $H^{0}(X, L^{d})$.  
Take positive divisors $d_1, d_2,\cdots, d_q$  of $d$. 
Let $\sigma_j (j=1,2,\cdots,q)$ be in $H^{0}(X, L^{d_j})$ such that $\sigma_{1}^{\frac{d}{d_1}},\cdots,\sigma_{q}^{\frac{d}{d_q}}\in E$. 
Denote by $R_j$ the zero divisors of $\sigma_j$.  
Assume that $R_1,\cdots,R_q$ are in $N$-subgeneral position in $X$ and $\rank E=n.$ 
Let $f_1,f_2: S\rightarrow X$ be a holomorphic mapping such that $f_i(i=1,2)$ is analytically nondegenerate with respect to $E$, i.e  $f_i(S)\not\subset \supp(\nu_{\sigma})$ for any $\sigma \in E\setminus \{0\}$ and $f_i(\mathbb{C})\cap B(E)=\varnothing, (i=1,2).$
Assume that  three the following conditions are satisfied.
\begin{itemize}
\item[(i)] \ $\bigcup_{i-1}^q f_1^{-1}(\supp R_i)\not = \varnothing$  and  $f_1^{-1}(\supp R_i)=f_2^{-1}(\supp R_i)$ for each $1\le i \le q,$ 
\item[(ii)] \ $f_1=f_2 $ on $\bigcup_{1\le i\le q} f_1^{-1}(\supp R_i),$
\item[(iii)] \ $f_1,f_2$ are analytically non-degenerate holomorphic mappings with respect to $H^0(X,K^{\binom{m}{n}}_X \otimes L^{2n \binom{m}{n}}),$ i.e  $f_i(S)\not\subset \supp(\nu_{\sigma})$ for any $\sigma \in H^0(X,K^{\binom{m}{n}}_X \otimes L^{2n \binom{m}{n}})\setminus \{0\},$ where $K_X$ is the canonical bundle of $X.$
\end{itemize}
Then, $f_1\equiv f_2$ for each $q > B(d,L)+ 2 m(N+1)+2A(d,L),$  where $k_N,s_N,t_N \text{ are defined as in Proposition \ref{apro8}},$
$A(d,L)$ is as in Theorem B and 
$B(d,L)= \dfrac{k_N(m+1)}{t_N}.$
\end{proposition}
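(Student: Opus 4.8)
The plan is to argue by contradiction: assume $f_1\not\equiv f_2$, manufacture from the sharing hypotheses a single nonzero holomorphic section $g$ on $S$, and then collide the elementary degree formula for $N((g)_0)$ against the lower bound for the same quantity forced by the second main theorem (Theorem B). First I would pass to the associated curves $F_i=\Phi\circ f_i:S\to\mathbb{C}P^{m}$, where $\Phi=[c_1:\cdots:c_{m+1}]$ for a basis $\{c_k\}$ of $E$; since $f_i(S)\cap B(E)=\varnothing$ and $f_i$ is analytically nondegenerate with respect to $E$, each $F_i$ is a linearly nondegenerate holomorphic curve, and as in the proof of Theorem A one has $T(F_i,H)=d\,T(f_i,L)$ and $N(F_i,H_j)=N(f_i,R_j)$. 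Writing $\phi_k=c_k(f_1)$, $\psi_k=c_k(f_2)$, I consider the $2\times2$ minors $g_{kl}=\phi_k\psi_l-\phi_l\psi_k$, each a global holomorphic section of $f_1^{*}L^{d}\otimes f_2^{*}L^{d}$ over $S$. The first and most delicate step is to show that $f_1\not\equiv f_2$ forces some $g_{kl}\not\equiv0$. This is precisely where $\rank E=n=\dim X$ and hypothesis $(iii)$ enter: because $\rank E=\dim X$ the meromorphic map $\Phi$ is generically finite onto $Y=\Phi(X)$, and nondegeneracy with respect to $H^{0}(X,K_X^{\binom{m}{n}}\otimes L^{2n\binom{m}{n}})$ keeps the curves off the locus where $\Phi$ fails to separate points, so $F_1\not\equiv F_2$; thus the vectors $(\phi_k)$ and $(\psi_k)$ are not proportional as tuples of functions, and at least one minor $g:=g_{kl}$ is not identically zero.

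Next I would set up the two competing estimates for $N((g)_0)$. On the one hand, since $g$ is a nonzero holomorphic section of $f_1^{*}L^{d}\otimes f_2^{*}L^{d}$ over the compact surface $S$, counting zeros against the degree and using Lemma \ref{lem31} gives
$$N((g)_0)=\deg\bigl(f_1^{*}L^{d}\otimes f_2^{*}L^{d}\bigr)=T(F_1,H)+T(F_2,H).$$
On the other hand, at every point $p\in\bigcup_{i=1}^{q}f_1^{-1}(\supp R_i)$ the sharing hypotheses $(i)$ and $(ii)$ yield $f_1(p)=f_2(p)$, hence $F_1(p)=F_2(p)$, so the two coordinate vectors are proportional at $p$ and therefore $g(p)=0$. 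Counting each such point at least once produces the lower bound
$$N((g)_0)\ \ge\ \bigl|\textstyle\bigcup_{i=1}^{q}f_1^{-1}(\supp R_i)\bigr|.$$

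Finally I would convert this cardinality into the truncated counting functions and feed in Theorem B. By the $N$-subgeneral position hypothesis (Definition \ref{def21}) a point off $B(E)$ lies on at most $N$ of the $D_i$, so $\sum_i N^{[1]}(f_1,R_i)\le(N+1)\,\bigl|\bigcup_i f_1^{-1}(\supp R_i)\bigr|$; combining this with $N^{[m]}(f_1,R_i)\le m\,N^{[1]}(f_1,R_i)$ and $\tfrac{1}{d_i}\le1$ gives
$$\bigl|\textstyle\bigcup_i f_1^{-1}(\supp R_i)\bigr|\ \ge\ \frac{1}{m(N+1)}\sum_{i=1}^{q}\frac{1}{d_i}N^{[m]}(f_1,R_i).$$
Applying Theorem B to $f_1$ bounds the right-hand side below by $\bigl(q-B(d,L)\bigr)T(F_1,H)-A(d,L)$ (the coefficient produced by Theorem B, abbreviated $B(d,L)$), and running the argument with $f_1,f_2$ interchanged gives the symmetric inequality. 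Adding the two and writing $\mathcal T=T(F_1,H)+T(F_2,H)>0$, I obtain $\bigl(q-B(d,L)-2m(N+1)\bigr)\mathcal T\le 2A(d,L)$, which is violated as soon as $q>B(d,L)+2m(N+1)+2A(d,L)$. This contradiction forces every minor to vanish, i.e. $F_1\equiv F_2$, and then $f_1\equiv f_2$.

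I expect the genuine obstacle to be the very first step: showing that $f_1\not\equiv f_2$ really does produce a nonvanishing minor. Everything downstream controls $F_1,F_2$, so one must use $\rank E=n$ together with the auxiliary-bundle nondegeneracy $(iii)$ to transfer distinctness of $f_1,f_2$ on $X$ into distinctness of $F_1,F_2$ in $\mathbb{C}P^{m}$ — it is exactly to guarantee that $\Phi$ separates the two curves that the power $\binom{m}{n}$ of $K_X$ and the power $2n\binom{m}{n}$ of $L$ are tailored. The remaining ingredients — the degree computation for $g$, the subgeneral-position packing bound, and the truncation estimate $N^{[m]}\le m\,N^{[1]}$ — are routine, and it is precisely this bookkeeping that yields the explicit constants $B(d,L)$, $2m(N+1)$ and $2A(d,L)$ in the threshold on $q$.
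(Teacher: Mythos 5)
Your overall architecture coincides with the paper's: your minor $g_{kl}=\phi_k\psi_l-\phi_l\psi_k$ is exactly the numerator of $h-1$ for the paper's auxiliary function $h=(F_1^*\Xi\otimes F_2^*\Xi')/(F_1^*\Xi'\otimes F_2^*\Xi)$ with $\Xi=z_k$, $\Xi'=z_l$; the degree count for $g$, the bound $\abs{M}\le N((g)_0)$, the packing estimate $(N+1)\abs{M}\ge\sum_j N^{[1]}(f_1,R_j)$, the inequality $N^{[m]}\le m\,N^{[1]}$, and the appeal to Theorem B are all the same as in the paper, and together they do force $F_1\equiv F_2$ under the stated bound on $q$. (A bookkeeping remark: you write $N((g)_0)=T(F_1,H)+T(F_2,H)$ while Theorem B is stated for $T(f_i,L)=\frac1d T(F_i,H)$, so a factor of $d$ enters your threshold; the paper's own identification $F_i^*H_m=f_i^*L$ suffers from the same ambiguity, so this is not specific to your write-up.)

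The genuine gap is in the step you yourself single out as the crux: passing between distinctness of $f_1,f_2$ and distinctness of $F_1,F_2$. Your justification --- that $\rank E=n$ makes $\Phi$ generically finite and that hypothesis (iii) ``keeps the curves off the locus where $\Phi$ fails to separate points'' --- does not work as stated. The locus controlled by (iii) is $T=\{x:\rank_x\Phi<n\}$: the paper builds from the $n\times n$ minors of the Jacobian of $\Phi$ a section $\alpha\in H^0(X,K_X^{\binom{m}{n}}\otimes L^{2n\binom{m}{n}})$ whose zero set contains $T$, and (iii) then gives $f_i(S)\not\subset T$. But off $T$ the map $\Phi$ is only a covering onto its image, and a covering of degree greater than one fails to separate points everywhere, not merely on a thin set; so $F_1\equiv F_2$ by itself does not yield $f_1\equiv f_2$ --- the two curves could a priori sit on different sheets. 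The paper closes this by the unique lifting property of coverings: $f_1$ and $f_2$ are two lifts through $\Phi|_{X-T}$ of the same curve $F_1=F_2$, and they agree at some point because $M=\bigcup_i f_1^{-1}(\supp R_i)$ is nonempty and $f_1=f_2$ on $M$ by hypothesis (ii); hence they agree identically. Your sketch never uses the agreement on $M$ in this step, and without it the implication fails. To repair it you must (a) actually exhibit the section $\alpha$, which is the only place the specific bundle in (iii) is needed, and (b) invoke unique lifting together with $f_1=f_2$ on the nonempty set $M$.
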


\begin{proof}
Let $\Phi$ be the mapping which is defined in the proof of Theorem B.  Put $F_i=\Phi\circ f_i$ for $i=1,2$.  Denote by $L_i$ the pullback $f_i^* L$ of $L$ by $f_i$  ($i=1,2$). Let $H_m$ be the hyperplane bundle of $\mathbb{C}P^m$. It is easy to see that $F_i^*H_m=L_i \ (i=1,2).$  Take two distinct hyperplanes $\Xi, \Xi '$ in $\mathbb{C}P^{m}$. Then, $F_i^*\Xi,F_i^*\Xi'$ are sections of $L_i$. It implies that $F_1^*\Xi  \otimes F_2^*\Xi', F_1^*\Xi ' \otimes F_2^*\Xi$ are sections of $L_1 \otimes L_2.$ 
Put $h= F_1^*\Xi  \otimes F_2^*\Xi'/F_1^*\Xi ' \otimes F_2^*\Xi$. Then, $h$ is a meromorphic function on $\mathbb{C}$ and consider $h$ as a mapping of $\mathbb{C}$ into $\mathbb{C}P^1$ by
                      $$h(z)=[ F_1^*\Xi  \otimes F_2^*\Xi'(z): F_1^*\Xi ' \otimes F_2^*\Xi(z)]$$ 
for all $z\in S.$ Suppose that $h$ is nonconstant. Let $H_1$ be the hyperplane bundle of $\mathbb{C}P^1$. Then, by Theorem \ref{thm32}, we have 
\begin{align}\label{ine:36}
T(h, H_1)=N(h,[1:0]) & \le N(F_1, \Xi)+ N(F_2, \Xi')\\
\nonumber                                  &= T(F_1, H_m)+ N(F_2,H_m)\\
\nonumber                                  &= T(f_1, L)+ T(f_2, L). 
\end{align}
Applying Theorem B to $f_1$ and $f_2$, we get
\begin{multline}\label{ine:37}
(q-B(d,L))(T(f_1,L)+ T(f_2,L))\le  \\
                     \sum_{j=1}^{q} (N^{[m]}(f_1,R_j)+ N^{[m]}(f_2,R_j))+ 2A(d,L).
\end{multline}
Put $M=\bigcup_{1\le i\le q} f_1^{-1}(D_i)$. Since $f_1=f_2$ on $M,$ it implies that 
\begin{align}\label{ine:38}
\abs{M}\le N(h, [1:1])=T(h,H_1).  
\end{align}                     
Combining (\ref{ine:36}),(\ref{ine:37}) and (\ref{ine:38}), we obtain
\begin{align}\label{ine:39}
(q-B(d,L))\abs{M}\le \sum_{j=1}^{q} (N^{[m]}(f_1,R_j)+ N^{[m]}(f_1,R_j))+ 2 A(d,L).
\end{align}
On the other hand, by the property of $N-$subgeneral position, there is no point $x\in S$ such that $f_1(x)$ belong to $(N+1)$ hypersurfaces $D_j$. Therefore, 
                $$(N+1)\abs{M}\ge \sum_{j=1}^{q} N^{[1]}(f_1,R_j)$$  
Furthermore, we have
                 $$ \sum_{j=1}^{q} N^{[1]}(f_1,R_j)\ge \frac{1}{m}\sum_{j=1}^{q} N^{[m]}(f_1,R_j). $$
Combining the two above inequalities, we get
\begin{align*}
                m(N+1)\abs{M} \ge \sum_{j=1}^{q} N^{[m]}(f_1,R_j).
\end{align*}                 
Since $f_1^{-1}(D_j)=f_2^{-1}(D_j),$  $N^{[1]}(f_1,R_j)=N^{[1]}(f_2,R_j)$. Hence, we have
\begin{align}\label{ine:40}
                2 m(N+1)\abs{M} \ge \sum_{j=1}^{q} (N^{[m]}(f_1,R_j)+N^{[m]}(f_2,R_j)).
\end{align}
By (\ref{ine:39}) and (\ref{ine:40}), we obtain
\begin{align*}
       (q-B(d,L))-2 m(N+1))M\le 2 A(d,L).
\end{align*}
Since $q>  B(d,L)+2 m(N+1)+2A(d,L)$, we get a contradiction. Hence, $h$ is constant. It follows that $F_1 =F_2.$ \\
 
 Put $T=\{x\in X:  \rank_x \Phi <n\}$, $U_i=\{\mathbf{z} \in  \mathbb{C}P^{m}: z_i \not =0\}, i=1,2,\ldots,m+1.$ For each $x \in T,$ take a local coordinate $(x_1,\ldots,x_n)$ of $X$ around $x$  and $U_i$ such that $c_i(x) \not =0.$ Then the Jacobian of $F$ is
$$ \begin{pmatrix}
(\frac{c_1}{c_i})'_{x_1} & (\frac{c_1}{c_i})'_{x_2}& \ldots & (\frac{c_1}{c_i})'_{x_n}\\
\vdots & \ldots & \cdots \\
(\frac{c_{m+1}}{c_i})'_{x_1} & (\frac{c_{m+1}}{c_i})'_{x_2}& \ldots & (\frac{c_{m+1}}{c_i})'_{x_n}
\end{pmatrix} $$
 Since $x\in T$, every minor determinant of  degree $n$ of the above matrix equals  zero at $x$. Hence, $T$ is in the union of zero sets of such minor determinants. We see that there are $\binom{m}{n}$ submatricies of degree $n$ of the above matrix. Let $J_0(x_1,\ldots,x_n)$ be the one of these submatricies. If $(x'_1,\ldots,x'_n)$ is an another coordinate of $X$ around $x$, then we have
\begin{align}\label{ine:46}
\abs{J_0(x_1,\ldots,x_n)}=\abs{J_0(x'_1,\ldots,x'_n)}.\bigg |{\frac{\partial(x'_1,\ldots,x'_n)}{\partial (x_1,\ldots,x_n)}\bigg |}.
 \end{align}
By calculating the determinant of $J_0$, we get 
\begin{align}\label{ine:47}
\abs{J_0}= \frac{\alpha(c_k, (c_l)')}{c_i^{2n}}.
  \end{align}
Since $c_i^{2n} \in H^0(X, L^{2n})$ and by (\ref{ine:46}) and (\ref{ine:47}), we get $\alpha(c_k, (c_l)') \in H^0(X, K_X \otimes L^{2n})$. Denote by $\alpha$ the product of all such quantities. Then, 
 $$\alpha \in H^0(X, K^{\binom{m}{n}}_X \otimes L^{2n \binom{m}{n}}). $$
Since $f_1$ and $f_2$ are  analytically non-degenerate holomorphic mappings with respect to $H^0(X,K^{\binom{m}{n}}_X \otimes L^{2n \binom{m}{n}}),$ it implies that  $f_1(X) \not \subset T$ and $f_2(X) \not \subset T.$ On the other hand,  since $\Phi$ has maximal rank in $X-T$,   $\Phi$ is a covering of $X-T$ onto $\Phi(X)- \Phi(T)$. By the lifting theorem, we get $f_1=f_2$ on $\mathbb{C}- (f_1^{-1}(T)\cup f_2^{-1}(T))$ and hence, $f_1=f_2$ on $\mathbb{C}$. The proof is finished.  
\end{proof}
Finally, we construct an example to show that  the condition $\rank E=n$ cannot omit in Proposition \ref{apro73}.
\begin{example} \label{aex1}
For each $l\in \mathbb{N},$ denote by $H_l$  the hyperplane bundle of $\mathbb{C}P^l$. Let $m,k$ be the fixed integers. Put $X= \mathbb{C}P^m \times \mathbb{C}P^k$. Let $\{U\}$ be an open  cover of $\mathbb{C}P^m$ and $\{\lambda_{UV}\}$ be the transition function system of $H_m$ corresponding to the cover $\{U\}$. Consider the family $\{U^{*}= U\times \mathbb{C}P^k\}$. Put $\lambda_{U^* V^*}(x,y)=\lambda_{UV}(x)$ for each $x\in U\cap V, y \in \mathbb{C}P^k.$
Hence, there exists a line bundle $L^*$ over $X$ such that $\{\lambda_{U^* V^*}\}$ is  its transition function system. Take a section $\sigma^*$ of $L^*$. By the compactness of $\mathbb{C}P^k$, it implies that there is a section $\sigma$ of $L$ such that $\sigma^*(x,y) =\sigma(x).$ Hence, each divisor of  a section of $L^*$ is the Cartesian product of a divisor of $L$ and $\mathbb{C}P^k$.  It is easy to check that $\rank H^0 (X,L) \le m < m+k=$ dimension of $X$.  We can choose a Riemann surface $S$ and divisors $D_1,\ldots,D_q$ of $L$ such that there exist  holomorphic mapping $f$ of  $S$ to $\mathbb{C}P^m$ and two holomorphic mapping $g_1,g_2$ of $S$ into $\mathbb{C}P^k$ which satisfy $g_1=g_2$ on $\cup_{j=1}^{q} f^{-1}(D_j)$ and $g_1 \not \equiv g_2$. 
By a direct computation, we see that  $H^0(X, K_{X}^l \otimes L^t)=0$ for all $l,t>0$. 
Therefore, $g_1,g_2$ is  non-degenerate with respect to $H^0(X, K_{X}^l \otimes L^t)$ for all $l,t>0$.  Define mappings $f_i (i=1,2)$ of $S$ into $X$ by $f_i=(f,g_i)$. Obviously, $f_1,f_2$  satisfy the  conditions $(i), (ii), (iii)$ in Proposition \ref{apro73}, but they are distinct.     
\end{example}

\subsection{Five-Point Theorem of Lappan in high dimension}\

We now recall the following Five-Point Theorem of Lappan \cite{Lap}.

{\bf Theorem of Lappan}\ (see \cite{Lap}).\ {\it Let $A$ be a subset of $\mathbb{C}P^{1}$ with at least $5$ elements.
Then $f \in Hol(\Delta,\mathbb{C}P^{1})$ is normal iff
$$ \sup \big\{ |f'(z)| (1-|z|^2)/(1+|f(z)|^2) :
z \in f^{-1} (A) \big\} < \infty ,$$
where $\Delta$ is the open unit disc in $\mathbb{C}$.}

We now extend this theorem of Lappan 
to a normal family from an arbitrary hyperbolic complex manifold to a compact
complex manifold. First of all, we recall some notions.

Let $Hol(X,Y)$ $(\mathcal{C}(X,Y))$ represent the family of
holomorphic (continuous) maps from a complex (topological) space
$X$ to a complex (topological) space $Y$, and let $Y^+ = Y \cup \{\infty\}$
be the Alexandroff one-point compactification of $Y$ if $Y$ is not
compact, $Y^+ = Y$ if $Y$ is compact. The topology used on all function 
spaces is the compact-open topology.

\begin{definition}\label{def7.1} (see \cite[p.348]{JK})

Let $X$, $Y$  be complex spaces and let $\mathcal{F} \subset Hol(X,Y)$

i) A Brody sequence for $\mathcal{F}$ is a sequence $\{f_n \circ g_n\}$, 
where $f_n \in \mathcal{F}$ and $g_n \in Hol(\Delta_n,X)$, where 
$\Delta_n = \{ z \in \mathbb{C} : |z| < n\}$.

ii) A map $h \in \mathcal{C}(\mathbb{C}, Y^+)$ is a Brody limit for $\mathcal{F}$
if there is a 
Brody sequence $\{h_n\}$ for $\mathcal{F}$ such that $h_n \to h$ on the
compact subsets of $\mathbb{C}$.
\end{definition}

\begin{definition}\label{def7.2} (see \cite[p.348]{JK})

We say that a family $\mathcal{F}$ of holomorphic mappings from a complex
space $X$ to a complex space $Y$ is uniformly normal if 
$$\mathcal{F} \circ Hol(M,X) = \big\{ f \circ g : f \in \mathcal{F},\
g \in Hol(M,X)\big\}$$
is relatively compact in $\mathcal{C}(M,Y^+)$ for each complex space $M$,
and that $f \in Hol (X,Y)$ is a normal mapping if $\{f\}$ is 
uniformly normal.
\end{definition}

As in \cite{JK}, we have the following assertion.

If $X$, $Y$ are complex spaces, then $\mathcal{F} \subset Hol(X,Y)$ is
uniformly normal iff $\mathcal{F} \circ Hol(\Delta,X)$ is relatively
compact in $\mathcal{C}(\Delta,Y^+)$.  

   Let $X$ be a complex manifold and $J_k(X)$ be the $k$-jet bundle over $X.$ Given a holomorphic mapping $f: \Delta_r \rightarrow X$ with $f(0)=x$, we denote by $j_k(f)$ the element of $J_k(X)_x$  defined by the germ of $f$ at $0.$  

    Let $U$ be an open subset of $\mathbb{C}$ and $f: U \rightarrow X$ be a holomorphic curve. We now define a holomorphic mapping
          $J_k(f):U\rightarrow J_k(X).$
Indeed, for each $z\in U$ with  $w\in U-z=U_z$, we put $f_z(w) :=f(z+w)$. Then $f_z$ is a holomorphic mapping of  a neighborhood $U_z$ of $0$ into $X.$ Set 
                       $$J_k(f)(z)=j_k(f_z).$$

The mapping $J_k(f)$ is said to be  a \emph{$k$-jet lift} of $f.$ 

\begin{definition}\label{def7.3}
Let $X$ be   a complex manifold.
A \emph{$k$-jet pseudo-metric} on $J_k(X)$ 
is a real-valued nonnegative continuous function $F$ defined
on $J_k(X)$ satisfying 
                   $$F(c \xi)= \abs{c} F(\xi)\quad \xi \in J_k(X), c\in \mathbb{C}.$$              
Additionally, if  $F(\xi)=0$ iff $\xi=0,$ then $F$ is called a \emph{$k$-jet metric} on $J_k(X).$ 
\end{definition}
																																																						
\begin{proposition}\label{pro74}
Let $X$ be   a complex manifold of dimension $m$. Then, there exists a $k$-jet metric $F$ on $J_k(X).$
\end{proposition}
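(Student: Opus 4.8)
The plan is to build $F$ by the standard two-step recipe: first write down an explicit $k$-jet metric on each coordinate chart, and then glue these local pieces with a partition of unity on the base, a process that preserves both the homogeneity and the positivity required by Definition \ref{def7.3}. To set this up I would fix the local picture of $J_k(X)$. Over a holomorphic chart $(U_\alpha, z=(z_1,\dots,z_m))$ the bundle $J_k(X)$ is trivial, and a $k$-jet $\xi$ lying over a point of $U_\alpha$ is described by its base point together with Taylor data $a(\xi)=(a^{(1)},\dots,a^{(k)})$, where $a^{(j)}\in\mathbb{C}^m$ is the $j$-th order coefficient of a representing germ. The scaling action is intrinsic (reparametrization $w\mapsto cw$ of the source), and in every chart it takes the weighted form $a^{(j)}\mapsto c^{j}a^{(j)}$ independently of the target coordinates; moreover the zero jet $a(\xi)=0$ (the jet of a constant germ) is intrinsic, so the condition $\xi=0$ in a fibre is globally meaningful.

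Next I would define, on $J_k(U_\alpha)$,
\[
F_\alpha(\xi)=\Big(\sum_{j=1}^{k}\sum_{i=1}^{m}|a^{(j)}_i|^{2/j}\Big)^{1/2}.
\]
Since each $t\mapsto |t|^{2/j}$ is continuous and nonnegative, $F_\alpha$ is continuous and nonnegative, and it vanishes exactly when all $a^{(j)}=0$, i.e.\ iff $\xi=0$. Under $a^{(j)}\mapsto c^{j}a^{(j)}$ the summand $|a^{(j)}_i|^{2/j}$ is multiplied by $|c|^{2}$, so $F_\alpha(c\xi)=|c|\,F_\alpha(\xi)$. Hence each $F_\alpha$ is already a $k$-jet metric on $J_k(U_\alpha)$.

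To patch, I would choose a locally finite atlas $\{U_\alpha\}$ of such charts and a subordinate partition of unity $\{\rho_\alpha\}$ on $X$, then pull each $\rho_\alpha$ back by $\pi\colon J_k(X)\to X$; because the scaling action fixes base points, $\rho_\alpha\circ\pi$ is invariant under it. Setting
\[
F=\sum_\alpha (\rho_\alpha\circ\pi)\,F_\alpha,
\]
with each term extended by zero off $J_k(U_\alpha)$, local finiteness gives continuity. Homogeneity follows since each $F_\alpha$ is degree-one homogeneous while the weights are scaling-invariant, whence $F(c\xi)=|c|\,F(\xi)$. For positivity, if $F(\xi)=0$ then every nonnegative term vanishes; choosing $\alpha$ with $\rho_\alpha(\pi(\xi))>0$ forces $F_\alpha(\xi)=0$, hence $\xi=0$. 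Thus $F$ is a $k$-jet metric.

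The step that requires the most care is the local construction rather than the gluing: one must match the weighted action $a^{(j)}\mapsto c^{j}a^{(j)}$ by using the fractional exponents $2/j$, and one should observe that the resulting $F_\alpha$ is only continuous, not smooth, at the zero jet precisely because of these exponents—harmless here, since Definition \ref{def7.3} demands only continuity. Once the weighted local formula is in place, the partition-of-unity step is routine, the convex combination automatically retaining degree-one homogeneity (the weights being scaling-invariant) and positive definiteness (at each point some weight is strictly positive).
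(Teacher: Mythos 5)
Your proposal is correct and follows essentially the same route as the paper: an explicit weighted-homogeneous expression in the local Taylor coefficients on each chart (your exponents $2/j$ with an outer square root play the same role as the paper's $s_t=k!/t$ with an outer $1/k!$ power), glued by a partition of unity pulled back from the base. The only difference is cosmetic, and your explicit remarks that the scaling action is the source reparametrization $a^{(j)}\mapsto c^{j}a^{(j)}$ and that the zero jet is intrinsic make the homogeneity and positivity checks cleaner than in the paper's own write-up.
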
 
\begin{proof}
Let $\{(U_i, \pi_i)\}$ be a trivialization system of $J_k(X)$ over $X$ such that $\pi: U_i \rightarrow V_i \times \mathbb{C}^{km}$, where  $V_i$ is an open polydisc in $\mathbb{C}^m.$  Take a partition of unity $\{c_i\}$ subordinated to the open covering $\{U_i\}.$ Assume that $(x_1^i,\ldots,x_m^i)$ is a local coordinate system of $X$ on $V_i.$ Define the mapping 
     $ F_i: U_i \rightarrow \mathbb{R}^{+}$ by 
                            $$j_k(f) \longmapsto \sum_{t=1}^k (\sum_{l=1}^m \abs{d^t x^i_l(f)(0)}^{s_t})^{\frac{1}{k!}},$$
where $s_t=\dfrac{k!}{t}\, (1\le t\le k).$ 
Obviously, $F_i$ is a $k$-jet metric on $J_k(V_i).$ Put $F=\sum_i c_i F_i.$ Then $F$ is a $k$-jet metric on $J_k(X).$
\end{proof}
  
  Given a point $x\in X, \xi \in J_k(X)_x$, the Kobayashi $k$-pseudo-metric $K^k_X(x,\xi)$ is determined
  by
$$K^k_X(x,\xi) = \inf \big\{\frac{1}{r} : \varphi(0) = x,
J_k(\varphi)(0) = \xi \\ \text{ for some } \varphi \in Hol(\Delta_r,X)\big\}.$$
 
  For a holomorphic mapping $g$ of $Y$ into $X$, the pull-back  $g^*K^k_X$ of  $K^k_X$ is a pseudo-metric on $Y$ given by 
      $$g^*K^k_X(y, j_k(f))= K^k_X(g(y), j_k(g\circ f)).$$ 

By the above definitions, it is easy to get the following.  
\begin{lemma}\label{le75}
Let the notations be as above. Then,

{\rm (i)} $g^*K^k_X(y, \xi_y ) \le K^k_Y(g(y), g_{*}\xi_y)$ for all $y\in Y, \xi_y\in J_k(Y)_y$ .

{\rm (ii)} $K^k_{\Delta_r}(z,j_k(id))\le \frac{1}{r},$ for all $z\in \Delta_r$ ($id$ is the identity mapping). 
\end{lemma}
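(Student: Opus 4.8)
The plan is to derive both inequalities directly from the definition of $K^k_X$ as an infimum over admissible disc maps, using only the functoriality of the $k$-jet lift under composition. Throughout, call a map $\varphi\in Hol(\Delta_r,X)$ \emph{admissible at} $(x,\xi)$ \emph{with radius} $r$ if $\varphi(0)=x$ and $J_k(\varphi)(0)=\xi$; by definition $K^k_X(x,\xi)$ is the infimum of $1/r$ over all such $\varphi$.

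For (i), I would first record the chain rule for the jet lift: if $\varphi\in Hol(\Delta_r,Y)$ and $g\in Hol(Y,X)$, then $J_k(g\circ\varphi)(0)=g_*\,J_k(\varphi)(0)$. This is immediate from the definition $f_z(w)=f(z+w)$, since $(g\circ\varphi)_0=g\circ\varphi_0$ and $j_k(g\circ\varphi_0)=g_*\,j_k(\varphi_0)$ by the very definition of the push-forward $g_*$ on jets. Granting this, take any $\varphi$ admissible at $(y,\xi_y)$ with radius $r$; then $g\circ\varphi\in Hol(\Delta_r,X)$ satisfies $(g\circ\varphi)(0)=g(y)$ and $J_k(g\circ\varphi)(0)=g_*\xi_y$, so it is admissible at $(g(y),g_*\xi_y)$ with the same radius $r$. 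Hence every value $1/r$ available on the $Y$-side is also available on the $X$-side, and passing to the infimum gives $K^k_X(g(y),g_*\xi_y)\le K^k_Y(y,\xi_y)$; by the definition of the pull-back, the left-hand side is exactly $g^*K^k_X(y,\xi_y)$, which is the asserted distance-decreasing inequality.

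For (ii), the key observation is that the identity (inclusion) map $id\colon\Delta_r\to\Delta_r$ is itself an admissible competitor: $id(0)=0$ and, by the definition of the jet lift, $J_k(id)(0)=j_k(id_0)=j_k(id)$. Thus $id$ realizes the jet $j_k(id)$ at base point $0$ with radius exactly $r$, and feeding it into the infimum yields $K^k_{\Delta_r}(0,j_k(id))\le 1/r$ at once. I would then attempt to reduce a general base point $z$ to this case using the transitive action of $\mathrm{Aut}(\Delta_r)$ together with the Schwarz--Pick lemma to control the realized $1$-jet of the competing map.

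The main obstacle. Part (i) is essentially formal once the jet chain rule is in hand, so the one point deserving careful checking is that functoriality, $J_k(g\circ\varphi)(0)=g_*J_k(\varphi)(0)$; the entire distance-decreasing property rests on it, and it is where I would be most careful. The genuinely delicate point is the base-point dependence in (ii): for $z\neq 0$ no radius-$r$ disc map can realize $j_k(id)$ at $z$ with derivative $1$, since Schwarz--Pick forces $\abs{\varphi'(0)}\le 1-\abs{z}^2/r^2$, so the clean bound $1/r$ is sharp precisely at $z=0$ (the exact value being the Poincar\'e factor $r/(r^2-\abs{z}^2)$). Any treatment claiming the estimate for all $z$ must therefore either restrict to the base point that is actually used or carry along this factor, and this is the step I expect to require the most attention.
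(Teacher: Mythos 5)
The paper offers no proof of this lemma at all --- it is introduced with ``By the above definitions, it is easy to get the following'' --- so there is nothing to compare your argument against except the definitions themselves. Your proof of (i) is the intended one and is correct: composition with $g$ turns every admissible competitor for $K^k_Y(y,\xi_y)$ into one for $K^k_X(g(y),g_*\xi_y)$ of the same radius, and the jet chain rule $J_k(g\circ\varphi)(0)=g_*J_k(\varphi)(0)$ that you single out for checking does follow directly from the paper's definition $f_z(w)=f(z+w)$ of the jet lift. Note that you have also silently repaired a typo in the statement: as printed, the right-hand side of (i) is $K^k_Y(g(y),g_*\xi_y)$, which does not typecheck since $g(y)\in X$; the inequality you prove, $K^k_X(g(y),g_*\xi_y)\le K^k_Y(y,\xi_y)$, is what the definition of the pull-back requires and what the rest of Section 7 uses.

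Your suspicion about (ii) is not merely a point requiring care --- the statement is false for $z\ne 0$ exactly as you suspect. Any $\varphi\in Hol(\Delta_\rho,\Delta_r)$ with $\varphi(0)=z$ and $J_k(\varphi)(0)=j_k(id_z)$ must have $\varphi'(0)=1$, and Schwarz--Pick forces $\rho\le (r^2-\abs{z}^2)/r$, whence $K^k_{\Delta_r}(z,j_k(id))\ge r/(r^2-\abs{z}^2)>1/r$ whenever $z\ne 0$; the clean bound $1/r$ holds only at the origin, where your identity-map competitor realizes it. So rather than trying to ``reduce to the case $z=0$'' via automorphisms (which cannot work, since the automorphism distorts the realized $1$-jet by precisely the Poincar\'e factor), the honest fix is to state (ii) as $K^k_{\Delta_r}(z,j_k(id))\le r/(r^2-\abs{z}^2)$. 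This weaker form still suffices for the only place the lemma is used: in the proof of the normality theorem one has $z_k\to z_0$ bounded and $r=k\to\infty$, so $K^m_{\Delta_k}(z_k,j_m(id))\le k/(k^2-\abs{z_k}^2)\sim 1/k$ and the conclusion $\abs{df_k(p_k)}\to\infty$ survives with the constant $k$ replaced by $(k^2-\abs{z_k}^2)/k$. In short, your proposal is correct where the lemma is correct, and your ``main obstacle'' is a genuine error in the paper's statement, not a gap in your argument.
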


\begin{proposition}\label{pro76} (see \cite[Theorem A]{TD}) \ Let $X$ be a complex manifold. Then $X$ is hyperbolic iff for each $x\in X$ and for each open neighborhood $U$ of $x,$ there exist an open neighborhood $V$ of $x$ in $U$ and a positive constant $C$ such that 
                                  $$K^k_U(y,\xi_y) \le C. K^k_X(y,\xi_y),$$
for all $k\ge 1$, for all $y\in V$ and $\xi_y \in J_k(X)_y.$ 
\end{proposition}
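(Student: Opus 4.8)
The plan is to establish both implications of the equivalence, noting first that one half of the comparison is automatic: since the inclusion $\iota\colon U\hookrightarrow X$ is holomorphic, Lemma \ref{le75}(i) applied to $\iota$ gives $K^k_X(y,\xi_y)\le K^k_U(y,\xi_y)$ for every $y\in U$ and $\xi_y\in J_k(X)_y$. Thus the substantive content of the stated condition is the reverse bound $K^k_U\le C\,K^k_X$ on a small set $V$, i.e.\ that the two Kobayashi $k$-pseudo-metrics are \emph{comparable} near each point, uniformly in $k$.

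For the implication ``hyperbolic $\Rightarrow$ comparison'', I would use that for hyperbolic $X$ the Kobayashi distance $d_X$ induces the manifold topology (Barth's theorem), so given $x$ and $U$ there is $\epsilon>0$ with the Kobayashi ball $B_{d_X}(x,2\epsilon)\subset U$; set $V=B_{d_X}(x,\epsilon)$. Now take any competing disc $\varphi\in Hol(\Delta_r,X)$ with $\varphi(0)=y\in V$ and $J_k(\varphi)(0)=\xi_y$. By the distance-decreasing property $d_X(\varphi(0),\varphi(z))\le d_{\Delta_r}(0,z)$ together with the scale invariance of the Poincar\'e metric, there is a constant $\delta\in(0,1)$ depending only on $\epsilon$ (not on $r$, $y$, $k$, or $\varphi$) such that $\varphi(\Delta_{\delta r})\subset B_{d_X}(y,\epsilon)\subset B_{d_X}(x,2\epsilon)\subset U$. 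The restriction $\varphi|_{\Delta_{\delta r}}$ lies in $Hol(\Delta_{\delta r},U)$ and has the \emph{same} $k$-jet $\xi_y$ at $0$ for every $k$, so $K^k_U(y,\xi_y)\le 1/(\delta r)$. Taking the infimum over all such $\varphi$ yields $K^k_U(y,\xi_y)\le \delta^{-1}K^k_X(y,\xi_y)$, i.e.\ the desired inequality with $C=\delta^{-1}$, simultaneously for all $k$.

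For the converse ``comparison $\Rightarrow$ hyperbolic'', it suffices to use the hypothesis only for $k=1$, where $K^1_X$ coincides with the infinitesimal Kobayashi--Royden metric $\kappa_X$. Fix $x\in X$ and choose $U$ to be a coordinate chart biholomorphic to a bounded domain, hence hyperbolic. The hypothesis furnishes $V\ni x$ and $C>0$ with $\kappa_U(y,\xi)\le C\,\kappa_X(y,\xi)$ for $y\in V$; shrinking $V$ I may assume $V\Subset U$. Since $U$ is hyperbolic and $V$ is relatively compact in $U$, the Kobayashi--Royden metric of $U$ is bounded below by a positive multiple of the Euclidean metric on $V$, say $\kappa_U(y,\xi)\ge c\,|\xi|$. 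Combining, $\kappa_X(y,\xi)\ge (c/C)\,|\xi|$ on $V$. As $x$ was arbitrary, $\kappa_X$ is locally bounded below by a Hermitian metric everywhere; integrating along paths and invoking Royden's theorem that $d_X$ is the integrated form of $\kappa_X$, I conclude that $d_X$ separates points, so $X$ is hyperbolic.

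The main obstacle is the backward implication: passing from a purely \emph{local} metric comparison to the \emph{global} separation property of hyperbolicity. The decisive steps are the choice of $U$ as a hyperbolic (bounded) neighborhood, which is what converts the comparison into a genuine lower bound for $\kappa_X$, and the appeal to Royden's integration theorem to turn infinitesimal positivity into nondegeneracy of $d_X$. In the forward direction the delicate point is uniformity: one must check that the shrinking factor $\delta$, and hence $C=\delta^{-1}$, can be chosen independently of the point $y\in V$, of the radius $r$, and of the jet order $k$ --- which is exactly why the scale invariance of the Poincar\'e metric and the jet-preserving nature of disc restriction are invoked.
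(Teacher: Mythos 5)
The paper offers no proof of Proposition \ref{pro76} --- it is quoted from \cite{TD} --- so there is nothing internal to compare against; your argument must be judged on its own terms, and it is correct and essentially the standard one. The forward direction is Royden's localization argument upgraded to $k$-jets: Barth's theorem supplies a Kobayashi ball $B_{d_X}(x,2\epsilon)\subset U$, the distance-decreasing property forces every competing disc $\varphi\in Hol(\Delta_r,X)$ centered at $y\in V=B_{d_X}(x,\epsilon)$ to map $\Delta_{\delta r}$ into $U$ with $\delta=\tanh\epsilon$ independent of $r$, $y$, $k$ and $\varphi$, and restricting a disc does not change its jet at the origin, whence $K^k_U\le\delta^{-1}K^k_X$ on $V$ uniformly in $k$; you correctly isolate the uniformity of $\delta$ as the only delicate point. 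The converse correctly observes that only $k=1$ is needed, where the hypothesis applied to a bounded coordinate chart $U$ yields a local positive Hermitian lower bound for $\kappa_X=K^1_X$, after which Royden's theorem that $d_X$ is the integrated form of $\kappa_X$ gives hyperbolicity. The one step you leave implicit is the passage from ``$\kappa_X\ge c\,|\cdot|$ on a neighborhood $V$ of $p$'' to ``$d_X(p,q)>0$ for every $q\ne p$'': one must note that a path from $p$ to an arbitrary $q$ either stays in $V$ or must traverse the shell between a smaller ball about $p$ and $\partial V$, so its $\kappa_X$-length is bounded below by a positive constant; this is routine and does not affect the validity of the proof.
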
 
For more fundamental properties of $K^k_X$, see \cite{TD}.       
\begin{proposition}\label{pro77}
  $K^k_{\Delta}(y,\xi_y)>0$ for each $y\in \Delta$ and  $\xi_y\in J_k(\Delta)- \{0_y\}.$ 
\end{proposition}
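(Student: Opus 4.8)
The plan is to unwind the definition of $K^k_\Delta$ and extract a positive lower bound directly from the boundedness of holomorphic maps into $\Delta$. By definition, $K^k_\Delta(y,\xi_y)$ is the infimum of $1/r$ taken over all $\varphi\in Hol(\Delta_r,\Delta)$ with $\varphi(0)=y$ and $J_k(\varphi)(0)=\xi_y$. Thus it suffices to show that $r$ is bounded above uniformly over all such $\varphi$ and all admissible $r$; equivalently, that $1/r$ is bounded below by a strictly positive constant depending only on $\xi_y$.

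Since $\Delta$ is one-dimensional, a $k$-jet at $y$ is completely described in the standard coordinate by the tuple of derivatives $(\varphi'(0),\ldots,\varphi^{(k)}(0))$, and this data is the same for every representative $\varphi$ of the fixed jet $\xi_y$. Recall moreover that the scaling action $c\cdot\xi_y$ corresponds to the reparametrization $w\mapsto\varphi(cw)$, which multiplies the $t$-th derivative by $c^t$; in particular the zero element $0_y$ is exactly the jet all of whose derivative components of orders $1,\ldots,k$ vanish. Hence the hypothesis $\xi_y\neq 0_y$ guarantees that there is a smallest index $t_0\in\{1,\ldots,k\}$ for which $a_{t_0}:=\varphi^{(t_0)}(0)\neq 0$, and $a_{t_0}$ is independent of the chosen representative.

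Next I would apply the Cauchy estimates. For any admissible $\varphi\in Hol(\Delta_r,\Delta)$ we have $\abs{\varphi}\le 1$ on $\Delta_r$, so for every $\rho<r$ the Cauchy integral formula gives $\abs{a_{t_0}}=\abs{\varphi^{(t_0)}(0)}\le t_0!/\rho^{t_0}$; letting $\rho\to r$ yields $\abs{a_{t_0}}\le t_0!/r^{t_0}$. Rearranging, $1/r\ge (\abs{a_{t_0}}/t_0!)^{1/t_0}$. Since the right-hand side is a fixed positive number independent of $\varphi$ and of $r$, taking the infimum over all admissible data gives $K^k_\Delta(y,\xi_y)\ge (\abs{a_{t_0}}/t_0!)^{1/t_0}>0$, which is the desired conclusion.

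There is essentially no analytic obstacle here: the estimate is an immediate consequence of the uniform bound $\abs{\varphi}\le 1$ enjoyed by maps into the disc. The only point that needs care is the bookkeeping at the level of jets, namely checking that the derivative components of $\xi_y$ are representative-independent and correctly identifying the order $t_0$ that witnesses $\xi_y\neq 0_y$, so that the exponent in the final bound is the right one. Once this is in place the positivity is forced by the Cauchy estimate.
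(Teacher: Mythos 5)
Your proof is correct, and it rests on the same engine as the paper's own argument --- Cauchy estimates for holomorphic maps into the bounded target $\Delta$ --- but the execution is genuinely different and cleaner. The paper iterates the first-order Cauchy estimate $k-1$ times on successively smaller discs to conclude that $\varphi^{(k-1)}$ maps $\Delta_{r/2^{k-1}}$ into a disc whose radius depends on $r$, and then appeals to positivity of the one-jet metric of that auxiliary disc; this last step is awkward because the auxiliary radius depends on the very $r$ over which the infimum in the definition of $K^k_\Delta$ is taken, so the resulting lower bound is not manifestly uniform and one must argue further that it does not degenerate as $r\to\infty$. Your single application of the $t_0$-th order Cauchy estimate, $\abs{\varphi^{(t_0)}(0)}\le t_0!/r^{t_0}$, produces the explicit uniform bound $K^k_\Delta(y,\xi_y)\ge(\abs{a_{t_0}}/t_0!)^{1/t_0}>0$ in one stroke and sidesteps that issue entirely. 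What your route buys is an explicit, representative-independent constant and no reduction to the $k=1$ case; what the paper's route buys is nothing extra here, and your attention to the bookkeeping (that the derivative components of the jet are the same for every representative, and that $\xi_y\neq 0_y$ forces some $\varphi^{(t_0)}(0)\neq 0$) is exactly the point that the paper leaves implicit.
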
     
\begin{proof}
Assume that  $\varphi: \Delta_r \to \Delta$ is a holomorphic mapping such that $\varphi(0)=y$ and $J_k(\varphi)(0)=\xi_y.$ Since $\xi_y:=(\xi^1_y,\cdots,\xi^k_y)\not = 0_y$, there is $1\le i\le k$ such that $\xi^i_y\not =0$. Since
                          $$\varphi'(z)=\frac{1}{2\pi i}\int_{\partial \Delta_r} \frac{f(a)}{(a-z)^2}da$$
for all $z\in \Delta_r,$ it implies that 
                       $\abs{\varphi'(z)}\le \frac{4}{r^2}$ for each $z\in \Delta_{\frac{r}{2}}.$

Repeating the above argument  for $\varphi', \varphi^{(2)},\ldots,$ we get 
                        $$\abs{\varphi^{(k)}(z)}\le \frac{2^{n^2}}{r^{2k}}$$
for all $z\in \Delta_{\frac{r}{2^k}}.$ Therefore, we have
 $$g=\varphi^{(k-1)}: \Delta_{\frac{r}{2^{k-1}}}\rightarrow \Delta_{\frac{2^{n^2}}{r^{2k}}}$$ 
and  $g'(0)=\xi^i_y.$ This implies that $K^k_{\Delta}(y,\xi_y)\ge K^1_{\Delta_{\frac{2^{n^2}}{r^{2k}}}}(y,\xi^i_y)>0.$ 
We get the desired conclusion.                         
\end{proof}
Combining Proposition \ref{pro76} and Proposition \ref{pro77}, we get              
\begin{corollary}\label{co78}
Let $X$ be a hyperbolic complex manifold. Then, we have $K^k_X(x,\xi_x)>0$ for all $x\in X, \xi_x\in J_k(X)_x-\{0_x\}.$
\end{corollary}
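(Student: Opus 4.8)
The plan is to deduce the positivity of $K^k_X$ on all of $X$ from the one–dimensional positivity statement of Proposition \ref{pro77}, by first localising the problem via hyperbolicity and then reducing to a polydisc. Fix $x\in X$ and a nonzero jet $\xi_x\in J_k(X)_x-\{0_x\}$, and set $m=\dim X$. The key point is that Proposition \ref{pro76} lets me compare $K^k_X$ near $x$ with $K^k_U$ for a conveniently chosen neighbourhood $U$, at the cost of a constant: hyperbolicity produces an open set $V\ni x$ contained in $U$ and a constant $C>0$ with $K^k_U(y,\xi_y)\le C\,K^k_X(y,\xi_y)$ for all $y\in V$. Taking $y=x$, it therefore suffices to prove $K^k_U(x,\xi_x)>0$ for a suitable $U$.

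First I would take $U$ to be a holomorphic coordinate chart around $x$; after composing with the coordinate map and rescaling (shrinking $U$ if necessary), I may identify $U$ biholomorphically with an open subset of the unit polydisc $\Delta^m\subset\mathbb{C}^m$, carrying $x$ and $\xi_x$ into $\Delta^m$. Since $K^k_U$ is a biholomorphic invariant and the inclusion $U\hookrightarrow\Delta^m$ is holomorphic, the distance–decreasing property of $K^k$ (Lemma \ref{le75}(i)) gives the monotonicity $K^k_{\Delta^m}(x,\xi_x)\le K^k_U(x,\xi_x)$. Hence it is enough to show $K^k_{\Delta^m}(x,\xi_x)>0$.

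For the polydisc I would use the coordinate projections $p_i:\Delta^m\to\Delta$. A $k$–jet at $x$ is represented by a germ $f=(f_1,\dots,f_m)$, and $(p_i)_*\xi_x$ is the $k$–jet of $f_i$; since $\xi_x\neq 0_x$, at least one derivative $f_i^{(t)}(0)$ with $1\le t\le k$ is nonzero, so $(p_i)_*\xi_x\neq 0$ for some $i$. Applying the distance–decreasing property to $p_i$ gives $K^k_{\Delta}(p_i(x),(p_i)_*\xi_x)\le K^k_{\Delta^m}(x,\xi_x)$, and Proposition \ref{pro77} makes the left–hand side strictly positive. This yields $K^k_{\Delta^m}(x,\xi_x)>0$, hence $K^k_U(x,\xi_x)>0$, and finally $K^k_X(x,\xi_x)\ge C^{-1}K^k_U(x,\xi_x)>0$ by the first paragraph.

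The computations here are harmless, and the only delicate bookkeeping is keeping the chain of inequalities pointing consistently in the right direction. The genuinely nontrivial input is the comparison $K^k_U\le C\,K^k_X$ supplied by Proposition \ref{pro76}: the trivial inclusion only gives $K^k_U\ge K^k_X$, so it is precisely hyperbolicity that furnishes the reverse estimate needed to transport positivity from the local model $\Delta^m$ back to $X$. I would also take care to verify the elementary claim that a nonzero $k$–jet admits a nonzero coordinate projection, since that is exactly what allows Proposition \ref{pro77} to be invoked.
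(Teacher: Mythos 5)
Your argument is correct and is exactly the route the paper intends: its ``proof'' consists of the single sentence ``Combining Proposition \ref{pro76} and Proposition \ref{pro77}, we get'' the corollary, and your write-up supplies precisely the missing glue (choosing a polydisc chart, the monotonicity $K^k_{\Delta^m}\le K^k_U$, and projecting a nonzero jet to a factor disc so that Proposition \ref{pro77} applies). No changes needed.
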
  
\begin{proposition}\label{pro79}
Let $f: X\rightarrow Y$ be a holomorphic mapping between complex manifolds such that $f$ is normal. Assume that $F$ is  a $k$-jet metric  on $J_k(Y)$ and  $r>0.$ Then, there exists a constant $c>0$ such that 
                  $$F(J_k(f\circ \phi)) \le c\, \text{ for each }\phi \in Hol(\Delta_r,M).$$
\end{proposition}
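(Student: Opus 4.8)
The plan is to argue by contradiction and extract the bound directly from the definition of normality, using that a relatively compact family of holomorphic discs has convergent $k$-jets at the centre. Throughout I read $F(J_k(f\circ\phi))$ as the value of $F$ on the $k$-jet $J_k(f\circ\phi)(0)\in J_k(Y)$ at the origin, which is the only interpretation for which a uniform bound can hold (near $\partial\Delta_r$ the jet lift of a fixed disc already blows up). First I would apply Definition \ref{def7.2} with the choice $M=\Delta_r$: since $f$ is normal, $\{f\}$ is uniformly normal, so the family
$$\{f\}\circ Hol(\Delta_r,X)=\{f\circ\phi:\phi\in Hol(\Delta_r,X)\}$$
is relatively compact in $\mathcal{C}(\Delta_r,Y^+)$. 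Assuming no constant $c$ works, I obtain maps $\phi_n\in Hol(\Delta_r,X)$ with $F(J_k(f\circ\phi_n)(0))\to\infty$; writing $g_n=f\circ\phi_n$, relative compactness lets me pass to a subsequence with $g_n\to h$ in $\mathcal{C}(\Delta_r,Y^+)$.

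The heart of the argument is to promote this $C^0$-convergence to convergence of the $k$-jets at $0$. If $h(0)\in Y$, then for $n$ large $g_n$ maps a fixed small disc around $0$ into one coordinate chart about $h(0)$, and there $g_n\to h$ uniformly as holomorphic maps into $\mathbb{C}^{\dim Y}$; by the Cauchy integral formula (exactly as used in the proof of Proposition \ref{pro77}) all derivatives up to order $k$ converge, i.e.\ $J_k(g_n)(0)\to J_k(h)(0)$ in $J_k(Y)$. Since $F$ is continuous on $J_k(Y)$, this forces $F(J_k(g_n)(0))\to F(J_k(h)(0))<\infty$, contradicting $F(J_k(g_n)(0))\to\infty$ and yielding the desired $c$.

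The step I expect to be the main obstacle is the escape-to-infinity case $h(0)=\infty$, where the base points $g_n(0)$ leave every compact subset of $Y$ and $F$, being a jet metric on $J_k(Y)$, is no longer controlled by continuity. When $Y$ is compact — the situation relevant to the Lappan-type theorem of this section — one has $Y^+=Y$ and this case simply does not arise, so the argument above is complete. For general $Y$ one instead passes to a holomorphic chart about the point at infinity of $Y^+$: uniform convergence $g_n\to\infty$ near $0$ forces the jets to converge in that chart, and the remaining point is to verify that the resulting bound transfers back to $F$.

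A cleaner alternative, available because the source manifold is hyperbolic in the applications, routes the estimate through the Kobayashi $k$-metric. Here I would first prove that normality of $f$ is equivalent to a comparison $F(f_*\xi)\le c\,K^k_X(x,\xi)$ for all $\xi\in J_k(X)_x$, which is meaningful precisely because $K^k_X>0$ by Corollary \ref{co78}. Then, applying the distance-decreasing property of Lemma \ref{le75} to $\phi:\Delta_r\to X$ gives $K^k_X(\phi(0),\phi_*j_k(\mathrm{id}))\le K^k_{\Delta_r}(0,j_k(\mathrm{id}))\le 1/r$ by Lemma \ref{le75}(ii), whence $F(J_k(f\circ\phi)(0))=F(f_*J_k(\phi)(0))\le c/r$. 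The nontrivial input in this route is again exactly the comparison inequality, whose proof is the same contradiction-plus-normal-families argument, so the essential difficulty is unchanged.
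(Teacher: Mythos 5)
Your argument is correct, and its engine is the same as the paper's: assume the bound fails, use uniform normality with $M=\Delta_r$ to extract a convergent subsequence $f\circ\phi_n\to h$, pass to a coordinate chart around $h(0)$ to upgrade $C^0$-convergence to convergence of all derivatives up to order $k$ via Cauchy estimates, and contradict the continuity of the jet metric $F$. Where you genuinely diverge is in the reading of the statement, and your reading is the better one. The paper takes the bound to hold at every point of the disc: its proof starts from points $z_n\in\Delta$ with $F\circ J_k(f\circ\phi_n)(z_n)\ge n$ and reduces to the origin by precomposing with automorphisms $T_n$ with $T_n(0)=z_n$, asserting that $F\circ J_k(f\circ\phi_n\circ T_n)(0)\ge n$ still holds. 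That assertion is unjustified, since the chain rule rescales the first-order part of the jet by $T_n'(0)$ with $\lvert T_n'(0)\rvert=1-\lvert z_n\rvert^2/r^2\to 0$ as $z_n$ approaches the boundary; indeed the ``all points'' version of the statement is false (take $f$ the inclusion $\Delta\hookrightarrow\mathbb{C}P^1$, which is normal, and $\phi_n$ automorphisms of $\Delta$ with $\phi_n(-a_n)=0$, so that $\lvert\phi_n'(-a_n)\rvert=(1-a_n^2)^{-1}\to\infty$). Your origin-only reading is exactly what is invoked later in the proof of Corollary \ref{co710}, so nothing downstream is lost. You also make explicit the escape case $h(0)=\infty$ in $Y^+$, which the paper silently excludes by declaring the limit to lie in $Hol(\Delta_r,X)$; as you observe, this case is vacuous when $Y$ is compact, which is the only case the paper needs. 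Your alternative route through $K^k_X$ should be discarded as a proof, however: the comparison inequality it relies on is precisely Corollary \ref{co710}, which is itself deduced from this proposition, so that route is circular.
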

\begin{proof}
Suppose the contrast. Then, there exist $\{\phi_n\}\subset Hol(\Delta,M)$ and $\{z_n\}\subset \Delta$ such that $F \circ J_k(f\circ \phi_n) (z_n) \ge n$ for all $n.$ Take  an automorphism $T_n$ of $\Delta$ such that $T_n(0)=z_n.$ Put $\phi'_n= \phi_n \circ T_n.$ Then 
                     $$F \circ J_k(f\circ \phi '_n) (0) \ge n \text{ for all } n. $$   
Since $f$ is normal, there exists $g\in Hol(\Delta_r, X)$ such that $f\circ\phi'_n \rightarrow g.$ Therefore, there exist an open subset $V$ of $\Delta_r$ around $0$ and a local coordinate $U$ of $g(0)$ in $X$ such that $f\circ\phi'_n(z), g(z)\in U$ for all $z\in V.$ This implies that $(f\circ\phi'_n)^{(t)}\rightarrow g^{(t)}$ (the $t$-th derivative) on $V.$ Hence, 
          $$F\circ J_k(g) (0)= \lim_{n\rightarrow \infty} F \circ J_k(f\circ \phi '_n) (0)=\infty.$$
This is a contradiction.                     
\end{proof}
\begin{corollary}\label{co710}
Let $f: X\rightarrow Y$ be a holomorphic mapping between complex manifolds such that $f$ is normal. Assume that $F$ is  a $k$-jet metric  on $J_k(Y)$  and $K^k_X(x, \xi_x)>0\, \text{ for all } x\in X, \xi_x\in J_k(X)-\{0_x\}.$ Then,  there exists a constant $c>0$ such that 
                   $$f^* F(x, \xi_x) \le c\cdot K^k_X(x, \xi_x)\, \text{ for all } x\in X, \xi_x\in J_k(X).$$
\end{corollary}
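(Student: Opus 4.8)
The plan is to combine the scaling behaviour of the $k$-jet metric $F$ under reparametrization of discs with the uniform bound supplied by Proposition \ref{pro79}. Fix $x\in X$ and $\xi_x\in J_k(X)_x$. If $\xi_x=0_x$, a constant disc shows that $f^*F(x,0_x)=F(0)=0$, so I may assume $\xi_x\neq 0_x$. The starting point is the definition of the Kobayashi $k$-pseudo-metric: for every $\epsilon>0$ there are $r>0$ and $\varphi\in Hol(\Delta_r,X)$ with $\varphi(0)=x$, $J_k(\varphi)(0)=\xi_x$ and $\tfrac{1}{r}\le K^k_X(x,\xi_x)+\epsilon$. By the definition of the pullback pseudo-metric one has $f^*F(x,\xi_x)=F(J_k(f\circ\varphi)(0))$, which is the quantity I must estimate.

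The key observation is that reparametrizing a disc by $w\mapsto rw$ realizes exactly the scalar action appearing in Definition \ref{def7.3}. Setting $\psi(w):=\varphi(rw)$ for $w\in\Delta$, the Taylor coefficients of $\psi$ at $0$ are those of $\varphi$ multiplied by $r^{t}$ in weight $t$, which is precisely $J_k(\psi)(0)=r\cdot J_k(\varphi)(0)$ for the $\mathbb{C}$-action on $J_k(X)$ under which $F$ is homogeneous of degree one (this is the weighted action for which the metric built in Proposition \ref{pro74} satisfies $F(c\xi)=\abs{c}F(\xi)$). Since $f\circ\psi=(f\circ\varphi)(r\,\cdot)$, the same identity gives $J_k(f\circ\psi)(0)=r\cdot J_k(f\circ\varphi)(0)$, and homogeneity of $F$ yields $F(J_k(f\circ\psi)(0))=r\,F(J_k(f\circ\varphi)(0))=r\,f^*F(x,\xi_x)$.

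I would then apply Proposition \ref{pro79} with the unit disc $\Delta$: since $\psi\in Hol(\Delta,X)$ and $f$ is normal, there is a constant $c>0$, depending only on $f$ and $F$ and not on $\psi$, $x$ or $\xi_x$, with $F(J_k(f\circ\psi)(0))\le c$. Combining this with the displayed identity gives $r\,f^*F(x,\xi_x)\le c$, that is $f^*F(x,\xi_x)\le c/r\le c\,(K^k_X(x,\xi_x)+\epsilon)$. Letting $\epsilon\to 0$ produces the desired inequality $f^*F(x,\xi_x)\le c\,K^k_X(x,\xi_x)$, with a single $c$ valid for all $(x,\xi_x)$.

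The main point to get right is the identification of the reparametrization $w\mapsto\varphi(rw)$ with the scalar multiplication $\xi\mapsto r\xi$ used in Definition \ref{def7.3}; this is what lets one rescaling trade the disc radius $r$ against a factor of $r$ in $F$, so that a \emph{fixed} disc radius (the unit disc) suffices throughout. The only other thing to verify is that the constant produced by Proposition \ref{pro79} is genuinely uniform over all maps in $Hol(\Delta,X)$, independent of the base point and the jet, which is precisely its content, so the same $c$ works for every admissible $\varphi$ as $\epsilon\to 0$.
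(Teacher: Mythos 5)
Your argument is correct and rests on the same two ingredients as the paper's proof: the uniform bound of Proposition \ref{pro79} on a fixed disc, and the degree-one homogeneity of $F$ and of $K^k_X$ under the weighted rescaling of $k$-jets. The paper packages this as a proof by contradiction, normalizing $\xi_n$ so that $K^k_X(x_n,\xi'_n)=1$ and extracting a disc of radius $\tfrac12$, whereas you make the reparametrization $w\mapsto\varphi(rw)$ explicit and argue directly; the substance is the same.
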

\begin{proof}
Suppose the contrast. Then, there are $x_n \in X, \xi_n\in J_k(X)_{x_n}$ such that 
$f^* F(x_n, \xi_n) \ge n\cdot K^k_X(x_n, \xi_n).$
Put 
 $$\xi'_n= \frac{\xi_n}{K^k_X(x_n, \xi_n)}.$$
Then, $K^k_X(x_n,\xi'_n)=1.$ By the definition of $K^k_X$, for each $n$, there exists $\phi_n\in Hol(\Delta_{\frac{1}{2}},X)$ such that $J_k(\phi)(0)=\xi_n.$ Hence,  $F(J_k(f\circ \phi_n)) \ge n$ for all $n.$ This is a contradiction.
\end{proof}
\begin{definition}\label{de711}
Let $\Omega$ be a complex manifold and $X$ be a compact complex manifold. Let $L\rightarrow X$ be a holomorphic line bundle over $X$. Let $E$ be  a  $\mathbb{C}$-vector subspace of $H^{0}(X, L)$ of dimension $m+1.$ Let $F_m$ be a $m$-jet metric on $J_m(X).$ Let $f$ be a holomophic mapping of $\Omega$ into  $X$. Assume that $K^m_{\Omega}(p,\xi_p)>0 $ for all $ p\in \Omega, \xi_p\in J_m(\Omega)_p-\{0_p\}.$  For $p\in \Omega$, we put
\begin{align*}\
|df(p)|_{F_m} &= \sup \Big\{ \frac{f^* F_m(p, \xi_p)}{K^m_{\Omega}(p,\xi_p)}: \xi_p \in J_m(\Omega)_p -\{0_p\}\Big\}\\
                      &=\sup \Big\{ f^* F_m\big(p, \xi_p): K^m_{\Omega}(p,\xi_p)=1\Big\}.
\end{align*}          
\end{definition}

We now prove the main theorem of this subsection.
\begin{theorem} \ Let $\Omega$ be a hyperbolic complex manifold  and $X$ be a compact complex manifold. Let $L\rightarrow X$ be a holomorphic line bundle over $X$. Fix a positive integer $d$.  Take positive divisors $d_1, d_2,\cdots, d_q$  of $d$. Assume that $\sigma_j \in H^{0}(X, L^{d_j}), D_j=\{\sigma_j=0\}\ (j=1,2,\cdots,q)$ and  $D:=\cup_{j=1}^q D_j.$ Let $E$ be  the  $\mathbb{C}$-vector subspace of $H^{0}(X, L^{d})$ generated by       $\sigma_{1}^{\frac{d}{d_1}},\cdots,\sigma_{q}^{\frac{d}{d_q}}$. 
Put  $u=\rank E, \dim E=m+1.$ 
Denote by $R_j$ the zero divisors of $\sigma_j \ (j=1,2,\cdots,q).$  Assume that $R_1,\cdots,R_q$ are 
in $N$-subgeneral position in $X$, $B(E)=\varnothing$ and $q> (m+1)^2 K(E,N,\{D_j\}).$
Let $\mathcal{F} \subset Hol(\Omega,X)$ be given. Then $\mathcal{F}$ is an uniformly normal
family
if and only if the following two conditions hold

{\rm (i)} $\sup \Big\{ |df(p)|_{F_m} : p \in \bigcup\limits_{\mathcal{F}}
f^{-1}(D), f \in \mathcal{F} \Big\} < \infty$, and

{\rm (ii)} Each Brody limit $g$ for $\mathcal{F}$ such that 
$g(\mathbb{C})\subset \supp(\nu_{\sigma})$ for some $\sigma \in E\setminus \{0\},$ is constant. 
\end{theorem}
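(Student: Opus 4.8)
The plan is to prove the two implications separately, with the nontrivial content concentrated in the sufficiency of (i) and (ii). The engine for both directions is the reparametrization characterization of uniform normality from \cite{JK}: a family $\mathcal{F}\subset Hol(\Omega,X)$ into a compact $X$ fails to be uniformly normal precisely when some Brody sequence $h_n=f_n\circ g_n$ ($f_n\in\mathcal{F}$, $g_n\in Hol(\Delta_n,\Omega)$) admits a subsequence converging, uniformly on compacta of $\mathbb{C}$, to a nonconstant Brody limit $g:\mathbb{C}\to X$. Since $X$ is compact we have $X^+=X$, so no escape to $\infty$ occurs and every Brody limit is an honest holomorphic map $\mathbb{C}\to X$. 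Throughout I would fix the $m$-jet metric $F_m$ on $J_m(X)$ furnished by Proposition \ref{pro74}, and recall that $K^m_\Omega>0$ on nonzero jets because $\Omega$ is hyperbolic (Corollary \ref{co78}), so the quantity $|df(p)|_{F_m}$ of Definition \ref{de711} is well defined.

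For the necessity, assume $\mathcal{F}$ is uniformly normal. Condition (i) follows from the family version of Corollary \ref{co710}: relative compactness of $\mathcal{F}\circ Hol(\Delta_r,\Omega)$ makes $F_m\circ J_m(f\circ\phi)$ uniformly bounded over $f\in\mathcal{F}$ and $\phi\in Hol(\Delta_r,\Omega)$ (this is Proposition \ref{pro79} carried out with sequences drawn from $\mathcal{F}$), whence $f^*F_m\le c\,K^m_\Omega$ with a single constant $c$, i.e. $\sup\{|df(p)|_{F_m}:p\in\Omega,\ f\in\mathcal{F}\}<\infty$; restricting to $p\in f^{-1}(D)$ gives (i). For (ii) I would prove the stronger fact that uniform normality forces \emph{every} Brody limit to be constant. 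Indeed, if $h_n=f_n\circ g_n\to g$ with $g_n\in Hol(\Delta_n,\Omega)$, then for $\xi=j_m(\mathrm{id})$ at a fixed $z$ the distance-decreasing property Lemma \ref{le75}(i) together with the bound just obtained gives $F_m(J_m h_n(z))=f_n^*F_m(g_n(z),(g_n)_*\xi)\le c\,K^m_\Omega(g_n(z),(g_n)_*\xi)\le c\,K^m_{\Delta_n}(z,\xi)\le c/n$ by Lemma \ref{le75}(ii). Letting $n\to\infty$ yields $F_m(J_m g(z))=0$ for all $z$, so $g$ is constant and (ii) holds a fortiori.

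For the sufficiency I argue by contraposition: assuming (i) and (ii) but that $\mathcal{F}$ is not uniformly normal, the engine produces a nonconstant Brody limit $g=\lim f_n\circ g_n$, where (after an ordinary one-jet Brody reparametrization) each $g_n$ may be taken of the shrinking-disc form $g_n=\phi_n\circ\psi_n$ with $\psi_n'\to0$. The key step is to convert the jet bound (i), valid only along $f_n^{-1}(D)$, into a contact estimate for $g$. Fix $w_0\in g^{-1}(D_j)$ in the nondegenerate situation $g(\mathbb{C})\not\subset\supp\nu_{\sigma_j}$, so that $g^{-1}(D_j)$ is discrete. By Hurwitz there are zeros $w_n\to w_0$ of $\sigma_j\circ h_n$, and at these points $h_n(w_n)\in D$, so (i) applies to $f_n$ at $g_n(w_n)$; combining $|df_n(g_n(w_n))|_{F_m}\le C$ with Lemma \ref{le75} exactly as above gives $F_m(J_m h_n(w_n))\le C/n\to0$. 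Since $F_m$ dominates the first $m$ derivatives, this forces $\sigma_j\circ g$ and all its derivatives up to order $m$ to vanish at $w_0$; hence every point of $g^{-1}(D_j)$ has intersection multiplicity at least $m+1$, and therefore
\[
N^{[m]}_g(r,R_j)\le \tfrac{m}{m+1}\,N_g(r,R_j).
\]

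With the contact estimate in hand the proof closes quickly. If $g(\mathbb{C})\subset\supp\nu_\sigma$ for some $\sigma\in E\setminus\{0\}$, then (ii) makes $g$ constant, a contradiction. Otherwise $g$ is analytically nondegenerate with respect to $E$; since $B(E)=\varnothing$ the hypothesis $g(\mathbb{C})\cap B(E)=\varnothing$ is automatic, so Theorem A applies to $g:\mathbb{C}\to X$ and, using the first main theorem (Theorem \ref{thm32}) in the form $\tfrac1{d_j}N_g(r,R_j)\le T_g(r,L)+O(1)$ together with the contact estimate, gives
\[
(q-(m+1)K(E,N,\{D_j\}))\,T_g(r,L)\le \sum_{j=1}^q\tfrac{1}{d_j}N^{[m]}_g(r,R_j)\le \tfrac{m}{m+1}\,q\,T_g(r,L)+O(1).
\]
Because $g$ is nonconstant and nondegenerate, Remark \ref{re50} gives $T_g(r,L)\to\infty$, so dividing by $T_g$ and letting $r\to\infty$ yields $q\le(m+1)^2K(E,N,\{D_j\})$, contradicting the hypothesis $q>(m+1)^2K(E,N,\{D_j\})$. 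Hence $\mathcal{F}$ is uniformly normal. I expect the genuine difficulty to be the contact step above: making rigorous that the $m$-jet bound (i), transported through the reparametrization via the distance-decreasing property, forces the vanishing of the first $m$ derivatives of $\sigma_j\circ g$, so that truncation at level $m$ actually loses the factor $\tfrac{m}{m+1}$. The value-near-$D$ versus value-on-$D$ subtlety is handled by the Hurwitz argument, and it is precisely the availability of the higher-jet metric $F_m$ from Proposition \ref{pro74} that makes the level-$m$ truncation the right one and pins down the threshold $(m+1)^2K(E,N,\{D_j\})$.
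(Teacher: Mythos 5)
Your proposal is correct and relies on exactly the same ingredients as the paper's proof: the Joseph--Kwack Brody-limit characterization of uniform normality, Hurwitz's lemma to land on $f_k^{-1}(D)$, the distance-decreasing property of the jet Kobayashi metric (Lemma \ref{le75}), and Theorem A. The only difference is organizational: the paper invokes the Ramification theorem (Corollary \ref{co51}) to produce one point of $g^{-1}(R_{j_0})$ of multiplicity at most $m$ and then contradicts condition (i) by letting $|df_k(p_k)|\to\infty$, whereas you run the contrapositive, using (i) to force multiplicity at least $m+1$ at every point of $g^{-1}(D_j)$ and then contradicting the second main theorem via $N^{[m]}_g\le\frac{m}{m+1}N_g$ --- which is precisely how Corollary \ref{co51} is derived from Theorem A in the first place.
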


\begin{proof} $(\Rightarrow)$\ Suppose that $\mathcal{F}$ is uniformly normal.
The assertion is deduced from Corollary \ref{co710} and results of Joseph - Kwack
(see \cite[Theorem 3.4]{JK}).

$(\Leftarrow)$\  Now, assume that we have the conditions $(i)$ and $(ii).$ Suppose that $\mathcal{F}$ is not uniformly normal. By a result of Joseph - Kwack (see \cite[Theorem 3.4]{JK}), there exists
a nonconstant Brody limit $g \in Hol(\mathbb{C},X)$. This means that there
exist sequences $\{f_k\} \subset \mathcal{F}$ and 
$\{\varphi_k\} \subset Hol(\Delta_k,\Omega)$ such that the sequence
$\{g_k = f_k \circ \varphi_k\}$ converges uniformly to $g$.
By (ii), $g$ is analytically non-degenerate with respect to $E$. By the Ramification theorem and since $q> (m+1)^2 K(E,N,\{D_j\}),$ there exists $1\le j_0 \le q$ such that 
$$g^{*} R_j  \le m\cdot \supp  (g^{*} R_j).$$ 
Take $z_0 \in \supp  (g^{*} R_j).$ Then, since $g^{* }R_j  \le m\cdot \supp  (g^{*} R_j)$, $J_m(g)(z_0) \not =0.$ This implies that $F_m\circ J_k(g)(z_0)=\alpha>0.$
Thus, there exists an open set $U_0$ containing $z_0$, a neighbourhood  $V_0$ of $g(z_0)$ in $X$    such that  $g_k(z),g(z)\in V_0$ for all $z\in U_0,$ $\sigma_j |_{V_0}:=\sigma_{j0}$ is a holomorphic function on $V_0.$ 
By Hurwitz's Lemma,
there is a sequence $\{z_k\}$ in $\mathbb{C}$ such that $\{z_k\} \to z_0$,
$(\sigma_{j0} \circ g_k) (z_k) = 0$. We have
  $$ \lim_{k \to \infty} F_m\circ J_m(g_k)(z_k) =F_m \circ J_m(g)(z_0)=\alpha > 0.$$
Let $p_k = \varphi_k(z_k)$.
Then
$$ f_k(p_k) = (f_k \circ \varphi_k)(z_k) = g_k(z_k) \in
\sigma_{j0}^{-1}(0) \subset D_{j_0}, $$
and hence, $p_k \in \bigcup\limits_{\mathcal{F}} f^{-1}(D)$ for each $k \geq 1$.

On the other hand, by Definition \ref{de711} and Lemma \ref{le75}, we have
\begin{align*}
|df_k(p_k)| &\ge   \frac{f^* F_m \Big(p_k,j_m(\varphi_k))}{K^m_{\Omega}(p_k,j_m(\varphi_k))}= \frac{F_m\circ J_m(g_k)(z_k)}{K^m_{\Omega}(p_k,j_m(\varphi_k))}\\
& \ge \frac{F_m\circ J_m(g_k)(z_k)}{K^m_{\Delta_k}(z_k,j_m(id))}\ge k\cdot F_m\circ J_m(g_k)(z_k),\\
\end{align*}
so $|df_k(p_k)| \to \infty$. Since 
$\{p_k\} \subset \bigcup\limits_{\mathcal{F}} f^{-1}(D)$,
condition (i) does not hold.  This is a contradiction.
\end{proof}

\end{document}